\documentclass[11pt, reqno]{amsart}
\usepackage{indentfirst, amssymb, amsmath, amsthm, mathrsfs, setspace, indentfirst, enumerate,  mathrsfs, amsmath, amsthm, graphicx, MnSymbol}
\usepackage[bookmarksnumbered, colorlinks, plainpages,]{hyperref}
\usepackage{mathrsfs}
\usepackage{tikz}
\usepackage{float}
\usepackage{booktabs}
\usepackage{graphicx}
\usetikzlibrary{
	positioning,
	calc,
	shadows,
	shadows.blur
}
\usetikzlibrary{arrows.meta}
\usepackage{pgfplots}
\usepackage{placeins}
\usepgfplotslibrary{colormaps}
\usepackage{tabularx, colortbl, xcolor}
\usepackage{array, enumitem}
\pgfplotsset{compat=1.18}
\usepgfplotslibrary{fillbetween}
\usetikzlibrary{3d}
\usepackage{tikz-3dplot}
\usepackage{smartdiagram}
\usesmartdiagramlibrary{additions}
\usepackage[most]{tcolorbox}
\usetikzlibrary{matrix,positioning}
\tdplotsetmaincoords{70}{120}
\newtheorem*{cor A}{Corollary A}
\newtheorem*{cor B}{Corollary B}

\newtheorem{theo}{Theorem}[section]
\newtheorem{lem}{Lemma}[section]

\newtheorem{note}{Note}[section]

\newtheorem{defi}{Definition}[section]

\newcommand{\ol}{\overline}
\newcommand{\be}{\begin{equation}}
	\newcommand{\ee}{\end{equation}}
\newcommand{\beas}{\begin{eqnarray*}}
	\newcommand{\eeas}{\end{eqnarray*}}
\newcommand{\bea}{\begin{eqnarray}}
	\newcommand{\eea}{\end{eqnarray}}

\numberwithin{equation}{section}
\begin{document}
	\title[C\MakeLowercase{oefficient Problems for a......}]{\LARGE C\LARGE\MakeLowercase{oefficient Problems for a \MakeUppercase{M}a--\MakeUppercase{M}inda Convex Class Associated with the Normalized Arcsine Mapping }} %{$\mathcal{C}_{\MakeLowercase{\arcsin}}$}}
	\date{}
	\author[S. P\MakeLowercase{anja}, A. B\MakeLowercase{anerjee}, J. B\MakeLowercase{anerjee} \MakeLowercase{and} S. M\MakeLowercase{ajumder}]{S\MakeLowercase{hantanu} p\MakeLowercase{anja}$^1$$^*$, A\MakeLowercase{bhijit} B\MakeLowercase{anerjee}$^2$, J\MakeLowercase{hilik} B\MakeLowercase{anerjee$^3$ and} S\MakeLowercase{ujoy} M\MakeLowercase{ajumder}$^4$}
	\address{$^{1}$ Department of Mathematics, University of Kalyani, West Bengal 741235, India.}
	\email{panjasantu07@gmail.com, shantanumath26@klyuniv.ac.in}
	
	\address{$^{2}$ Department of Mathematics, University of Kalyani, West Bengal 741235, India.}
	\email{abanerjee\_kal@yahoo.co.in, abanerjeekal@gmail.com}
	
	\address{$^{3}$ Department of Mathematics, University of Kalyani, West Bengal 741235, India.}
	\email{jhilikbanerjee38@gmail.com}

	\address{$^{4}$ Department of Mathematics, Raiganj University, Raiganj, West Bengal-733134, India.}
	\email{sm05math@gmail.com, sjm@raiganjuniversity.ac.in}
	
	\renewcommand{\thefootnote}{}
	\footnote{2020 \emph{Mathematics Subject Classification}: 30C45, 30C50, 30C55}
	\footnote{\emph{Key words and phrases}:Univalent functions, Convex function, Logarithmic coefficients, Coefficient difference, Hankel determinant, Zalcman conjecture, Feketo--Sezeg\"{o} functional.}
	\footnote{*\emph{Corresponding Author}: Shantanu Panja.}
	\renewcommand{\thefootnote}{\arabic{footnote}}
	\setcounter{footnote}{0}

\begin{abstract}
	
	Let $\mathcal{C}_{\arcsin}$ denote the Ma--Minda subclass of convex functions generated by the normalized arcsine mapping $\varphi(z)=1+\frac{2}{\pi}\arcsin z.$
	For this family, we develop a unified coefficient analysis based on subordination techniques, Carath\'eodory functions and sharp estimates for Schwarz functions. As consequences, we derive sharp estimates for the initial Taylor coefficients, logarithmic coefficients and certain differences involving the logarithmic and inverse logarithmic coefficients. We further determine the exact bounds for the second Hankel determinant $H_{2,2}(f)$ together with the Hankel determinants $H_{2,1}(F_f/2)$ and $H_{2,1}(F_{f^{-1}}/2)$ associated with the logarithmic coefficients of a function and its inverse. Moreover, sharp estimates are obtained for the initial generalized Zalcman functional and the generalized Fekete--Szeg\"o functional. In every case, the corresponding extremal functions are identified, showing that all of the obtained inequalities are best possible.
	
\end{abstract}
	
	\thanks{Typeset by \AmS -\LaTeX}
	\maketitle
	
\section{Introduction}

Convexity is one of the fundamental geometric properties in the theory of univalent functions. Besides its intrinsic geometric significance, convexity plays an essential role in the study of coefficient estimates, growth problems, logarithmic coefficients, inverse functions and extremal problems. During the last three decades, differential subordination has provided a unified framework for constructing numerous subclasses of convex functions associated with suitable analytic mappings.

Let $\mathcal{A}$ denote the family of analytic functions in the open unit disk $\Omega=\{z\in\mathbb C:|z|<1\},$ normalized by
\begin{equation}\label{e-1.1}
	f(z)=z+\sum_{n=2}^{\infty}a_n z^n.
\end{equation}
The subclass of univalent functions in $\mathcal A$ is denoted by $\mathcal S$. A function $f\in\mathcal S$ is said to be convex if $f(\Omega)$ is a convex domain. The class of convex functions is characterized analytically by
\[
\mathcal C = \left\{f\in\mathcal S: \Re\!\left(1+\frac{zf''(z)}{f'(z)}\right)>0,\ z\in\Omega \right\}.
\]

We consider $\mathcal{B}_0$ be the class of functions $\omega$ which are analytic in $\Omega$ and satisfies $\omega(0)=0$ and $|\omega(z)|\le 1$. Every function $\omega\in \mathcal{B}_0$ have a Taylor series expansion $\omega(z)=\sum_{n=1}^{\infty}b_nz^n$ for $z\in\Omega$. Also $\mathcal{B}_0$ is called class of Schwarz functions.

\medskip
Let $f$ and $g$ be two analytic functions in $\Omega$. The function $f$ is subordinate to $g$, if there exists a Schwarz function $\omega(z)$ such that $f(z)=g(\omega(z))$ for all $z\in \Omega$ and this is denoted \cite{Lowner_annalen_1923} by $f\prec g$. The concept of subordination used by Ma-Minda \cite{Ma+Minda_1992} in 1992 introduced a broader subclass of $S^*$ and defined by
\[
\mathcal{C}(\psi):=\left\{f\in\mathcal{S}:1+\frac{zf''(z)}{f'(z)}\prec\varphi(z),\;z\in{\Omega} \right\},
\]
where $\varphi$ is an analytic univalent function in $\Omega$ with positive real part, symmetric with respect to the real axis, normalized by $\varphi(0)=1$ and $\varphi'(0)>0$.

\smallskip 
Subordination has become a fundamental tool in geometric function theory for defining and studying various subclasses of analytic and univalent functions. In particular Janowski \cite{Janowski_Anpolon_1970} introduced $\varphi(z)=\frac{1+Az}{1+Bz}$, where $-1\le B< A\le 1$, the classes $S^*(\varphi)$ and $\mathcal{C}(\varphi)$ reduce to the classes $S^{*}[A, B]$ and $\mathcal{C}[A, B]$ respectively. Mendiratta et al. \cite{Mwndiratta et al_BMMS_2015} studied $\varphi(z)=e^z$ with the class $\mathcal{S}^*_e$ and $\mathcal{C}_e$. In 2021 Alimohammadi et al. \cite{DEA_JFT} introduced the convex class $\mathscr{C}(1+\alpha z)$, $0< \alpha\le1$. In 2022 Arora-Kumar \cite{Arora_BKMS_2022} introduced the function $\varphi(z)=1+\operatorname{arcsinh}z$ with the class $S_\rho^*$. Several authors have investigated interesting subclasses of starlike and convex functions \cite{Ronning_AUMC_1991, Kumar_AMP_2021, Kumar_Yadav_IJS_2026, cho, Bano, Alotabi et. al.}.

Despite extensive studies on logarithmic coefficients for starlike classes, the corresponding theory for Ma-Minda convex classes, particularly those associated with transcendental mappings, remains less developed. In particular, no sharp estimates for logarithmic coefficients and their inverse analogues have been established for convex functions defined by the normalized arcsine mapping. This paper fills this gap by providing complete sharp estimates for the class $\mathcal{C}_{\arcsin}$.

The normalized arcsine mapping possesses several geometric features that make it particularly suitable in the Ma--Minda framework. Its image domain is symmetric with respect to the real axis and convex in the horizontal direction (a domain is called convex in the horizontal direction if every horizontal line intersects it in a possibly empty interval), while the mapping itself admits a simple Taylor expansion that allows explicit coefficient calculations. These properties make the associated convex class a natural setting for investigating logarithmic coefficients and their inverse analogues.

\smallskip
Motivated by recent studies on Ma--Minda subclasses associated with special functions, we consider the function $\varphi(z)=1+\frac{2}{\pi}\arcsin z$ for $z\in\Omega$.  

\begin{defi}
	Motivated by the geometric behaviour of the normalized arcsine mapping, we introduce the following Ma--Minda convex subclass.
	\[
	\mathcal{C}_{\arcsin}=\left\{f\in\mathcal S:1+\frac{zf''(z)}{f'(z)}\prec 1+\frac{2}{\pi}\arcsin z, \ \forall z\in\Omega\right\},
	\]
	which represents the subclass of convex functions associated with the leaf-shaped domain shown in Figure~\ref{fig:leaf-domain}.
\end{defi}

\begin{figure}[H]
	\centering
	\includegraphics[width=12cm]{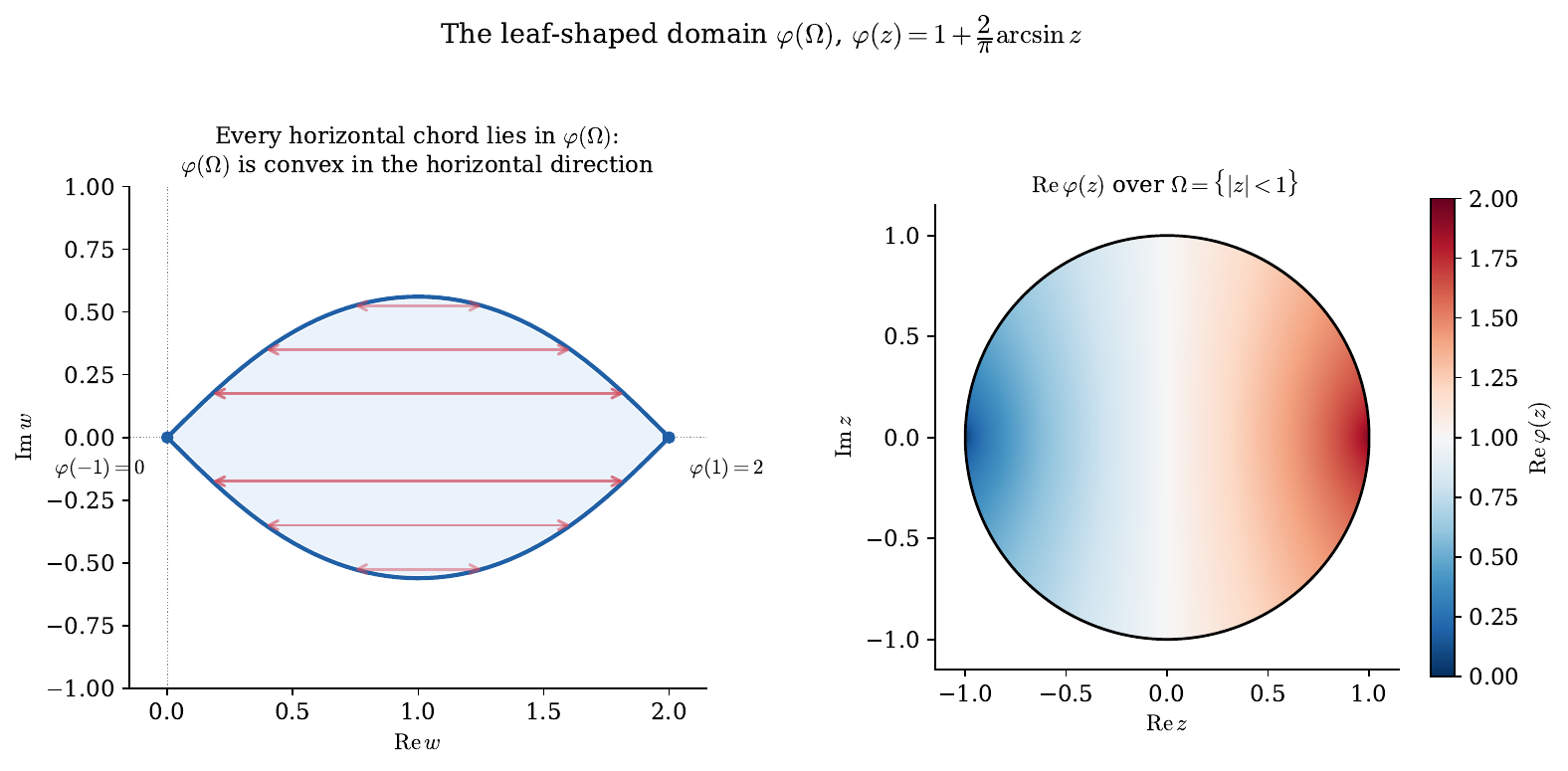}
	\caption{Image of the unit disc $\Omega$ under the normalized mapping $\varphi(z)=1+\dfrac{2}{\pi}\arcsin z$. The boundary $\{\varphi(e^{it}):-\pi\le t\le\pi\}$ forms a leaf-shaped domain, which serves as the defining target region for the class $\mathcal C_{\arcsin}$.}
	\label{fig:leaf-domain}
\end{figure}

\begin{table}[H]
	\centering
	\caption{Ma--Minda convex classes $\mathcal C(\varphi)$ associated with special mapping functions $\varphi$.}
	\label{tab:related-classes}
	\begin{tabular}{@{}lll@{}}
		\toprule
		Author(s) & $\varphi(z)$ & Class \\
		\midrule
		Janowski (1970) & $\dfrac{1+Az}{1+Bz}$ & $\mathcal C[A,B]$ \\
		Mendiratta et al. (2015) & $e^{z}$ & $\mathcal C_{e}$ \\
		Alimohammadi et al. (2021) & $1+\alpha z,\ 0<\alpha\le 1$ & $\mathscr C(1+\alpha z)$ \\
		Arora--Kumar (2022) & $1+\operatorname{arcsinh}z$ & $S^{*}_{\rho}$ (starlike) \\
		\textbf{Present paper} & $1+\dfrac{2}{\pi}\arcsin z$ & $\mathcal C_{\arcsin}$ \\
		\bottomrule
	\end{tabular}
\end{table}
\noindent{\bf\underline{Logarithmic and inverse logarithmic coefficients.}}
Let $f\in\mathcal{S}$. Two important coefficient sequences associated with $f$ are the logarithmic coefficients $\{\gamma_n\}$ and the inverse logarithmic coefficients $\{\Gamma_n\}$. The logarithmic coefficients are defined by
\bea\label{e1.2}
F_f(z)=\log\!\left(\frac{f(z)}{z}\right)
=2\sum_{n=1}^{\infty}\gamma_n(f)z^n,
\qquad z\in\Omega\setminus\{0\},\quad \log 1=0.\eea
Now from \eqref{e1.2} the inverse logarithmic coefficients are defined analogously through the inverse function $f^{-1}$ and coefficients $\{\Gamma_n\}$. The definitions together with the first few coefficient identities are summarized in Figure~\ref{fig:logcoeff}.

\begin{figure}[H]
	\centering
	
	\begin{minipage}[t]{0.485\textwidth}
		\begin{tcolorbox}[
			height=6.0cm,
			colback=blue!2,
			colframe=blue!50!black,
			title=\bfseries\;\;\;\;\; Logarithmic coefficients,
			boxrule=0.7pt,
			arc=2mm,
			left=2mm,right=2mm,top=1mm,bottom=1mm]
			
			\[ F_f(z)=
			\log\!\left(\frac{f(z)}{z}\right)
			=
			2\sum_{n=1}^{\infty}\gamma_nz^n
			\]
			\noindent\makebox[\linewidth]{\rule{\linewidth}{0.4pt}}
			Coefficient comparison gives			
			
			\be
				\left\{
				\begin{aligned} 
					\label{e1.3}&\gamma_1=\frac{1}{2}a_2,\\ &\gamma_2=\frac{1}{2}\left(a_3-\frac{1}{2}a_2^2\right),\\ &\gamma_3=\frac{1}{2}\left(a_4-a_2a_3+\frac{1}{3}a_2^3\right).
					%,\\
				\end{aligned}
				\right.
			\ee

		\end{tcolorbox}
	\end{minipage}
	\hfill
	\begin{minipage}[t]{0.485\textwidth}
		\begin{tcolorbox}[
			height=6.0cm,
			colback=red!2,
			colframe=red!60!black,
			title=\bfseries \;\;\;\;\;Inverse logarithmic coefficients,
			boxrule=0.7pt,
			arc=2mm,
			left=2mm,right=2mm,top=1mm,bottom=1mm]
			
			\[F_{f^{-1}}(z)=
			\log\!\left(\frac{f^{-1}(z)}{z}\right)
			=
			2\sum_{n=1}^{\infty}\Gamma_nz^n
			\]
		\noindent\makebox[\linewidth]{\rule{0.99\linewidth}{0.4pt}}
			\;\;\;\;\ Coefficient comparison gives
			\begin{equation}
				\left\{
				\begin{aligned} 
					\label{e1.4}&\Gamma_1=-\frac{1}{2}a_2,\\ &\Gamma_2=-\frac{1}{2}\left(a_3-\frac{3}{2}a_2^2\right),\\ &\Gamma_3=-\frac{1}{2}\left(a_4-4a_2a_3+\frac{10}{3}a_2^3\right).
					%\\
					%&\Gamma_4=-\frac{1}{2}\left(a_5-5a_2a_4+15a_2^2a_3-\frac{5}{2}a_3^2-\frac{35}{4}a_2^4\right).
				\end{aligned}
				\right.
			\end{equation}
			
		\end{tcolorbox}
	\end{minipage}
	\vspace{.5cc}
	\caption{Definitions and initial coefficients of the logarithmic and inverse logarithmic coefficients.}
	
	\label{fig:logcoeff}
\end{figure}
The inverse logarithmic coefficients were introduced by
Ponnusamy \emph{et al.}~\cite{PSW_RM}, who proved the sharp estimate $\Gamma_n|\le\frac1{2n}\binom{2n}{n},$
with equality only for the Koebe function and its rotations.

The study of logarithmic coefficients has become an active area of geometric function theory. Sharp estimates have been established for various subclasses of $\mathcal S$
(see \cite{Ali_Vasudevarao_PAMS_2018, 	Ali_Allu_BAMS,
	CKKLS_	RACSAM_2020,	Zaprawa_BSMM_2021,	Thomas_PAMS_2016,
	Krgar_JA_2019,	Ponnusamy_BSM_2021,	Ponnusamy_JAMS_2020}).
\subsection*{\underline{Hankel determinants}}

For a function $f\in\mathcal A$ given by \eqref{e-1.1},
Pommerenke \cite{Pom_1967} introduced the Hankel determinant,
which provides higher-order coefficient invariants for univalent
functions. Besides the classical coefficients $\{a_n\}$,
analogous Hankel determinants can also be defined for the logarithmic
coefficients $\{\gamma_n\}$ of $f$ and
$\{\Gamma_n\}$ of its inverse $f^{-1}$.
\begin{align}
	H_q(n)
	&=
	\begin{vmatrix}
		a_n&a_{n+1}&\cdots&a_{n+q-1}\\
		a_{n+1}&a_{n+2}&\cdots&a_{n+q}\\
		\vdots&\vdots&\ddots&\vdots\\
		a_{n+q-1}&a_{n+q}&\cdots&a_{n+2q-2}
	\end{vmatrix},
	\label{e1.5}
	\\[3mm]
	H_{q,n}(F_f/2)
	&=
	\begin{vmatrix}
		\gamma_n&\gamma_{n+1}&\cdots&\gamma_{n+q-1}\\
		\gamma_{n+1}&\gamma_{n+2}&\cdots&\gamma_{n+q}\\
		\vdots&\vdots&\ddots&\vdots\\
		\gamma_{n+q-1}&\gamma_{n+q}&\cdots&\gamma_{n+2q-2}
	\end{vmatrix},
	\label{e1.6}
	\\[3mm]
	H_{q,n}(F_{f^{-1}}/2)
	&=
	\begin{vmatrix}
		\Gamma_n&\Gamma_{n+1}&\cdots&\Gamma_{n+q-1}\\
		\Gamma_{n+1}&\Gamma_{n+2}&\cdots&\Gamma_{n+q}\\
		\vdots&\vdots&\ddots&\vdots\\
		\Gamma_{n+q-1}&\Gamma_{n+q}&\cdots&\Gamma_{n+2q-2}
	\end{vmatrix}.
	\label{e1.7}
\end{align}

The lowest-order cases used throughout this paper are
\[
H_{2,2}(f)=a_2a_4-a_3^2,\qquad
H_{2,1}(F_f/2)=\gamma_1\gamma_3-\gamma_2^2,\qquad
H_{2,1}(F_{f^{-1}}/2)=\Gamma_1\Gamma_3-\Gamma_2^2.
\]

The investigation of Hankel determinants involving logarithmic
coefficients was initiated by Kowalczyk--Lecko
\cite{KL_BAMS_2022, KL_RACSAM_2023}.
Their work opened a fruitful direction in coefficient theory,
leading to numerous subsequent developments
(see
\cite{Allu_Shaji_bams_2025, ASS,
	Sabir_Ali_2025,
	MK_BIMS_2023,
	Eker et al,
	Lecko et al,
	Majumder et al._JA_2026}).
%%%%%%%%%%%%%%%%%%%%%%%%%%%%%%%%%%%%%%%%%%%%%%%%%%%%%%%%%%%%%%%%%%%%
\\
\noindent\textbf{The main contributions of this paper are as follows:}
\begin{itemize}
	\item Sharp bounds for the initial Taylor coefficients $a_2, a_3, a_4$ for functions in $\mathcal{C}_{\arcsin}$ (Theorem \ref{t3.1});
	\item Sharp estimates for logarithmic coefficients $\gamma_1,\gamma_2,\gamma_3,\gamma_4$ (Theorem \ref{t3.2});
	\item Sharp estimates for inverse logarithmic coefficients $\Gamma_1,\Gamma_2,\Gamma_3$ (Theorem \ref{t3.3});
	\item Sharp bounds for the difference $|\gamma_2|-|\gamma_1|$ (Theorem \ref{t3.4});
	\item Sharp bounds for the difference $|\Gamma_2|-|\Gamma_1|$ (Theorem \ref{t3.5}).
	\item Sharp estimate for the second Hankel determinant $H_{2,2}(f)$ (Theorem \ref{t3.6});
	\item Sharp bounds for the Hankel determinant $H_{2,1}(F_f/2)$ involving logarithmic coefficients (Theorem \ref{t3.7});
	\item Sharp bounds for the Hankel determinant $H_{2,1}(F_{f^{-1}}/2)$ involving inverse logarithmic coefficients (Theorem \ref{t3.8});
	\item Sharp bounds for the generalized Zalcman functional $\mid a_2a_3-a_4\mid$ (Theorem \ref{t3.9}).
	\item Sharp bounds of generalized Fekete--Szeg\"{o} functional (Theorem \ref{t3.10})
	
\end{itemize}
All estimates are sharp, and extremal functions are explicitly identified in each case.

%The paper is organized as follows: In Section 2 we introduced some necessary lemmas. Section 3 contains main results and its divided subsections. In Subsection 3.1 we established sharp bounds of initial Taylor's coefficients and logarithmic coefficients of functions and inverse logarithmic coefficients. In Subsection 3.2 we establish sharp bounds of coefficient differences. Subsection 3.3 studied the sharp bounds of the Hankel determinants. In subsection 3.4 we find sharp bounds of the initial generalized Zalcman conjecture, and in subsection 3.5 we find the sharp bound of the Fekete-Szeg\"{o} functional. 
\section{\bf{Lemmas}}
		Let $\mathcal{P}$ be the class of all analytic functions in the unit disk $\Omega$ such that $p(0)=1$ and $\Re(p(z))>0$ for all $z\in \Omega$. Every $p\in \mathcal{P}$ then has the series representation 
		\bea\label{e2.1} p(z)=1+\sum_{n=1}^{\infty} p_nz^n, \ \ z\in\Omega. \eea
		Function in $\mathcal{P}$ are referred to as Carath\'{e}odory functions. It will known that for $p\in\mathcal{P}$, the coefficients $\left(\text{see \cite{PL_D_1983}}\right)$ satisfy the sharp bound $\mid p_n\mid\leq 2$ for all $n\ge 1$. The Carath\'{e}odory class $\mathcal{P}$ and its coefficient bounds play a fundamental  role in deriving sharp estimates in geometric function theory.
  \begin{lem}\label{l2.1}\cite{Ma+Minda_1992}
		If $h\in\mathcal{P}$ be given by \emph{(\ref{e2.1})}, then
		\[
		|p_2-{v} p_1^2| \le 
		\begin{cases}
			-4{v}+2 & \text{if } {v}<0, \\
			2 & \text{if } 0\le{v}\le 1,\\
			4{v}-2 & \text{if } {v}>1.
		\end{cases}
		\]
		Moreover, for ${v}<0$ or ${v}>1$ equality holds if and only if $h(z)=\frac{1+z}{1-z}$ one of its rotations. \\Also, for $0<{v}<1$ equality holds if and only if $h(z)=\frac{1+z^2}{1-z^2}$ or one of its rotations. 
	\end{lem}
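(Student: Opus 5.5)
A natural route is to pass to the Schwarz function attached to $h$ and reduce everything to the classical coefficient region of Schwarz functions. Write $h=\frac{1+\omega}{1-\omega}$ with $\omega\in\mathcal B_0$, $\omega(z)=b_1z+b_2z^2+\cdots$. Comparing coefficients gives
\[
p_1=2b_1,\qquad p_2=2(b_2+b_1^2),
\]
so
\[
p_2-vp_1^2=2\bigl(b_2+(1-2v)b_1^2\bigr).
\]
We shall then use the standard facts $|b_1|\le1$ and $|b_2|\le 1-|b_1|^2$; the latter follows from Schwarz--Pick applied to $\omega(z)/z$.

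The bound then follows by a one-variable maximization. With $x=|b_1|\in[0,1]$,
\[
|p_2-vp_1^2|\le 2\bigl(1-x^2+|1-2v|x^2\bigr)=2\bigl(1+(|1-2v|-1)x^2\bigr).
\]
The case split is as follows.
\begin{itemize}
\item If $0\le v\le1$, then $|1-2v|\le1$, so the coefficient of $x^2$ is non-positive. The maximum is at $x=0$, giving the bound $2$.
\item If $v<0$, then $|1-2v|-1=-2v>0$. The maximum is at $x=1$, giving $2(1-2v)=2-4v$.
\item If $v>1$, then $|1-2v|-1=2v-2$. At $x=1$ this gives $4v-2$.
\end{itemize}

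For the equality statements, first take $v<0$ or $v>1$. The coefficient of $x^2$ is strictly positive, so equality forces $|b_1|=1$. By Schwarz's lemma $\omega(z)=e^{i\theta}z$, hence $h$ is a rotation of $\frac{1+z}{1-z}$, and this function clearly attains the bound.

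Now take $0<v<1$. The coefficient $|1-2v|-1$ is strictly negative, so equality forces $b_1=0$ and $|b_2|=1$. Then $\omega(z)/z$ has modulus $1$ somewhere in the disc, so $\omega(z)=e^{i\theta}z^2$ and $h$ is a rotation of $\frac{1+z^2}{1-z^2}$. Conversely, this function gives $p_2=2$ and $p_1=0$, so the bound is attained.

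The main obstacle is the bookkeeping in this equality analysis. In particular, the endpoint cases $v=0,1$ must be excluded, because there more extremals exist; this is consistent with the statement listing only the open interval.
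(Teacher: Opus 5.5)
Your argument is correct and is essentially the standard Ma--Minda proof that the paper cites; the paper gives no proof of its own. That proof writes $h=(1+\omega)/(1-\omega)$, uses $|b_2|\le 1-|b_1|^2$, maximizes $2\bigl(1+(|1-2v|-1)x^2\bigr)$ over $x\in[0,1]$, and reads the equality cases off the sign of $|1-2v|-1$. One small slip: for $0<v<1$, $\omega(z)/z$ vanishes at the origin, so the correct justification is that $\omega(z)/z^2$ is bounded by $1$ and equals $b_2$, of modulus $1$, at $0$ (equivalently, use the equality case of Schwarz's lemma for $\omega(z)/z$), which forces $\omega(z)=e^{i\theta}z^2$.
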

	\begin{lem}\label{l2.2}\cite{Ali_BMSS_2003}
		If $h\in\mathcal{P}$ be given by \emph{(\ref{e2.1})} with $0\le B\le 1$ and $2B(2B-1)\le D\le B$. Then 
		\beas |p_3-2Bp_1p_2+Dp_1^3|\le2.\eeas.
	\end{lem}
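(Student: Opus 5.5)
The statement is Lemma \ref{l2.2}: for $h\in\mathcal P$, $0\le B\le1$ and $2B(2B-1)\le D\le B$ one has $|p_3-2Bp_1p_2+Dp_1^3|\le 2$. I would reduce it to a statement about the first three Taylor coefficients of a Schwarz function.

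\textbf{Step 1: pass to the Schwarz function.} Write $h=(1+\omega)/(1-\omega)$ with $\omega(z)=\sum_{n\ge1} c_nz^n\in\mathcal B_0$. Expanding gives
\[
p_1=2c_1,\qquad p_2=2(c_2+c_1^2),\qquad p_3=2(c_3+2c_1c_2+c_1^3).
\]
Substituting,
\[
p_3-2Bp_1p_2+Dp_1^3=2\bigl[c_3+2(1-2B)c_1c_2+(1-4B+4D)c_1^3\bigr].
\]
So the claim is equivalent to $|c_3+\mu c_1c_2+\nu c_1^3|\le1$, where $\mu=2(1-2B)$ and $\nu=1-4B+4D$.

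\textbf{Step 2: check the parameter range.} The hypotheses translate as follows:
\begin{itemize}
\item $0\le B\le1$ gives $\mu\in[-2,2]$, i.e.\ $|\mu|\le 2$.
\item $D\ge 2B(2B-1)$ gives $\nu\ge 1-4B+8B^2-4B=(1-2B)^2\cdot 2-1$, that is, $\nu\ge \tfrac{\mu^2}{2}-1$.
\item $D\le B$ gives $\nu\le 1$.
\end{itemize}
Thus $(\mu,\nu)$ lies in the region $|\mu|\le2$, $\tfrac{\mu^2}{2}-1\le\nu\le1$.

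\textbf{Step 3: bound the Schwarz functional.} Use the Schur-type parametrization
\[
c_2=(1-|c_1|^2)x,\qquad c_3=(1-|c_1|^2)(1-|x|^2)y-(1-|c_1|^2)\bar c_1x^2,
\]
with $|x|,|y|\le1$. By rotation we may assume $c_1=c\in[0,1]$. The triangle inequality in $y$ then gives
\[
|\cdot|\le (1-c^2)\bigl[1-|x|^2+|\mu c x - c x^2|\bigr]+\dots
\]
More cleanly, group the expression as
\[
(1-c^2)(1-|x|^2)y+(1-c^2)c\,x(\mu-x)+\nu c^3 .
\]
Writing $x=re^{i\theta}$, one must show
\[
(1-c^2)(1-r^2)+\bigl|(1-c^2)c\,r e^{i\theta}(\mu-re^{i\theta})+\nu c^3\bigr|\le1 .
\]
I expect this to be the main obstacle: it is a three-variable maximization in $(c,r,\theta)$, and it only closes on exactly the region found in Step 2.

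\textbf{Step 4: how to attack the maximization.} I would first try the cheap bound $|\mu-x|\le$ something, but that is too crude. Instead I would follow the Prokhorov--Szynal approach: the region $\{|\mu|\le2,\ \mu^2/2-1\le\nu\le1\}$ is contained in their set $D_1\cup D_2$, on which they prove $\max|c_3+\mu c_1c_2+\nu c_1^3|=1$.

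If self-containment is preferred, the direct route is:
\begin{enumerate}
\item Observe that the expression is convex (affine) in $\nu$, so it suffices to check the endpoints $\nu=1$ and $\nu=\mu^2/2-1$.
\item At $\nu=1$, the function $\omega(z)=z$ attains equality. Verify this endpoint directly via the parametrization.
\item At $\nu=\mu^2/2-1$, treat $\theta$ by maximizing over the modulus and reduce to a one-variable inequality in $c$.
\end{enumerate}
Equality is attained by $h(z)=(1+z^3)/(1-z^3)$, which gives $p_3=2$ and $p_1=p_2=0$.
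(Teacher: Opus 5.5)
Step~2 drops a factor of $2$, and the statement you end up proving is not the one given. Since $4\cdot 2B(2B-1)=16B^2-8B$, the hypothesis $D\ge 2B(2B-1)$ translates to $\nu\ge 16B^2-12B+1=\mu^2-\mu-1$, not to $\nu\ge\mu^2/2-1$. Your expression $1-4B+8B^2-4B$ is what one gets from $D\ge B(2B-1)$. (The paper gives no proof here; it only cites Ali (2003), and your Step~1 reduction is correct.)

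With the correct translation, the region is not contained in the Prokhorov--Szynal set $D_1\cup D_2$: at $\mu=\tfrac12$ it reaches $\nu=-\tfrac54<-1$. This cannot be repaired, because the lemma as printed is false. Take $B=\tfrac38$ and $D=2B(2B-1)=-\tfrac3{16}$, so that $D\le B$ holds, and $h(z)=(1+z)/(1-z)$, so that $p_1=p_2=p_3=2$. Then $|p_3-2Bp_1p_2+Dp_1^3|=|2-3-\tfrac32|=\tfrac52>2$. More generally, testing with $(1+z)/(1-z)$ forces $D\ge B-\tfrac12$, while $2B(2B-1)<B-\tfrac12$ for every $B\in(\tfrac14,\tfrac12)$. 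So no proof of the inequality under the printed hypothesis $2B(2B-1)\le D\le B$ can exist.

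What you have actually set up is the correct version, with hypothesis $B(2B-1)\le D\le B$, which is the condition in Ali's original lemma. For that version your Prokhorov--Szynal route works, but you should verify the inclusion rather than assert it. On $|\mu|\le\tfrac12$ one has $\mu^2/2-1\ge-1$. On $\tfrac12\le|\mu|\le2$, writing $s=|\mu|+1$,
\[
54\Bigl[\tfrac{\mu^2}{2}-1-\tfrac{4}{27}s^3+s\Bigr]=-(s-3)(8s^2-3s-9)\ge0\quad\text{for } s\in[\tfrac32,3].
\]
Your self-contained alternative in Step~4 is only an outline, and the two endpoint cases are the whole difficulty, so rely on the Prokhorov--Szynal lemma instead. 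The paper's applications survive the correction: $(B,D)\approx(0.2613,0.1036)$ in Theorem~\ref{t3.1} and $(B,D)\approx(0.4204,0.1614)$ in Theorem~\ref{t3.9} both satisfy $B(2B-1)\le D\le B$.
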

	%\begin{lem}\label{l2.3}\cite{Ravichandran_Verma_CRMAA_2015}
	%	If $h\in\mathcal{P}$ be given by \emph{(\ref{e2.1})}. If $\beta$, $\gamma$, $\delta$ and $\xi$ satisfy $0<\beta<1$, $0<\xi<1$, and 
	%	\beas 8\xi(1-\xi)\big\{(\beta\gamma-2\delta)^2+(\beta(\xi+\beta)-\gamma)^2\big\}+\beta(1-\beta)(\gamma-2\beta\xi)^2\le 4\beta^2(1-\beta)^2\xi(1-
	%	\xi),\eeas
	%	then \beas\mid\delta p_1^4+\xi p_2^2+2\beta p_1p_3-\frac{3}{2}\gamma p_1^2p_2-p_4 \mid\le 2. \eeas
	%\end{lem}
	\begin{lem}\label{l2.4} \cite[Lemma 2.4]{CKL1} If $h\in\mathcal{P}$ is of the form \emph{(\ref{e2.1})}, then
		\bea
		\label{e2.2}p_1 =2u_1,\eea
		\bea\label{e2.3} p_2=2u_1^2 + 2(1 - u_1^2)u_2\eea
		and
		\bea
		\label{e2.4} p_3 = 2u_1^3+4(1-u_1^2)u_1u_2 - 2(1 - u_1^2)u_1u_2^2 + 2(1 - u_1^2)(1 - |u_2|^2)u_3
		\eea
		for some $u_1, u_2, u_3 \in{\ol \Omega}:= \{z \in \mathbb{C}: |z| \leq 1 \}$.
		
		\medskip
		For $u_1 \in \mathbb{T}:= \{z \in \mathbb{C}: |z| = 1 \}$, there is a unique function $p\in \mathcal{P}$ with $p_1$  as in \emph{(\ref{e2.2})}, namely
		\[p(z)=\frac{1+u_1 z}{1 - u_1 z}, \quad z \in \Omega.\]
		
		\medskip
		For $u_1 \in \Omega$ and $\tau_2 \in \mathbb{T}$, there is a unique function $h\in \mathcal{P}$ with $p_1$ and $p_2$ as in \emph{(\ref{e2.2})} and \emph{(\ref{e2.3})}, namely
		\[p(z) = \frac{1+(\ol u_1 u_2+u_1)z + u_2 z^2}{1+(\ol u_1 u_2 - u_1)z - u_2 z^2}, \quad z \in \Omega.\]

		\medskip
		For $u_1, u_2 \in \Omega$ and $u_3 \in \mathbb{T}$, there is a unique function $p\in\mathcal{P}$ with $p_1$, $p_2$ and $p_3$  as in \emph{(\ref{e2.2})--(\ref{e2.4})}, namely
		\[p(z)=\frac{1 + (\ol u_2 u_3 + \ol u_1 u_2 + u_1)z+(\ol u_1 u_3+u_1\ol u_2\tau_3+u_2)z^2+u_3z^3}{1+(\ol u_2 u_3+\ol u_1u_2-u_1)z+(\ol u_1u_3-u_1\ol u_2u_3-u_2)z^2-u_3z^3},\;\;z\in\Omega.\]
	\end{lem}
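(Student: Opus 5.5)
The plan is to use the Schur algorithm for the Schwarz function attached to $p$. Given $p\in\mathcal P$, I will set $\omega=(p-1)/(p+1)$, which is a Schwarz function in $\mathcal B_0$. Then $p=(1+\omega)/(1-\omega)=1+2\omega+2\omega^2+2\omega^3+\cdots$. Writing $\omega(z)=c_1z+c_2z^2+c_3z^3+\cdots$ and comparing coefficients gives
\[
p_1=2c_1,\qquad p_2=2(c_2+c_1^2),\qquad p_3=2(c_3+2c_1c_2+c_1^3).
\]
The task is therefore to parametrize $(c_1,c_2,c_3)$.

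By Schwarz's lemma, $\omega(z)=z\omega_1(z)$ with $\omega_1$ analytic and $|\omega_1|\le1$; put $u_1=\omega_1(0)$. If $|u_1|<1$, the disc automorphism gives $\omega_1=(u_1+z\omega_2)/(1+\ol u_1 z\omega_2)$ with $|\omega_2|\le1$, and $u_2=\omega_2(0)$. Repeating once more, $\omega_2=(u_2+z\omega_3)/(1+\ol u_2 z\omega_3)$ with $u_3=\omega_3(0)$. I will expand these Möbius expressions to order $z^2$ and obtain
\[
c_1=u_1,\qquad c_2=(1-|u_1|^2)u_2,\qquad c_3=(1-|u_1|^2)\bigl((1-|u_2|^2)u_3-\ol u_1u_2^2\bigr).
\]
Substituting into $p_1,p_2,p_3$ gives (\ref{e2.2})--(\ref{e2.4}), with $1-u_1^2$ understood as $1-|u_1|^2$; this coincides with $1-u_1^2$ when $u_1$ is real, which is the normalization the statement uses. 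For example, $c_3+2c_1c_2+c_1^3$ expands to exactly the combination $u_1^3+2(1-u_1^2)u_1u_2-(1-u_1^2)u_1u_2^2+(1-u_1^2)(1-|u_2|^2)u_3$. The degenerate cases need separate treatment. When $|u_1|=1$ or $|u_2|=1$, the algorithm stops, the later parameters may be chosen arbitrarily in $\ol\Omega$, and the formulas still hold because the factors $1-|u_1|^2$ or $1-|u_2|^2$ vanish.

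For uniqueness I will use the maximum modulus principle. If $u_1\in\mathbb T$, then $\omega_1\equiv u_1$, so $\omega=u_1z$ and $p=(1+u_1z)/(1-u_1z)$. If $u_1\in\Omega$ and $u_2\in\mathbb T$, then $\omega_2\equiv u_2$, so $\omega=z(u_1+u_2z)/(1+\ol u_1u_2z)$. Forming $(1+\omega)/(1-\omega)$ and clearing denominators gives the stated quadratic quotient. If $u_1,u_2\in\Omega$ and $u_3\in\mathbb T$, then $\omega_3\equiv u_3$, and composing the two Möbius steps gives a cubic-over-cubic expression for $p$. Conversely, $p_1,p_2,p_3$ determine $c_1,c_2,c_3$ and hence $u_1,u_2,u_3$ whenever the relevant factors $1-|u_j|^2$ are nonzero, so $p$ is unique in each case.

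The main obstacle is purely algebraic: the expansion of $c_3$ and the simplification of the last extremal function into the displayed cubic quotient. The sign and conjugation pattern has to match exactly, including the factor the statement writes as $\tau_3$, which I read as $u_3$. I would first compute $\omega$ explicitly as the composed Möbius map $z(u_1+z\omega_2)/(1+\ol u_1z\omega_2)$ with $\omega_2=(u_2+u_3z)/(1+\ol u_2u_3z)$, and only then form $(1+\omega)/(1-\omega)$, collecting powers of $z$ in the numerator and denominator.
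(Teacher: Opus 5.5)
Your proof is correct, and you read the statement the right way. In the general complex form the formulas must contain $1-|u_1|^2$ and $\ol u_1u_2^2$. The printed versions, with $1-u_1^2$ and $u_1u_2^2$, are not even consistent with the printed extremal functions (which contain $\ol u_1$) unless $u_1$ is real, which is how the paper uses the lemma. Likewise $\tau_2,\tau_3$ are typos for $u_2,u_3$. Your expansion $c_3=(1-|u_1|^2)\bigl((1-|u_2|^2)u_3-\ol u_1u_2^2\bigr)$ is right. With $\omega_3\equiv u_3$ one gets $\omega_1=\bigl(u_1+(u_1\ol u_2u_3+u_2)z+u_3z^2\bigr)/\bigl(1+(\ol u_2u_3+\ol u_1u_2)z+\ol u_1u_3z^2\bigr)$, and $(1+z\omega_1)/(1-z\omega_1)$ is exactly the displayed cubic quotient.

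The paper gives no proof of this lemma; it is quoted from \cite{CKL1}. The classical derivation in that literature, going back to Libera and Z{\l}otkiewicz for real $p_1$, uses the Carath\'eodory--Toeplitz theorem. Nonnegativity of the Toeplitz determinants built from $2,p_1,p_2,p_3$ confines $p_2$, and then $p_3$, to closed discs whose centres and radii depend on the earlier coefficients. The $u_j$ are the normalized positions in these discs. Uniqueness in the boundary cases comes from the degenerate case of that theorem: a vanishing determinant forces $p$ to be a finite convex combination of half-plane maps $(1+\varepsilon z)/(1-\varepsilon z)$ with $|\varepsilon|=1$.

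Your route instead identifies $u_1,u_2,u_3$ as the first three Schur parameters of $\omega(z)/z$. It needs nothing beyond Schwarz's lemma and the maximum modulus principle. Existence, the degenerate cases and the extremal functions (Cayley transforms of Blaschke products of degree at most three) all come out of one recursion. It also makes transparent why $\ol u_1$ and $|u_1|^2$ appear, and it extends mechanically to $p_4,p_5,\dots$. The determinant approach is computationally heavier. Its advantage is that it is phrased entirely in terms of the coefficient sequence and yields the extremal functions directly as such convex combinations.
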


	\medskip
	Following well-known result is due to Choi et al. \cite{CKS1}.
	\begin{lem}\label{l2.5}\cite{CKS1} Let $A$, $B$, $C$  be real numbers and let
		\[\Psi(A, B, C):= \max\limits_{z\in \ol{{\Omega}}}\left\lbrace |A+Bz+Cz^2|+1-|z|^2\right\rbrace.\]
		
		\begin{enumerate} 
			\item[\emph{(i)}] If $AC\geq 0$, then
			\[\Psi(A, B, C) =
			\begin{cases}
				|A|+|B|+|C|, & \text{if}\;\;\; |B|\geq 2(1-|C|), \\
				1+|A|+\frac{B^2}{4(1-|C|)}, &\text{if}\;\;\; |B|<2(1-|C|).
			\end{cases}
			\]
			\item[\emph{(ii)}] If $AC<0$, then 
			\[\Psi(A,B,C)=
			\begin{cases}
				1-|A|+\frac{B^2}{4(1-|C|)}, &\text{if}\;\;\; -4AC(C^{-2}-1) \leq B^2\; \text{and}\; |B|<2(1-|C|), \\
				1+|A|+\frac{B^2}{4(1+|C|)}, &\text{if}\;\;\; B^2<\min\left\{4(1+|C|)^2, -4AC(C^{-2}-1) \right\}, \\
				R(A,B,C), &\text{otherwise},
			\end{cases}
			\]
			where
			\[R(A,B,C):=
			\begin{cases}
				|A|+|B|-|C|, & \text{if}\;\;\; |C|(|B|+4|A|) \leq |AB|, \\
				-|A|+|B|+|C|, & \text{if}\;\;\; |AB|\leq |C|(|B|-4|A|), \\
				(|C|+|A| )\sqrt{1-\frac{B^2}{4AC}}, &\text{otherwise}.
			\end{cases}
			\]
		\end{enumerate} 
	\end{lem}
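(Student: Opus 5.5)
Since this is the cited result of Choi et al.\ \cite{CKS1}, I would reprove it by a direct two-variable maximization. The plan is to write $z=re^{i\theta}$ with $0\le r\le 1$, maximize first over $\theta$ for fixed $r$, and then over $r$.

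\textbf{Reductions and the angular problem.} Replacing $z$ by $-z$ changes $B$ to $-B$, and multiplying the polynomial by $-1$ changes all three signs. Neither operation changes the quantity being maximized, so I may assume $B\ge 0$ and, where convenient, $A\ge 0$. With $t=\cos\theta\in[-1,1]$, a direct expansion using that $A,B,C$ are real gives
\[
|A+Bz+Cz^2|^2=A^2+B^2r^2+C^2r^4+2Br(A+Cr^2)\,t+2ACr^2(2t^2-1).
\]
This is a quadratic $Q_r(t)$ in $t$, and for each fixed $r$ the inner problem is to maximize it on $[-1,1]$.

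\textbf{Case $AC\ge 0$.} Here $Q_r$ is convex in $t$, so its maximum is at an endpoint $t=\pm1$. This gives $\max_\theta|A+Bz+Cz^2|=|A|+|B|r+|C|r^2$. It remains to maximize
\[
g(r)=1+|A|+|B|r-(1-|C|)r^2
\]
on $[0,1]$. If $|C|<1$, then $g$ is concave with vertex at $r_0=|B|/(2(1-|C|))$.
\begin{itemize}
\item When $r_0\ge 1$, that is $|B|\ge 2(1-|C|)$, the maximum is $g(1)=|A|+|B|+|C|$.
\item Otherwise the maximum is $g(r_0)=1+|A|+B^2/(4(1-|C|))$.
\end{itemize}
If $|C|\ge1$, then $g$ is nondecreasing on $[0,1]$ and the first formula applies. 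This settles part (i).

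\textbf{Case $AC<0$.} Now $Q_r$ is concave in $t$, with vertex
\[
t_0(r)=-\frac{B(A+Cr^2)}{4ACr}.
\]
If $|t_0|\ge1$, the maximum is again at an endpoint. Since $A$ and $C$ have opposite signs, the endpoint value is $\bigl||A|+|B|r-|C|r^2\bigr|$ or $\bigl|-|A|+|B|r+|C|r^2\bigr|$. If $|t_0|<1$, the maximum is the interior value
\[
Q_r(t_0)=\Bigl(1-\frac{B^2}{4AC}\Bigr)\frac{(A-Cr^2)^2}{1}\cdot\frac{1}{1}\quad\text{up to the exact simplification,}
\]
which after simplification should reduce to $(|A|+|C|r^2)^2\bigl(1-B^2/(4ACr^2)\bigr)$ suitably interpreted. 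I would verify this algebra explicitly rather than rely on the displayed form.

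I then maximize $\sqrt{\max_t Q_r}+1-r^2$ over $r\in[0,1]$, splitting $[0,1]$ into the subintervals where $|t_0(r)|\ge1$ or $<1$. On each piece the function of $r$ is elementary, either a quadratic or of the form $(|A|+|C|r^2)\sqrt{\cdot}+1-r^2$, so I locate its critical points and compare with the endpoint values.
\begin{itemize}
\item The conditions $-4AC(C^{-2}-1)\le B^2$ and $B^2<\min\{4(1+|C|)^2,-4AC(C^{-2}-1)\}$ should arise exactly as the conditions for the optimal $r$ to be interior and for $|t_0|\ge1$ versus $<1$ there. These give the values $1\mp|A|+B^2/(4(1\mp|C|))$.
\item When the optimum is at $r=1$, the three expressions in $R(A,B,C)$ come from the two endpoint values $|A|+|B|-|C|$ and $-|A|+|B|+|C|$ and the interior value $(|A|+|C|)\sqrt{1-B^2/(4AC)}$. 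The thresholds $|C|(|B|\pm4|A|)$ versus $|AB|$ are precisely the conditions $t_0(1)\ge1$ and $t_0(1)\le-1$.
\end{itemize}

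\textbf{Main obstacle.} The main difficulty is the bookkeeping in the case $AC<0$. One must show that the piecewise maximum over $r$ is attained exactly in the stated regions, including that the regions are consistent and exhaustive. One must also check that at the boundaries between regions the competing formulas agree, which confirms continuity. I would first treat the subcase $|C|\ge1$ separately, where the $r$-problem is monotone. For $|C|<1$ I would compare derivatives at $r=1$ to decide between the interior and boundary regimes.
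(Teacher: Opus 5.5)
The paper does not prove this lemma; it quotes it from Choi--Kim--Sugawa, so your proposal has to stand on its own. Part (i) is complete and correct. For (ii), reducing to $h(r)=\max_{|z|=r}|A+Bz+Cz^2|+1-r^2$ through the concave quadratic $Q_r(t)$ is the right framework. Your identification of $R(A,B,C)$ as the value at $r=1$ is also correct, with its thresholds equivalent to $t_0(1)\ge1$ and $t_0(1)\le-1$. Beyond that, however, (ii) is only described as what ``should'' happen, and the two concrete claims you make are wrong.

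First, the interior value is
\[
Q_r(t_0)=(A-Cr^2)^2\Bigl(1-\frac{B^2}{4AC}\Bigr)=\kappa^2\bigl(|A|+|C|r^2\bigr)^2,\qquad \kappa:=\sqrt{1-\frac{B^2}{4AC}},
\]
with $\kappa$ independent of $r$, not $1-B^2/(4ACr^2)$. So on $\{|t_0(r)|<1\}$ you get $h(r)=(1+\kappa|A|)+(\kappa|C|-1)r^2$. This is monotone, so that regime contributes nothing beyond its endpoint values.

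Second, normalize $A>0>C$, $B\ge0$. On $\{t_0\ge1\}$ one gets $h(r)=1+|A|+|B|r-(1+|C|)r^2$, and on $\{t_0\le-1\}$ one gets $h(r)=1-|A|+|B|r-(1-|C|)r^2$. The values $1\pm|A|+B^2/(4(1\pm|C|))$ are the vertex values of these two parabolas, attained at $r_\pm=|B|/(2(1\pm|C|))$. A direct computation gives $t_0(r_\pm)=\frac{1\pm|C|}{2|C|}-\frac{B^2}{8|A|(1\pm|C|)}$. Hence $t_0(r_+)>1\iff B^2<-4AC(C^{-2}-1)$, and, for $|C|<1$, $t_0(r_-)\le-1\iff B^2\ge-4AC(C^{-2}-1)$. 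So the first two conditions in (ii) say exactly that the vertex of the relevant outer parabola lies in $(0,1)$ and inside its own regime. The dichotomy is the sign of $t_0$, not ``$|t_0|\ge1$ versus $<1$'' as you suggest.

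Even with this corrected, a vertex lying in its own regime is only a local maximum. You still have to show that the other regimes never exceed it, and in the ``otherwise'' case that the maximum is at $r=1$. Comparing derivatives at $r=1$ does not do this unless you already know $h$ is unimodal, and that is the missing idea: an envelope property.

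With $a=|A|$, $b=|B|$, $c=|C|$ after normalization, $\kappa(a+cr^2)=\max_{t\in\mathbb R}\sqrt{Q_r(t)}$, and in fact
\[
\kappa^2(a+cr^2)^2-\bigl(a\pm br-cr^2\bigr)^2=\frac{\bigl(4acr\mp b(a-cr^2)\bigr)^2}{4ac}\ge0 .
\]
So the middle branch touches each outer parabola tangentially, exactly at the junction $t_0(r)=\pm1$. The middle branch is $\alpha+\beta r^2$ with $\beta$ of one fixed sign. By tangency, the outer branches have slopes of that same sign at the junctions, which forces $h$ to be unimodal on $[0,1]$. The maximum is then at $r_-$ in the first case and at $r_+$ in the second. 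Otherwise $h$ is nondecreasing, so $\Psi=h(1)=R(A,B,C)$. This also settles your subcase $|C|\ge1$, where $\beta=\kappa|C|-1\ge0$. Without this step, or an equivalent explicit comparison across the three regimes, the case structure in (ii) is asserted rather than proved.
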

	\begin{lem}\label{l2.6}\cite{Sim_Thomas_Symmetry_2020}
		Let $J$, $K$, and $L$ be numbers such that $J\geq 0$, $K\in\mathbb{C}$, and
		$L\in\mathbb{R}$. Let $h\in\mathcal{P}$ be of the form \eqref{e2.1} and define
		a function by
		\[
		\Phi(p_1,p_2)
		=
		\left|Kp_1^{2}+Lp_2\right|
		-
		J|p_1|.
		\]
		
		Then
		\[
		\Phi(p_1,p_2)
		\le
		\begin{cases}
			|4K+2L|-2J,
			& \text{if } |2K+L|\ge |L|+J,\\[2mm]
			2|L|,
			& \text{otherwise}.
		\end{cases}
		\]
		
		and
		
		\[
		-\Phi(p_1,p_2)
		\le
		\begin{cases}
			2J-M,
			& \text{when } J\ge M+2|L|,\\[3mm]
			2J\sqrt{\dfrac{2|L|}{M+2|L|}},
			& \text{when } J^{2}\le 2|L|(M+2|L|),\\[4mm]
			2|L|+\dfrac{J^{2}}{M+2|L|},
			& \text{otherwise},
		\end{cases}
		\]
		
		where
		\[
		M=|4K+2L|.
		\]
	\end{lem}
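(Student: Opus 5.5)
The plan is to reduce everything to the one-parameter representation of Lemma~\ref{l2.4}, after a rotation that makes $p_1$ real and nonnegative. Then both estimates become elementary one-variable optimisation problems on $[0,1]$.

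\textbf{Step 1: normalisation.} Replacing $p(z)$ by $p(e^{i\theta}z)$ keeps $p\in\mathcal P$ and changes $p_1\mapsto e^{i\theta}p_1$, $p_2\mapsto e^{2i\theta}p_2$. Hence $Kp_1^2+Lp_2$ is multiplied by $e^{2i\theta}$, and neither $|Kp_1^2+Lp_2|$ nor $|p_1|$ changes. By \eqref{e2.2}--\eqref{e2.3} we may therefore assume $p_1=2t$ with $t=u_1\in[0,1]$ and $p_2=2t^2+2(1-t^2)u_2$, where $u_2\in\ol\Omega$. This gives
\[
Kp_1^2+Lp_2=(4K+2L)t^2+2L(1-t^2)u_2 .
\]
Throughout, $M=|4K+2L|$.

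\textbf{Step 2: upper bound for $\Phi$.} The triangle inequality gives $\Phi\le g(t):=(M-2|L|)t^2+2|L|-2Jt$ for $t\in[0,1]$.
\begin{itemize}
\item If $M\ge 2|L|$, then $g$ is convex, so its maximum is $\max\{g(0),g(1)\}=\max\{2|L|,\,M-2J\}$.
\item If $M<2|L|$, then $g$ is concave with $g'(0)=-2J\le0$. So $g$ is nonincreasing and its maximum is $g(0)=2|L|$.
\end{itemize}
In both cases the maximum is $M-2J$ exactly when $M-2J\ge 2|L|$, that is, when $|2K+L|\ge |L|+J$, and $2|L|$ otherwise. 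This is the first assertion.

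\textbf{Step 3: upper bound for $-\Phi$.} This is the main step, and it needs the reverse triangle inequality instead. For fixed $t$ we have
\[
|Kp_1^2+Lp_2|\ge \max\{0,\;Mt^2-2|L|(1-t^2)\}.
\]
Thus
\[
-\Phi\le \max_{t\in[0,1]}\Big(2Jt-\max\{0,(M+2|L|)t^2-2|L|\}\Big).
\]
Set $t_0=\sqrt{2|L|/(M+2|L|)}$.
\begin{itemize}
\item On $[0,t_0]$ the expression is $2Jt$, which is increasing.
\item On $[t_0,1]$ it is the concave quadratic $h(t)=2Jt-(M+2|L|)t^2+2|L|$, with vertex at $t^*=J/(M+2|L|)$.
\end{itemize}
The three cases follow from where $t^*$ lies:
\begin{itemize}
\item If $t^*\ge1$, i.e.\ $J\ge M+2|L|$, the maximum is $h(1)=2J-M$.
\item If $t^*\le t_0$, i.e.\ $J^2\le 2|L|(M+2|L|)$, the maximum is at $t_0$, giving $2Jt_0=2J\sqrt{2|L|/(M+2|L|)}$.
\item Otherwise the maximum is $h(t^*)=2|L|+J^2/(M+2|L|)$.
\end{itemize}
Continuity at $t_0$ is what lets the two pieces be glued together.

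The main care is needed at this step: the case boundaries must be checked to match the stated conditions exactly. Note also that for sharpness the lower bound on $|Kp_1^2+Lp_2|$ is attained. One chooses $u_2$ of modulus at most $1$ so that $2L(1-t^2)u_2$ points opposite to $(4K+2L)t^2$. Lemma~\ref{l2.4} then realises the corresponding $p\in\mathcal P$.
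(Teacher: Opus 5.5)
\textbf{Verdict: correct; there is no in-paper argument to compare against.} The paper does not prove this lemma. It quotes it from Sim--Thomas \cite{Sim_Thomas_Symmetry_2020} and uses it as a black box in Theorems~\ref{t3.4}, \ref{t3.5} and~\ref{t3.10}. Your argument is a correct, self-contained verification that relies only on Lemma~\ref{l2.4}, which the paper already states.

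\textbf{What your argument does.} After rotating so that $p_1=2t\ge 0$, the triangle and reverse triangle inequalities reduce the two claims to maximising two functions over $[0,1]$: $(M-2|L|)t^2+2|L|-2Jt$ and $2Jt-\max\{0,(M+2|L|)t^2-2|L|\}$. Your case analysis reproduces the stated thresholds exactly.
\begin{itemize}
\item For $\Phi$ you take the endpoint maximum of a convex quadratic, or note monotonicity of a concave one since $g'(0)=-2J\le 0$.
\item For $-\Phi$ you compare the vertex $t^*=J/(M+2|L|)$ with $t_0$ and with $1$, and the identity $h(t_0)=2Jt_0$ glues the two pieces together.
\end{itemize}

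\textbf{What this route gains over the citation.} It shows where the three regimes come from. It also identifies the extremal configurations: either $|u_2|=1$ with $2L(1-t^2)u_2$ pointing opposite to $(4K+2L)t^2$, or $Kp_1^2+Lp_2=0$ at $t=t_0$. This is exactly what the paper later needs when it builds its extremal functions for the sharpness claims.

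\textbf{Two small remarks.}
\begin{itemize}
\item You never use $L\in\mathbb{R}$, so your proof also covers complex $L$.
\item In the degenerate case $K=L=0$, your $t_0$ and $t^*$ are undefined. The bound still holds there, since $-\Phi=J|p_1|\le 2J=2J-M$, which is the first regime; the second formula is meaningless in that case anyway.
\end{itemize}
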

    \section{\bf Main results}
    \subsection{\bf Coefficient bounds}
    \begin{theo}\label{t3.1}
			Let $f(z)=z+a_2z^2+a_3z^3+\cdots\in\mathcal{C}_{\arcsin} $. Then
		$$
		|a_2|\le \frac{1}{\pi}, \ |a_3| \le \frac{1}{3\pi}, \ |a_4| \le \frac{1}{6\pi}.
		$$
		All inequalities are sharp.
	\end{theo}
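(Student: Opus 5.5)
Since $f\in\mathcal C_{\arcsin}$, there is $\omega\in\mathcal B_0$ with $1+zf''(z)/f'(z)=\varphi(\omega(z))$, $\varphi(z)=1+\frac{2}{\pi}\arcsin z$. We pass to a Carath\'eodory function $p=(1+\omega)/(1-\omega)\in\mathcal P$, so $\omega=(p-1)/(p+1)=\frac{p_1}{2}z+\bigl(\frac{p_2}{2}-\frac{p_1^2}{4}\bigr)z^2+\bigl(\frac{p_3}{2}-\frac{p_1p_2}{2}+\frac{p_1^3}{8}\bigr)z^3+\cdots$. Using $\arcsin w=w+w^3/6+\cdots$ we expand $\varphi(\omega(z))$ up to $z^3$. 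We then compare with $1+zf''/f'=1+2a_2z+(6a_3-4a_2^2)z^2+(12a_4-18a_2a_3+8a_2^3)z^3+\cdots$ and solve for $a_2,a_3,a_4$ as polynomials in $p_1,p_2,p_3$.

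Writing $c=2/\pi$, the first two relations are $2a_2=c\,p_1/2$, so $a_2=p_1/(2\pi)$ and $|a_2|\le 1/\pi$ by $|p_1|\le2$. Next, $6a_3=4a_2^2+c(p_2/2-p_1^2/4)$, which gives
\[
a_3=\frac{1}{6\pi}\Bigl(p_2-\frac{p_1^2}{2}+\frac{p_1^2}{\pi}\Bigr)=\frac{1}{6\pi}\bigl(p_2-v p_1^2\bigr),\qquad v=\frac12-\frac1\pi\in[0,1].
\]
Lemma~\ref{l2.1} therefore yields $|a_3|\le \frac{2}{6\pi}=\frac1{3\pi}$. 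For $a_4$ we compute the $z^3$ coefficient $c\bigl(\omega_3+\omega_1^3/6\bigr)$, substitute $a_2,a_3$, and collect terms into the form
\[
a_4=\frac{1}{12\pi}\bigl(p_3-2Bp_1p_2+Dp_1^3\bigr)
\]
with explicit constants $B,D$ depending on $\pi$. We then verify numerically that $0\le B\le1$ and $2B(2B-1)\le D\le B$, and Lemma~\ref{l2.2} gives $|a_4|\le \frac{2}{12\pi}=\frac1{6\pi}$. The main obstacle is this step: the algebra for $B$ and $D$ must be done carefully, since the $1/6$ from $\arcsin$ and the $\pi$-dependent terms from $a_2a_3$ and $a_2^3$ all enter $D$, and the lemma's conditions must actually hold. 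If they failed, the fallback would be the parametrization of Lemma~\ref{l2.4} together with Lemma~\ref{l2.5}.

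For sharpness we use extremal functions $f_n\in\mathcal C_{\arcsin}$ defined by $1+zf_n''/f_n'=\varphi(z^n)$, $n=1,2,3$, that is, $\omega(z)=z^n$. For $n=1$ we get $a_2=1/\pi$. For $n=2$ the coefficients give $a_2=0$ and $6a_3=c$, so $a_3=1/(3\pi)$. For $n=3$ we have $a_2=a_3=0$ and $12a_4=c$, so $a_4=1/(6\pi)$. Each of these is obtained explicitly by $f_n(z)=\int_0^z\exp\bigl(\int_0^\zeta\frac{\varphi(t^n)-1}{t}\,dt\bigr)d\zeta$. This matches equality in Lemma~\ref{l2.1} for $p(z)=(1+z^2)/(1-z^2)$ and in Lemma~\ref{l2.2} for $p(z)=(1+z^3)/(1-z^3)$.
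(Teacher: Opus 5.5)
Your proposal is correct and follows the paper's proof essentially step for step: the same substitution $\omega=(p-1)/(p+1)$, Lemma~\ref{l2.1} with $v=\frac{\pi-2}{2\pi}$ for $a_3$, Lemma~\ref{l2.2} for $a_4$, and the extremal functions generated by $\omega(z)=z^n$, $n=1,2,3$. Carrying out the algebra you deferred gives $B=\frac{2\pi-3}{4\pi}\approx 0.261$ and $D=\frac{7\pi^2-18\pi+12}{24\pi^2}\approx 0.104$, which satisfy $0\le B\le 1$ and $2B(2B-1)\le D\le B$, so the fallback to Lemmas~\ref{l2.4}--\ref{l2.5} is not needed.
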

    \begin{proof}
    	Let $f\in\mathcal{C}_{\arcsin}$, then by the definition of the class there exists Schwarz function $\omega(z)$ such that
        \bea\label{e3.1} \frac{zf^{\prime\prime}(z)}{f^{\prime}(z)}= \frac{2}{\pi}\operatorname{arcsin}(\omega (z)). \eea 
			Suppose that $\omega(z)=\frac{p(z)-1}{p(z)+1}$, where $p\in\mathcal{P}$ is defined in (\ref{e2.1}), then we have 
		\bea\label{e3.2} 
		\omega(z)=\frac{p_1}{2}z+\left(\frac{p_2}{2}-\frac{p_1^2}{4}\right)z^2+\left(\frac{p_3}{2}-\frac{p_1p_2}{2}+\frac{p_1^3}{8}\right)z^3+\left(\frac{p_4}{2}-\frac{p_1p_3}{2}-\frac{p_2^2}{4}+\frac{3p_1^2p_2}{8}-\frac{p_1^4}{16}\right)z^4+\cdots.
		\eea
		Substituting this expression $\omega(z)$ along with (\ref{e-1.1}) and (\ref{e3.2}) into (\ref{e3.1}) we have 
		\bea 
		\label{e3.3}a_2&=&\frac{1}{2\pi}p_1,\\ 
		\label{e3.4}a_3&=&\frac{1}{6\pi}\left(p_2-\left(\frac{\pi-2}{2\pi}\right)p_1^2\right),\\
		\label{e3.5}a_4&=&\frac{1}{12\pi}\left(p_3-\left(\frac{2\pi-3}{2\pi}\right)p_1p_2+\left(\frac{7\pi^2-18\pi+12}{24\pi^2}\right)p_1^3\right),
		%\\
	%	\label{e5.6}a_5&=&\frac{3}{16}c_4+\frac{1}{32}c_2^2+\frac{7}{64}c_1c_3+\frac{1}{128}c_1^2c_2-\frac{1}{6144}c_1^4.
		\eea
        {\bf A.} Using \eqref{e3.3} we obtain $|a_2|=\frac{1}{2\pi}|p_1|\le\frac{1}{\pi}$.\\
        To show the sharpness of the inequality, we consider the function $p(z)=\frac{1+z}{1-z}$, hence $\omega(z)=z$. Let $f_1\in\mathcal C_{\arcsin}$
        be the function determined by \eqref{e3.1}
        and its solution
       \bea\label{f_1} f_1(z)=\int_{0}^{z}\exp\left(\frac{2}{\pi}\int_{0}^{s}\frac{\operatorname{arcsin}t}{t}\,dt\right)\,ds=z+\frac{1}{\pi}z^2+\frac{2}{3\pi^2}z^3+\left( \frac{1}{36\pi}+\frac{1}{3\pi^3}\right)z^4+\cdots.\eea
       Thus $a_1=1$, $a_2=\frac{1}{\pi}$, then we have $| a_2|=\frac{1}{\pi}$.
        \\
        {\bf B.} Applying Lemma~\ref{l2.1} to \eqref{e3.4},  we get
        $|a_3| \le \frac{1}{6\pi}\left| p_2-\left(\frac{\pi-2}{2\pi}\right)p_1^2\right|\le\frac{1}{3\pi}$.\\
        For sharpness, we choose $p(z)=\frac{1+z^2}{1-z^2}$. It follows that $\omega(z)=z^2$, Let $f_2(z)\in\mathcal{C}_{\arcsin}$, satisfying differential equation (\ref{e3.1}) and its solution
       \bea\label{f_2} f_2(z)=\int_{0}^{z}\exp\left(\frac{2}{\pi}\int_{0}^{s}\frac{\operatorname{arcsin}t^2}{t}\,dt\right)\,ds=z+\frac{1}{3\pi}z^3+\frac{1}{10\pi^2}z^5+\cdots.\eea
       Therefore, we have $|a_3|=\frac{1}{3\pi}$.\\
        {\bf C.} From (\ref{e3.5}) we have 
        \bea\label{e3.6} |a_4| &=&\frac{1}{12\pi}\left| p_3-\left(\frac{2\pi-3}{2\pi}\right)p_1p_2+\left(\frac{7\pi^2-18\pi+12}{24\pi^2}\right)p_1^3\right|\nonumber\\&=&\frac{1}{12\pi}\left|p_3-2Bp_1p_2+Dp_1^3 \right|,\eea
        where $B=\frac{2\pi-3}{4\pi}\approx 0.26127$ and $D=\frac{7\pi^2-18\pi+12}{24\pi^2}\approx 0.10359$.\par 
   As illustrated in Figure~\ref{Fig:BDRegion},
   the point $(B,D)$ lies in the admissible region satisfying
   $2B(2B-1)\le D\le B$. Therefore, Lemma~\ref{l2.2} applies and it follows that $2B(2B-1)\le D\le B$, thus by Lemma \ref{l2.2} from (\ref{e3.6}) we obtain \beas |a_4| \le \frac{1}{6\pi}.\eeas
        
\begin{figure}[H]
	\centering
	\begin{tikzpicture}
		\begin{axis}[
			width=11cm,
			height=7.2cm,
			xmin=0,
			xmax=0.5,
			ymin=-0.30,
			ymax=0.50,
			xlabel={$B$},
			ylabel={$D$},
			axis lines=left,
			grid=major,
			major grid style={gray!20},
			minor tick num=1,
			tick style={black},
			line width=0.9pt,
			clip=false,
			legend style={
				at={(0.03,0.97)},
				anchor=north west,
				draw=black,
				fill=white,
				rounded corners=2pt,
				font=\small,
				cells={anchor=west},
				row sep=2pt,
				inner sep=3pt
			}
			]
			
			\addplot[domain=0:0.5, samples=300, very thick, blue]
			({x},{2*x*(2*x-1)});
			\addlegendentry{$D=2B(2B-1)$}

			\addplot[domain=0:0.5, samples=2, very thick, red]
			({x},{x});
			\addlegendentry{$D=B$}

			\addplot[domain=0:0.5, samples=300, draw=none, name path=Upper, forget plot]
			({x},{x});
			
			\addplot[domain=0:0.5, samples=300, draw=none, name path=Lower, forget plot]
			({x},{2*x*(2*x-1)});
			
			\addplot[draw=green!60!black, fill=green!20, forget plot]
			fill between[of=Upper and Lower];

			\addlegendimage{area legend, draw=green!60!black, fill=green!20}
			\addlegendentry{Admissible region}

			\addplot[only marks, mark=*, mark size=3pt, black]
			coordinates{(0.2612675854,0.1035948438)};
			
			\node[fill=white, inner sep=1pt, anchor=west, font=\small]
			at (axis cs:0.2722675854,0.1035948438) {$\left(B,D\right)$};
			
		\end{axis}
	\end{tikzpicture}
	
	\caption{The shaded region represents the admissible parameter domain
		$\{(B,D):2B(2B-1)\le D\le B\}$ required in Lemma~\ref{l2.2}. The point
		$\left(B,D\right)=\left(\frac{2\pi-3}{4\pi},
		\frac{7\pi^2-18\pi+12}{24\pi^2}\right)$ lies inside this region,
		confirming that Lemma~\ref{l2.2} is applicable.}
	
	\label{Fig:BDRegion}
\end{figure}       
        For sharpness of the inequality, we consider $p(z)=\frac{1+z^3}{1-z^3}$. It follows that $\omega(z)=z^3$. Let $f_3(z)\in\mathcal{C}_{\arcsin}$, satisfying differential equation (\ref{e3.1}) and its solution
       \bea\label{f_3} f_3(z)=\int_{0}^{z}\exp\left(\frac{2}{\pi}\int_{0}^{s}\frac{\operatorname{arcsin}t^3}{t}\,dt\right)\,ds=z+\frac{1}{6\pi}z^4+\frac{2}{63\pi^2}z^7+\cdots.\eea
       Hence, we have $|a_4|=\frac{1}{6\pi}$.
    \end{proof}
    \begin{table}[H]
    	\centering
    	\renewcommand{\arraystretch}{1.25}
    	\caption{Sharp coefficient bounds together with the corresponding extremal functions.}
    	\label{Tab:SharpCoeff}
    	\begin{tabular}{|c|c|c|c|}
    		\hline
    		Coefficient & Sharp bound & Extremal Carath\'eodory function & Corresponding $\omega(z)$\\
    		\hline
    		$|a_2|$ &
    		$\displaystyle \frac1\pi$
    		&
    		$\displaystyle \frac{1+z}{1-z}$
    		&
    		$z$
    		\\
    		\hline
    		$|a_3|$
    		&
    		$\displaystyle \frac1{3\pi}$
    		&
    		$\displaystyle \frac{1+z^2}{1-z^2}$
    		&
    		$z^2$
    		\\
    		\hline
    		$|a_4|$
    		&
    		$\displaystyle \frac1{6\pi}$
    		&
    		$\displaystyle \frac{1+z^3}{1-z^3}$
    		&
    		$z^3$
    		\\
    		\hline
    	\end{tabular}
    \end{table}
    \begin{theo}{\label{t3.2}}
	 	Let $f(z)=z+a_2z^2+a_3z^3+\cdots\in\mathcal{C}_{\arcsin} $ and $\gamma_1$, $\gamma_2$, $\gamma_3$ are given in \emph{(\ref{e1.3})}. Then
	 	$$
	 	|\gamma_1|\le \frac{1}{2\pi},\quad |\gamma_2|\le \frac{1}{6\pi}, \quad |\gamma_3|\le \frac{1}{12\pi}.
	 	$$
	 	The estimates are sharp.  \end{theo}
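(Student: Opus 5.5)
The plan is to express each $\gamma_j$ through the Carath\'eodory coefficients $p_1,p_2,p_3$ using \eqref{e3.3}--\eqref{e3.5}. Each resulting expression should then be one of the functionals handled by Lemma~\ref{l2.1} or Lemma~\ref{l2.2}, as in the proof of Theorem~\ref{t3.1}.

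\textbf{Step 1: $\gamma_1$.} By \eqref{e1.3} and \eqref{e3.3}, $\gamma_1=\frac{1}{2}a_2=\frac{1}{4\pi}p_1$. Since $|p_1|\le 2$, this gives $|\gamma_1|\le\frac{1}{2\pi}$. Equality holds for $f_1$ in \eqref{f_1}, i.e.\ $\omega(z)=z$.

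\textbf{Step 2: $\gamma_2$.} Insert \eqref{e3.3} and \eqref{e3.4} into $\gamma_2=\frac12\left(a_3-\frac12a_2^2\right)$ and collect everything over $\frac{1}{12\pi}$. The $p_1^2$ coefficient is $\frac{\pi-2}{2\pi}+\frac{3}{4\pi}=\frac{2\pi-1}{4\pi}$, so
\[
\gamma_2=\frac{1}{12\pi}\left(p_2-\frac{2\pi-1}{4\pi}\,p_1^2\right).
\]
Here $v=\frac{2\pi-1}{4\pi}\approx0.42$ lies in $[0,1]$, so Lemma~\ref{l2.1} gives $|\gamma_2|\le\frac{2}{12\pi}=\frac{1}{6\pi}$. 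For sharpness take $p(z)=\frac{1+z^2}{1-z^2}$, i.e.\ the function $f_2$ in \eqref{f_2}. There $a_2=0$ and $a_3=\frac{1}{3\pi}$, hence $\gamma_2=\frac{1}{6\pi}$.

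\textbf{Step 3: $\gamma_3$.} This is the main computation. Substitute \eqref{e3.3}--\eqref{e3.5} into $\gamma_3=\frac12\left(a_4-a_2a_3+\frac13a_2^3\right)$ and factor out $\frac{1}{24\pi}$. Writing each term over $\frac{1}{12\pi}$:
\begin{itemize}
\item $a_2a_3$ contributes $\frac{1}{\pi}p_1p_2-\frac{\pi-2}{2\pi^2}p_1^3$;
\item $\frac13a_2^3$ contributes $\frac{1}{2\pi^2}p_1^3$.
\end{itemize}
Collecting terms, I expect
\[
\gamma_3=\frac{1}{24\pi}\left(p_3-2Bp_1p_2+Dp_1^3\right),\qquad B=\frac{2\pi-1}{4\pi},\quad D=\frac{7\pi-6}{24\pi}.
\]
The value of $D$ comes from $\frac{7\pi^2-18\pi+12}{24\pi^2}+\frac{\pi-1}{2\pi^2}$.

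The delicate step is verifying this algebra and then checking the hypotheses of Lemma~\ref{l2.2} numerically. We have $B\approx0.4204\in[0,1]$ and $D\approx0.2121$. Also $2B(2B-1)\approx-0.134$, so $2B(2B-1)\le D\le B$ holds. Lemma~\ref{l2.2} then yields $|\gamma_3|\le\frac{2}{24\pi}=\frac{1}{12\pi}$.

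For sharpness take $p(z)=\frac{1+z^3}{1-z^3}$, giving $\omega(z)=z^3$ and the function $f_3$ in \eqref{f_3}. There $a_2=a_3=0$ and $a_4=\frac{1}{6\pi}$, so $\gamma_3=\frac12a_4=\frac{1}{12\pi}$.
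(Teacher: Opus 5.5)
Your proposal is correct and follows the paper's route: you express $\gamma_1,\gamma_2,\gamma_3$ through $p_1,p_2,p_3$ via \eqref{e3.3}--\eqref{e3.5}, then apply Lemma~\ref{l2.1} with $v=\frac{2\pi-1}{4\pi}$ and Lemma~\ref{l2.2} with $B=\frac{2\pi-1}{4\pi}$ and $D=\frac{7\pi-6}{24\pi}=\frac{7}{24}-\frac{1}{4\pi}$, which are exactly the paper's constants, and you use $f_1,f_2,f_3$ as extremals. Your algebra checks out, and since $D>0$ while $B(2B-1)<0$ for $B<\frac12$ (so also $2B(2B-1)<0$), the hypothesis of Lemma~\ref{l2.2} holds with room to spare.
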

        \begin{proof}
            Proceeding in the same way as Theorem \ref{t3.1}, we have the expressions of $a_2,a_3,a_4$ same as (\ref{e3.3} - \ref{e3.5}) and we can get the desired estimade bounds of $|\gamma_i|$, $i=1,2,3$. Also for sharpness of the inequality we use the function $f_1$, $f_2$ and $f_3$ defined by \eqref{f_1}, \eqref{f_2} and \eqref{f_3}.
            
            %\medskip \\{\bf A.} Using (\ref{e3.3}) in (\ref{e1.3}) we have \beas |\gamma_1| =\left|\frac{1}{4\pi}p_1\right| \le \frac{1}{2\pi}. \eeas Hence, \beas|\gamma_1| \le \frac{1}{2\pi}, \eeas is  the desired estimate of bound. \medskip \\{\bf B.} Substituting the expressions of $a_2,a_3$ in $\gamma_2$ of (\ref{e1.3}) we get \beas |\gamma_2|=\frac{1}{12\pi}\left|p_2-\left(\frac{1}{2}-\frac{1}{4\pi}\right)p_1^2\right|.\eeas Using Lemma \ref{l2.1} we get \beas |\gamma_2| \le \frac{1}{6\pi}, \eeas which is the desired estimation of bound. \medskip \\{\bf C.} From (\ref{e1.3}) using (\ref{e3.3} - \ref{e3.5}) we obtain \beas |\gamma_3|&=&\frac{1}{24\pi}\left| p_3-\left(1-\frac{1}{2\pi}\right)p_1p_2+\left(\frac{7}{24}-\frac{1}{4\pi}\right)p_1^3\right|\\&=& \frac{1}{24\pi}|p_3-2Bp_1p_2+Dp_1^3|,\eeas where $B=\frac{1}{2}\left(1-\frac{1}{2\pi}\right)\approx0.4204$, $D=\left(\frac{7}{24}-\frac{1}{4\pi}\right)\approx0.2120$. Using Lemma \ref{l2.2} we have \beas |\gamma_3|\le \frac{1}{12\pi}. \eeas 
        \end{proof}
         \begin{theo}\label{t3.3}
	 	Let $f(z)=z+a_2z^2+a_3z^3+\cdots\in\mathcal{C}^{*}_{\arcsin} $ and $\Gamma_1$, $\Gamma_2$, $\Gamma_3$ are given in \emph{(\ref{e1.4})}. Then
	 	$$
	 	|\Gamma_1| \ \le \ \frac{1}{2\pi},
	 	\qquad |\Gamma_2| \ \le \ \frac{1}{6\pi},\qquad |\Gamma_3| \ \le \ \frac{1}{12\pi}.
	 	$$
	 	The estimates are sharp.  \end{theo}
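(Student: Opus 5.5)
We express $\Gamma_1,\Gamma_2,\Gamma_3$ through the Carath\'eodory coefficients of the function $p\in\mathcal P$ attached to $f$, exactly as in the proof of Theorem~\ref{t3.1}, by substituting \eqref{e3.3}--\eqref{e3.5} into \eqref{e1.4}. The first two bounds are then immediate. For $\Gamma_1$ we have $\Gamma_1=-\frac{1}{4\pi}p_1$, and $|p_1|\le 2$ gives $|\Gamma_1|\le\frac{1}{2\pi}$. For $\Gamma_2$, since $\frac32 a_2^2=\frac{1}{6\pi}\cdot\frac{9}{4\pi}p_1^2$, we obtain
\[
\Gamma_2=-\frac{1}{12\pi}\left(p_2-vp_1^2\right),\qquad v=\frac{\pi-2}{2\pi}+\frac{9}{4\pi}=\frac{2\pi+5}{4\pi}\approx 0.898 .
\]
Because $0\le v\le 1$, Lemma~\ref{l2.1} gives $|\Gamma_2|\le\frac{1}{6\pi}$. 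Sharpness comes from $f_1$ in \eqref{f_1} for $\Gamma_1$ (where $a_2=1/\pi$) and from $f_2$ in \eqref{f_2} for $\Gamma_2$ (where $a_2=0$ and $a_3=\frac{1}{3\pi}$).

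For $\Gamma_3$, a direct computation gives
\[
\Gamma_3=-\frac{1}{24\pi}\left(p_3-2Bp_1p_2+Dp_1^3\right),\qquad B=\frac{2\pi+5}{4\pi}\approx 0.898,\quad D=\frac{7\pi^2+30\pi+36}{24\pi^2}\approx 0.842 .
\]
Here $2B(2B-1)\approx 1.43>D$, so Lemma~\ref{l2.2} does \emph{not} apply. This is the main obstacle. Instead we use the parametrization of Lemma~\ref{l2.4}. By rotation invariance of the class we may assume $p_1=2t$ with $t=u_1\in[0,1]$. Substituting \eqref{e2.2}--\eqref{e2.4} and collecting terms gives
\[
p_3-2Bp_1p_2+Dp_1^3=(2-8B+8D)t^3+2(1-t^2)\Big[2t(1-2B)u_2-tu_2^2+(1-|u_2|^2)u_3\Big].
\]
Using $|u_3|\le1$ and the triangle inequality, the modulus is at most
\[
|2-8B+8D|\,t^3+2(1-t^2)\,\Psi\big(0,\,2t(1-2B),\,-t\big),
\]
with $\Psi$ as in Lemma~\ref{l2.5}. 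Since $A=0$, case (i) of that lemma applies. When $|2t(1-2B)|=2(2B-1)t<2(1-t)$, i.e. for small $t$, we get $\Psi=1+\frac{(2B-1)^2t^2}{1-t}$. Otherwise $\Psi=t+2(2B-1)t$.

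It then remains to show that the resulting one-variable function $g(t)$ satisfies $g(t)\le 2$ on $[0,1]$. Numerically, $|2-8B+8D|\approx 1.55$, $2B-1\approx 0.796$, and $g(0)=2$, $g(1)\approx1.55$. The check is routine calculus: compare $g(t)$ with $2$ on each of the two subintervals, clearing the denominator $1-t$ on the first. This is where the real work lies. If the crude triangle inequality turns out to be too lossy near $t=0$, I would instead keep the $t^3$ term inside $\Psi$, as $A=\frac{(1-4B+4D)t^3}{1-t^2}$, and apply Lemma~\ref{l2.5} in full.

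Granting $g\le 2$, we conclude $|\Gamma_3|\le\frac{2}{24\pi}=\frac{1}{12\pi}$. Equality is attained at $t=0$ and $|u_3|=1$, i.e. for $p(z)=\frac{1+z^3}{1-z^3}$. This corresponds to $f_3$ in \eqref{f_3}, for which $a_2=a_3=0$ and $a_4=\frac{1}{6\pi}$, so that $|\Gamma_3|=\frac12|a_4|=\frac{1}{12\pi}$.
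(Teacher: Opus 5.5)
Your treatment of $\Gamma_1$ and $\Gamma_2$ is correct and matches the paper. You are also right that Lemma~\ref{l2.2} fails for $\Gamma_3$; the paper likewise switches to Lemmas~\ref{l2.4} and~\ref{l2.5}. Your expansion, with $2-8B+8D=\frac13+\frac{12}{\pi^2}$ and $2B-1=\frac{5}{2\pi}$, is correct.

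The gap is the step you ``grant'': the function $g$ obtained by splitting off the $t^3$ term is \emph{not} bounded by $2$. For $t<\frac{2\pi}{2\pi+5}\approx0.557$ your $g$ is
\[
g(t)=2-2\Big(1-\frac{25}{4\pi^2}\Big)t^2+\Big(\frac13+\frac{49}{2\pi^2}\Big)t^3 .
\]
This exceeds $2$ once $t>0.261$; for example $g(1/2)=2+\frac{99}{16\pi^2}-\frac{11}{24}\approx2.17$. On the other range, $g(t)=2(1+\frac5\pi)t-(\frac53+\frac{10}{\pi}-\frac{12}{\pi^2})t^3$ reaches about $2.38$ near $t\approx0.69$. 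So the crude estimate does not fail near $t=0$, where it is fine. It fails on roughly $0.26<t<0.90$.

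The loss is structural. With $B'=-\frac5\pi t$ and $C'=-t$, the value $\Psi(0,B',C')$ is attained at a positive real $u_2$. There $B'u_2+C'u_2^2<0$ partially cancels the positive $t^3$ term, whereas the triangle inequality adds the two maxima as if they occurred at the same $u_2$.

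Your fallback is exactly the paper's route, and it works, but it must be carried out, because that is where the proof lives. Write
\[
p_3-2Bp_1p_2+Dp_1^3=2(1-t^2)\big[A+B'u_2+C'u_2^2+(1-|u_2|^2)u_3\big],\qquad A=\frac{(\frac16+\frac6{\pi^2})t^3}{1-t^2}.
\]
Then $AC'<0$ and
\[
B'^2=\frac{25}{\pi^2}t^2<\Big(\frac23+\frac{24}{\pi^2}\Big)t^2=-4AC'(C'^{-2}-1)<4\le4(1+|C'|)^2 .
\]
So Lemma~\ref{l2.5}(ii) gives $\Psi(A,B',C')=1+|A|+\frac{B'^2}{4(1+t)}$. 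Note the denominator is $1+t$, not the $1-t$ of your crude version. Hence
\[
(1-t^2)\Psi=1-\Big(1-\frac{25}{4\pi^2}\Big)t^2+\Big(\frac16-\frac{1}{4\pi^2}\Big)t^3\le1 ,
\]
and $|\Gamma_3|\le\frac{1}{24\pi}\cdot2(1-t^2)\Psi\le\frac{1}{12\pi}$ for $0<t<1$. The endpoints $t=0$ and $t=1$ are immediate. With this computation replacing the crude estimate, your sharpness argument via $f_3$ completes the proof.
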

        \begin{proof}
        Similar to Theorem \ref{t3.1}, the expressions for $a_2, a_3, a_4$ are identical to those in (\ref{e3.3} - \ref{e3.5}).
        \medskip\\
        {\bf{A.}} From (\ref{e1.4}) and (\ref{e3.3}), we get 
		\beas |\Gamma_1| =\frac{1}{2}|p_1| \ \le \ \frac{1}{2\pi}.\eeas
        The estimate is sharp, since equality is attained by the extremal function $f_1$ defined by \eqref{f_1}.
        \medskip\\
        {\bf{B.}} Using \eqref{e3.3}--(\ref{e3.4}) in (\ref{e1.4}) we obtain \beas |\Gamma_2|&=&\frac{1}{2}\left| a_3-\frac{3}{2}a_2^2\right|\\&=&\frac{1}{12\pi}\left| p_2-\left(\frac{2\pi+5}{4\pi}\right)p_1^2 \right|. \eeas
		Hence, using Lemma {\ref{l2.1}} we have \[|\Gamma_2| \le \frac{1}{6\pi}.\]
        For sharpness of the inequality we choose the extremal function $f_2$ defined by \eqref{f_2}.
        \medskip\\
        {\bf{C.}} Using \eqref{e3.3}--(\ref{e3.5}) in (\ref{e1.4}) we get \bea\label{e3.6.1}|\Gamma_3| &=&\left| \frac{1}{2}\left(a_4-4a_2a_3+\frac{10}{3}a_2^3\right) \right| \nonumber\\&=& \frac{1}{24\pi} \left| p_3-\left(\frac{2\pi+5}{2\pi}\right) p_1p_2+\left(\frac{7\pi^2+30\pi+36}{24\pi^2}\right)p_1^3 \right| .\eea Now, substituting the expressions of $p_i$, $i=1,2,3$ from (\ref{e2.2}-\ref{e2.4}) in (\ref{e3.6.1}) we get \bea \label{e3.6.2} |\Gamma_3|=\frac{1}{12\pi}\left|\left(\frac{\pi^2+36}{6\pi^2}\right)u_1^3-\frac{5}{\pi}(1-u_1^2) u_1u_2-(1-u_1^2)u_1u_2^2+(1-u_1^2)(1-|u_2 |^2)u_3\right|.\eea
	Here, by Lemma \ref{l2.4}, we have $u_1\in [0, 1]$ and $|u_2|\le1$, $|u_3|\le 1$. Then from (\ref{e3.6.2}) we first consider the boundary cases:
	\beas |\Gamma_3|=
	\begin{cases}
		\frac{1}{12\pi}|\left(1-|u_2|^2\right)u_3|\le \frac{1}{12\pi}\approx 0.02653, & \text{if} \ u_1=0, \\[3mm]
		\frac{1}{12\pi}\left(\frac{1}{6}+\frac{6}{\pi^2}\right)\approx 0.02055, & \ \text{if} \ u_1=1.
	\end{cases} \eeas
	 Now, for $0< u_1<1$ applying the triangle inequality to (\ref{e3.6.2}) and $|u_3|\le1$, we can deduce the following inequality
            \bea\label{e3.6.3} \nonumber|\Gamma_3 |&\le&\frac{1}{12\pi}|1-u_1^2| \left(\left| \frac{\left(\pi^2+36\right)u_1^3}{6\pi^2(1-u_1^2)}-\frac{5}{\pi}u_1u_2-u_1u_2^2\right|+(1-|u_2| ^2)\right)\\&=&\frac{1}{12\pi}|1-u_1^2|\Psi (A_1, B_1, C_1), \eea where $\Psi (A_1, B_1, C_1)=|A_1+B_1u_2+C_1u_2^2|+ (1-|u_2|^2)$ with  $ A_1=\frac{\left(\pi^2+36\right)u_1^3}{6\pi^2(1-u_1^2)}$, $B_1=-\frac{5}{\pi}u_1$, and $C_1=-u_1$. Suppose  $0<u_1<1$, then it is clear that $A_1C_1<0$. Thus, in view of Lemma~\ref{l2.5} and the values of $A_1$, $B_1$, and $C_1$ we consider that $$-4A_1C_1(C_1^{-2}-1)-4(1+|C_1|^2) = -4-\left(\frac{10\pi^2-72}{3\pi^2}\right)u_1^2 <0$$. Therefore, we have $B_1^2 < \min \{-4A_1C_1(C_1^{-2}-1), 4(1+|C_1 |^2)\}=-4A_1C_1(C_1^{-2}-1)$. Then applying Lemma \ref{l2.5}, we get \beas  \Psi(A_1,B_1,C_1)&=& 1+|A_1|+\frac{B_1^2}{4(1+|C_1 |)}\\&=&1+\frac{\left(\pi^2+36\right)u_1^3}{6\pi^2(1-u_1^2)}+\frac{25u_1^2}{4\pi^2(1+u_1)}.\eeas
    Now, from (\ref{e3.6.3}) we have $|\Gamma_3| \le \frac{1}{12\pi} \left\{\left(\frac{1}{6}-\frac{1}{4\pi^2}\right)u_1^3+\left(\frac{25}{4\pi^2}-1\right)u_1^2+1\right\}. $ Since, $0<u_1<1$, we get $|\Gamma_3| \le \frac{1}{12\pi}\left(\frac{1}{6}+\frac{6}{\pi^2}\right)$.
   Considering all the discussion we can conclude that the desired bound is $|\Gamma_3| \le \frac{1}{12\pi}$. 
     For sharpness we consider $f_3(z)=z+\frac{1}{6\pi}z^4+\frac{2}{63\pi^2}z^7+\cdots,$ defined by \eqref{f_3}. Therefore, the coefficients $a_2=0$, $a_3=0$ and $a_4=\frac{1}{6\pi}$ and we obtain the equality
     $|\Gamma_3| = \frac{1}{12\pi}. $
     \end{proof}
     \subsection{\bf Coefficient differences}
     \begin{theo}\label{t3.4}
	 	Let $f(z)=z+a_2z^2+a_3z^3+\cdots\in \mathcal{C}_{\arcsin}$ and let $\gamma_1,\gamma_2$ be given by \eqref{e1.3}. Then
	 	\beas
	 	-\frac{1}{2\pi}\left(1-\frac{1}{6\pi}\right)  \ \le \ |\gamma_2|-|\gamma_1| \ \le \ \frac{1}{6\pi}.\eeas
	 	Both inequalities are sharp.
	 \end{theo}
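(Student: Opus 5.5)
The plan is to write both $\gamma_1$ and $\gamma_2$ in terms of the Carath\'eodory coefficients $p_1,p_2$, which turns $|\gamma_2|-|\gamma_1|$ into exactly the functional $\Phi(p_1,p_2)=|Kp_1^2+Lp_2|-J|p_1|$ of Lemma~\ref{l2.6}. Both inequalities then come from the two halves of that lemma.

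\textbf{Step 1: express $\gamma_1,\gamma_2$ via $p_1,p_2$.} As in Theorem~\ref{t3.1}, I write $1+zf''/f'=\varphi(\omega)$ with $\omega=(p-1)/(p+1)$, $p\in\mathcal P$. Inserting \eqref{e3.3}--\eqref{e3.4} into \eqref{e1.3} gives $\gamma_1=\frac{p_1}{4\pi}$. Since $\frac14 a_2^2=\frac{p_1^2}{16\pi^2}$, it also gives
\[
\gamma_2=\frac{1}{12\pi}\left(p_2-\frac{\pi-2}{2\pi}p_1^2\right)-\frac{p_1^2}{16\pi^2}
=\frac{1}{12\pi}\left(p_2-v\,p_1^2\right),\qquad v=\frac{2\pi-1}{4\pi}=\frac12-\frac{1}{4\pi}.
\]
Hence
\[
|\gamma_2|-|\gamma_1|=\frac{1}{12\pi}\left(|p_2-vp_1^2|-3|p_1|\right)=\frac{1}{12\pi}\,\Phi(p_1,p_2),
\]
with $K=-v$, $L=1$, $J=3$ in Lemma~\ref{l2.6}.

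\textbf{Step 2: upper bound.} Here $|2K+L|=|1-2v|=\frac{1}{2\pi}$, which is smaller than $|L|+J=4$. So the first part of Lemma~\ref{l2.6} gives $\Phi\le 2|L|=2$, and therefore $|\gamma_2|-|\gamma_1|\le\frac{1}{6\pi}$.

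\textbf{Step 3: lower bound.} Compute $M=|4K+2L|=|2-4v|=\frac1\pi$. Then $M+2|L|=2+\frac1\pi\approx 2.318\le J=3$, so the first case of the second part of Lemma~\ref{l2.6} applies. It gives $-\Phi\le 2J-M=6-\frac1\pi$, and hence
\[
|\gamma_2|-|\gamma_1|\ge-\frac{1}{12\pi}\left(6-\frac1\pi\right)=-\frac{1}{2\pi}\left(1-\frac{1}{6\pi}\right).
\]
The main point needing care is this case selection: verify $J\ge M+2|L|$ exactly rather than numerically. It holds because $1+\frac1\pi\le 3$ is equivalent to $\pi\ge\frac12$.

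\textbf{Step 4: sharpness.} For the upper bound I use $f_2$ from \eqref{f_2}. It has $p_1=0$, $p_2=2$, so $\gamma_1=0$ and $|\gamma_2|=\frac1{6\pi}$.

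For the lower bound I use $f_1$ from \eqref{f_1}. It has $p_1=p_2=2$, giving $\gamma_1=\frac{1}{2\pi}$ and
\[
\gamma_2=\frac{1}{12\pi}(2-4v)=\frac{1}{12\pi^2}.
\]
Then $|\gamma_2|-|\gamma_1|=\frac{1}{12\pi^2}-\frac{1}{2\pi}=-\frac{1}{2\pi}\left(1-\frac{1}{6\pi}\right)$, which is exactly the lower bound.
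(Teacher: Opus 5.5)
Your proof is correct and is essentially the paper's argument: the same reduction of $|\gamma_2|-|\gamma_1|$ to $\Phi(p_1,p_2)$ in Lemma~\ref{l2.6}, the same case selections, and the same extremal functions $f_2$ and $f_1$. Your $K,L,J$ are the paper's multiplied by $12\pi$, which is harmless because the lemma's case conditions and bounds are homogeneous. One small slip: the exact form of $J\ge M+2|L|$ is $2+\frac1\pi\le 3$, i.e.\ $\pi\ge 1$, not $1+\frac1\pi\le 3$; your numerical check and the conclusion are unaffected.
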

     \begin{proof}
         In view of \eqref{e1.3} and using
         \eqref{e3.3}--\eqref{e3.4}, we obtain  \bea \label{e3.5.1}\nonumber|\gamma_2|-|\gamma_1| &=&\bigg| \frac{a_3}{2}-\frac{a_2^2}{4}\bigg|-\bigg|\frac{a_2}{2}\bigg|\nonumber\\&=& \frac{1}{12\pi}\left|p_2-\left(\frac{2\pi-1}{4\pi}\right)p_1^2\right|-\frac{1}{4\pi}|p_1|\nonumber\\&=& \left|Kp_1^{2}+Lp_2\right|-J|p_1|=\Phi\left(p_1, p_2\right),\eea 
	 			where $K=-\frac{1}{12\pi}\left(\frac{2\pi-1}{4\pi}\right)$, $L=\frac{1}{12\pi}$ and $J=\frac{1}{4\pi}$. 
	 			To visualize the behaviour of the function
	 			$\Phi(p_1,p_2)$ appearing in
	 			\eqref{e3.5.1},	its graph is shown in
	 			Figure~\ref{fig:phi_surface}.
	 			\begin{figure}[H]
	 				\centering
	 				\includegraphics[width=0.7\textwidth]{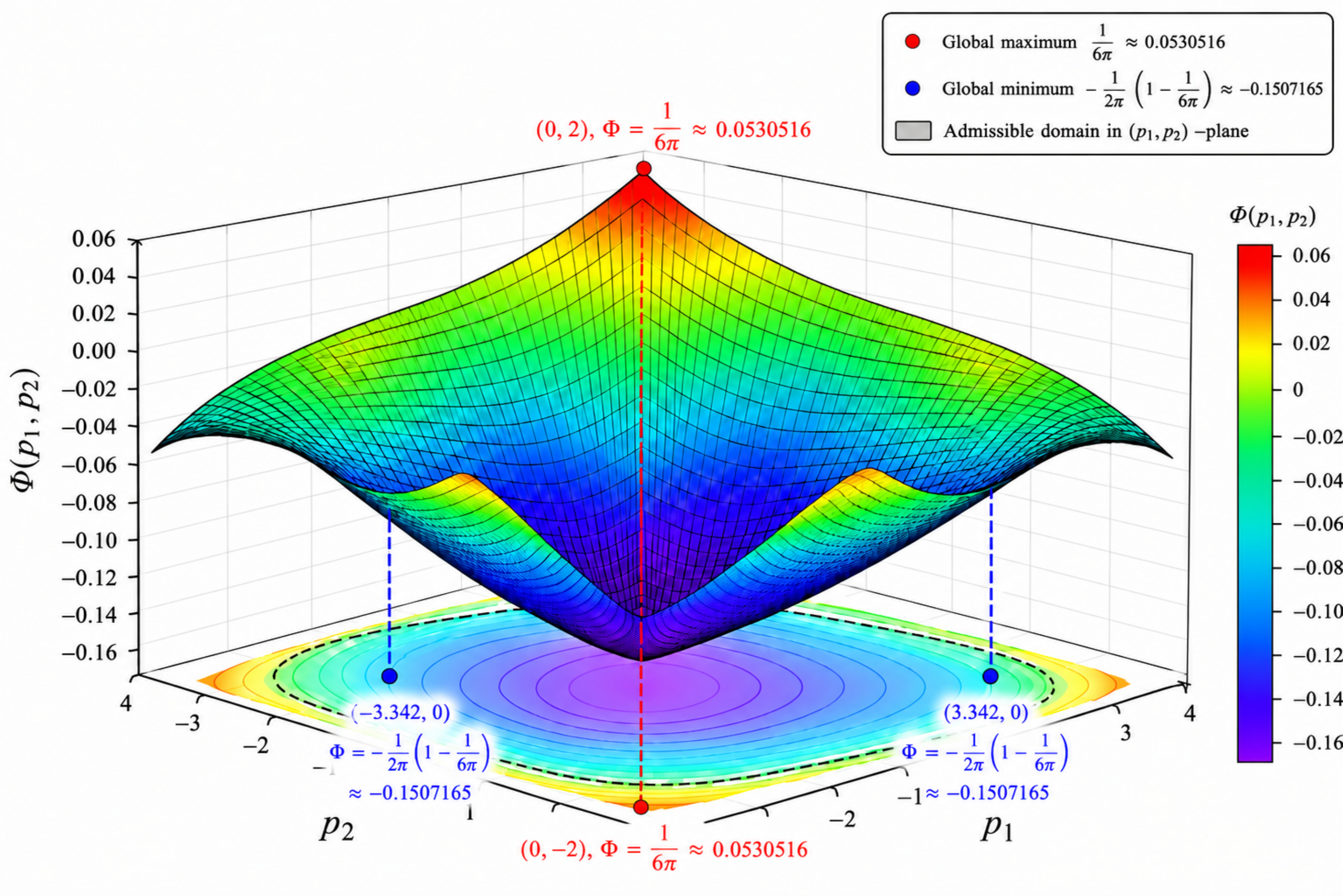}
	 					\caption{
	 					Three-dimensional coloured surface of
	 					\[
	 					\Phi(p_1,p_2)
	 					=
	 					|Kp_1^2+Lp_2|-J|p_1|,
	 					\]
	 					on the
	 					$(p_1,p_2)$-plane.
	 					The marked points indicate the extremal points at which the
	 					maximum and minimum values of
	 					$\Phi(p_1,p_2)$ are attained.
	 				}
	 				\label{fig:phi_surface}
	 				
	 			\end{figure}

	 			By a simple calculation we can see that $M=|4K+2L|=\frac{1}{12\pi^2}\approx 0.00844$, $|2K+L|=\frac{1}{24\pi^2} \approx 0.00422$ and $|L|+J=\frac{1}{3\pi} \approx 0.1061$. Since $|2K+L|<|L|+J$, then by Lemma \ref{l2.6} in (\ref{e3.5.1}) we obtain  \beas|\gamma_2|-|\gamma_1|\le 2|L| = \frac{1}{6\pi}. \eeas
                For sharpness of the upper bound we choose the extremal function $f_2$ defined by \eqref{f_2}.
                \smallskip
                                    
                Indeed, $$J-(M+2|L|)                =
                \frac1{4\pi}            -
                \left(
                \frac1{12\pi^2}+\frac1{6\pi}               \right)=               \frac{\pi-2}{12\pi^2}>0.$$
 
                Hence
                $J\ge M+2|L|$. $J\ge M+2|L|$. So by Lemma \ref{l2.6} in (\ref{e3.5.1}) we derive
                \bea\label{e3.5.2}-\Phi(p_1,p_2)=-(|\gamma_2|-|\gamma_1|) \le 2J-M=\left(\frac{1}{2\pi}-\frac{1}{12\pi^2}\right) .\eea
                Hence, the desired lower bound 
                \beas |\gamma_2|-|\gamma_1|\ge -\frac{1}{2\pi}\left(1-\frac{1}{6\pi}\right). \eeas
                 For sharpness we employ the function $f_1$ introduced by \eqref{f_1}.
     \end{proof}
     \begin{theo}\label{t3.5}
	 		Let $f(z)=z+a_2z^2+a_3z^3+\cdots\in \mathcal{C}_{\arcsin}$ and let $\Gamma_1, \ \Gamma_2$ be given by \eqref{e1.4}. Then
	 		\beas
	 		-  \left(\frac{1}{6\pi}+\frac{3}{4(5+2\pi)}\right) \le \ |\Gamma_2|-|\Gamma_1| \ \le \ \frac{1}{6\pi}.\eeas
	 		Both inequalities are sharp.
	 	\end{theo}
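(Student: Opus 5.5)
The plan is to follow the proof of Theorem \ref{t3.4} step by step: write $|\Gamma_2|-|\Gamma_1|$ in the form $\Phi(p_1,p_2)$ of Lemma \ref{l2.6}, and read off both bounds from that lemma. From \eqref{e1.4} and \eqref{e3.3} we have $\Gamma_1=-\frac{1}{4\pi}p_1$. The computation in part \textbf{B} of Theorem \ref{t3.3} gives
\[
\Gamma_2=-\frac{1}{12\pi}\left(p_2-\frac{2\pi+5}{4\pi}p_1^2\right).
\]
Hence
\[
|\Gamma_2|-|\Gamma_1|=\left|Kp_1^2+Lp_2\right|-J|p_1|,\qquad K=-\frac{2\pi+5}{48\pi^2},\quad L=\frac{1}{12\pi},\quad J=\frac{1}{4\pi}.
\]
The constants needed in Lemma \ref{l2.6} are
\[
4K+2L=-\frac{5}{12\pi^2},\qquad M=|4K+2L|=\frac{5}{12\pi^2},\qquad 2K+L=-\frac{5}{24\pi^2}.
\]

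\textbf{Upper bound.} Since $|2K+L|=\frac{5}{24\pi^2}<|L|+J=\frac{1}{3\pi}$, the first part of Lemma \ref{l2.6} gives $|\Gamma_2|-|\Gamma_1|\le 2|L|=\frac{1}{6\pi}$. Equality is attained by $f_2$ from \eqref{f_2}: there $a_2=0$ and $a_3=\frac{1}{3\pi}$, so $\Gamma_1=0$ and $|\Gamma_2|=\frac{1}{6\pi}$.

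\textbf{Lower bound.} Here we check which case of the second part of Lemma \ref{l2.6} applies.
\begin{itemize}
\item We have $M+2|L|=\frac{5+2\pi}{12\pi^2}$ and $J=\frac{3\pi}{12\pi^2}$. Since $3\pi<5+2\pi$, we get $J<M+2|L|$, so the first case fails.
\item We have $J^2=\frac{1}{16\pi^2}$ and $2|L|(M+2|L|)=\frac{5+2\pi}{72\pi^3}$. Since $\frac{5+2\pi}{72\pi}\approx 0.05<\frac{1}{16}$, we get $J^2>2|L|(M+2|L|)$, so the second case fails.
\end{itemize}
We are therefore in the third case, which gives
\[
-\Phi\le 2|L|+\frac{J^2}{M+2|L|}=\frac{1}{6\pi}+\frac{1}{16\pi^2}\cdot\frac{12\pi^2}{5+2\pi}=\frac{1}{6\pi}+\frac{3}{4(5+2\pi)}.
\]
This is exactly the stated lower bound.

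\textbf{Sharpness of the lower bound.} This is the main obstacle, since neither $f_1$ nor $f_2$ is extremal here. We take the Carath\'eodory function
\[
p_c(z)=\frac{1-z^2}{1-cz+z^2},\qquad 0\le c\le 2,
\]
which lies in $\mathcal P$ and satisfies $p_1=c$, $p_2=c^2-2$. For this choice $Kp_1^2+Lp_2=(K+L)c^2-2L$, with $K+L=\frac{2\pi-5}{48\pi^2}>0$. The quantity $(K+L)c^2-2L$ is negative on the relevant range of $c$, so
\[
\Phi=2L-(K+L)c^2-Jc .
\]
This is a concave quadratic in $c$. I would minimise $|\Gamma_2|-|\Gamma_1|$ along this family by choosing $c$ to solve the corresponding optimisation, and verify that the resulting value equals $-\bigl(\frac{1}{6\pi}+\frac{3}{4(5+2\pi)}\bigr)$.

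If this two-parameter family does not reach the bound, the fallback is to recover the extremal $(p_1,p_2)$ from the equality case in the proof of Lemma \ref{l2.6}. That case corresponds to $p_1=c_0$ with $c_0=\frac{2J}{M+2|L|}$ and $p_2$ on the boundary of the Carath\'eodory region. We would then realise that pair by the function of Lemma \ref{l2.4} with $u_1=c_0/2$ and $|u_2|=1$. With the extremal $p$ in hand, $\omega=(p-1)/(p+1)$, and the extremal $f\in\mathcal C_{\arcsin}$ is obtained by integrating \eqref{e3.1}.
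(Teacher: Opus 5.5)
Your derivation of both bounds follows the paper's proof exactly: the same $K,L,J,M$, the same case checks in Lemma~\ref{l2.6}, and $f_2$ for the upper bound. The gap is in the sharpness of the lower bound.

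Your family $p_c(z)=\frac{1-z^2}{1-cz+z^2}$ has, in the notation of Lemma~\ref{l2.4}, $u_1=c/2$ and $u_2=-1$. Since $K+L=\frac{2\pi-5}{48\pi^2}>0$ and $J>0$, the function $\Phi(c)=2L-(K+L)c^2-Jc$ is strictly decreasing on $[0,2]$. So your optimisation ends at $c=2$, i.e.\ at $f_1$, with value
\[
M-2J=-\left(\frac{1}{2\pi}-\frac{5}{12\pi^2}\right)\approx-0.1169 .
\]
This is strictly above the claimed $-\left(\frac{1}{6\pi}+\frac{3}{4(5+2\pi)}\right)\approx-0.1195$. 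The verification you anticipate is therefore false, not merely unchecked.

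The structural reason is this. For $u_1=t\in[0,1]$ one has $Kp_1^2+Lp_2=-Mt^2+2L(1-t^2)u_2$. The choice $u_2=-1$ maximises the modulus of this term, whereas an extremal must minimise it. Your fallback has the correct $u_1=c_0/2=J/(M+2|L|)$, but requiring only $|u_2|=1$ is not enough: $u_2=-1$ just reproduces the failing family. You need $u_2=+1$, i.e.\ $p_2=2$.

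Concretely, put $t_0=\frac{3\pi}{5+2\pi}\in(0,1)$ and take
\[
p(z)=\frac{1+2t_0z+z^2}{1-z^2}=\frac{1+t_0}{2}\,\frac{1+z}{1-z}+\frac{1-t_0}{2}\,\frac{1-z}{1+z}\in\mathcal P,\qquad \omega(z)=z\,\frac{z+t_0}{1+t_0z}.
\]
Then $p_1=\frac{6\pi}{5+2\pi}$ and $p_2=2$, so $Kp_1^2+Lp_2=\frac{1}{6\pi}-\frac{3}{4(5+2\pi)}<0$ and $Jp_1=\frac{3}{2(5+2\pi)}$. This gives $\Phi=-\frac{1}{6\pi}-\frac{3}{4(5+2\pi)}$, and the $f$ obtained from \eqref{e3.1} with this $\omega$ is extremal.

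This matches the paper's choice $\tau_2=1$ in Lemma~\ref{l2.4}. However, the paper's printed value $\tau_1=\frac{3\pi}{4(5+2\pi)}$ carries a spurious factor $4$: with it one gets $\Phi>0$. The correct value is $\tau_1=\frac{3\pi}{5+2\pi}$.
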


        \begin{proof}
            In view of (\ref{e1.4}) using (\ref{e3.3})--(\ref{e3.4}) we have \bea \label{e3.8.1}\nonumber|\Gamma_2|-|\Gamma_1| &=&\bigg|\frac{a_3}{2}-\frac{3a_2^2}{4}\bigg|-\bigg|\frac{a_2}{2}\bigg|\nonumber\\ &=& \frac{1}{12\pi}\left|p_2-\left(\frac{2\pi+5}{4\pi}\right)p_1^2\right|-\frac{1}{4\pi}|p_1|\nonumber\\&=& \left|Kp_1^{2}+Lp_2\right|-J|p_1|=\Phi\left(p_1, p_2\right),\eea 
	 			where $K=-\frac{1}{12\pi}\left(\frac{2\pi+5}{4\pi}\right)$, $L=\frac{1}{12\pi}$ and $J=\frac{1}{4\pi}$. Clearly, $M=|4K+2L|=\frac{5}{12\pi^2}\approx 0.04222$, $|2K+L|=\frac{5}{24\pi^2} \approx 0.02110$ and $|L|+J=\frac{1}{3\pi} \approx 0.1061$. Furthermore, $|2K+L|<|L|+J$, then by Lemma \ref{l2.6} in (\ref{e3.8.1}) we get  
                \beas|\Gamma_2|-|\Gamma_1|\le 2|L| = \frac{1}{6\pi}.\eeas
                To see the inequality is sharp we employ the extremal function $f_2$ introduced by \eqref{f_2}.
                \smallskip
                
                For the Lower bound it is easy to observe that $J\not\ge M+2|L|$ and $J^2\not\le2| L|\left(M+2| L|\right)$. Applying Lemma~\ref{l2.6} to
                \eqref{e3.8.1}, we obtain 
                \bea\label{e3.8.2}-\Phi(p_1,p_2)=-(|\Gamma_2|-|\Gamma_1|) \le 2|L|+\dfrac{J^{2}}{M+2|L|}=\left(\frac{1}{6\pi}+\frac{3}{4(5+2\pi)}\right) .\eea 
                For sharpness of the inequality \eqref{e3.8.2} we choose $f_4\in \mathcal{C}_{\arcsin}$ with $$p(z) = \frac{1+( \tau_1 \tau_2+\tau_1)z + \tau_2 z^2}{1+(\tau_1 \tau_2 - \tau_1)z - \tau_2 z^2},$$ where $\tau_1=\frac{3\pi}{4(5+2\pi)}$ and $\tau_2=1$. Then $p_1=2\tau_1=\frac{6\pi}{4(5+2\pi)}$ and $p_2=2\tau_1^2+2\left(1-\tau_1^2\right)\tau_2=2$. Now, from (\ref{e3.8.1}) we obtain 
                \beas |\Gamma_2|-|\Gamma_1|= \Phi(p_1, p_2)=|Kp_1^2+Lp_2|-| Jp_1|=-\left(\frac{1}{6\pi}+\frac{3}{4(5+2\pi)}\right).\eeas
                Hence, this completes the proof of this theorem.
        \end{proof}
        \subsection{\bf Hankel determinants}

    \begin{theo}\label{t3.6}
			Let $f(z)=z+a_2z^2+a_3z^3+\cdots\in\mathcal{C}_{\arcsin} $. Then the second order Hankel determinant
		$$
		|H_{2, 2}(f)|\le \frac{1}{9\pi^2}.
		$$
        The inequality is sharp.
	\end{theo}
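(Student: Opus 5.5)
The plan is to rewrite $H_{2,2}(f)=a_2a_4-a_3^2$ in terms of the Carath\'eodory coefficients of the function $p$ attached to $f$ in \eqref{e3.2}, and then pass to the parameters $u_1,u_2,u_3$ of Lemma~\ref{l2.4}. Substituting \eqref{e3.3}--\eqref{e3.5} and clearing the common factor $1/(72\pi^2)$ gives
\[
H_{2,2}(f)=\frac{1}{72\pi^2}\Big(3p_1p_3-3\beta p_1^2p_2+3\delta p_1^4-2\big(p_2-c\,p_1^2\big)^2\Big),
\]
with
\[
c=\frac{\pi-2}{2\pi},\qquad \beta=\frac{2\pi-3}{2\pi},\qquad \delta=\frac{7\pi^2-18\pi+12}{24\pi^2}.
\]
The functional is rotation invariant, since $a_n\mapsto e^{-i\theta}\,\overline{\ }\!$-type rotations multiply $H_{2,2}$ by a unimodular factor. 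So we may assume $p_1=2u_1$ with $u_1\in[0,1]$. We then insert \eqref{e2.2}--\eqref{e2.4}, which writes $72\pi^2 H_{2,2}(f)$ as a polynomial in $u_1,u_2,u_3$ that is linear in $u_3$. The $u_3$-coefficient is a constant multiple of $u_1(1-u_1^2)(1-|u_2|^2)$, and the pure $u_2^2$-term is $-8(1-u_1^2)^2u_2^2$.

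Next we treat the boundary values of $u_1$. For $u_1=0$ we have $p_1=0$ and $p_2=2u_2$, hence $H_{2,2}(f)=-a_3^2=-u_2^2/(9\pi^2)$ and so $|H_{2,2}(f)|\le 1/(9\pi^2)$. This case also supplies the extremal function: $u_2=1$ corresponds to $p(z)=\frac{1+z^2}{1-z^2}$, that is, $\omega(z)=z^2$ and the function $f_2$ of \eqref{f_2}. Since $f_2$ has $a_2=a_4=0$ and $a_3=\frac{1}{3\pi}$, it gives equality. For $u_1=1$ the expression reduces to an explicit number in $\pi$, coming only from the $p_1^4$ terms, and we will check numerically that it is well below $1/(9\pi^2)$.

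For $0<u_1<1$ we apply the triangle inequality with $|u_3|\le1$. Writing $x=|u_2|\in[0,1]$ and $t=u_1$, this gives $|H_{2,2}(f)|\le \frac{1}{72\pi^2}G(t,x)$, where
\[
G(t,x)=a(t)+b(t)\,x+8(1-t^2)^2x^2+k\,t(1-t^2)(1-x^2).
\]
Here $a(t)\sim t^4$ collects the $u_1^4$ terms, $b(t)\sim t^2(1-t^2)$ collects the mixed $u_1^2u_2$ terms and $u_1^2u_2^2$ corrections (taken in absolute value), and $k>0$ is an explicit constant. Alternatively, one could first factor out $t(1-t^2)$ and use Lemma~\ref{l2.5}. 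However, the $-8(1-t^2)^2u_2^2$ term does not carry the factor $t$, so the direct two-variable maximization seems cleaner.

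The remaining task is to show $\max_{[0,1]^2}G\le 8$, which is the main obstacle. Since $G$ is a quadratic in $x$, I would first locate its maximum in $x$ for fixed $t$. The $x^2$-coefficient is $8(1-t^2)^2-k\,t(1-t^2)$. Where this is positive the maximum is at $x=1$, and there
\[
G(t,1)=a(t)+b(t)+8(1-t^2)^2,
\]
which should decrease from $8$ at $t=0$ because the leading correction is $-16t^2+O(t^2)$ with small constants of the form $1/\pi$. Where the coefficient is nonpositive, we take the vertex or the endpoint $x=0$ and bound the resulting one-variable function of $t$ directly. I expect these one-variable polynomial inequalities in $t$, with coefficients involving $\pi$, to require careful but elementary estimates, for example by checking derivative signs. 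If the cruder triangle-inequality bound fails near some intermediate $t$, I would instead keep the $u_2$-terms together and invoke Lemma~\ref{l2.5} for that range.
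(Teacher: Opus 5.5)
Your argument is correct, and it differs from the paper's after the common first step. Both reach the same $u_1,u_2,u_3$ expression: your $72\pi^2H_{2,2}(f)$ is $12$ times the bracket in \eqref{e3.8}. The paper then factors out $u_1(1-u_1^2)$ and applies the Choi--Kim--Sugawa lemma (Lemma~\ref{l2.5}) with $A_2,B_2,C_2$ from \eqref{e3.10}. Since $|C_2|=(2+u_1^2)/(3u_1)>1$ and blows up as $u_1\to0$ (exactly the obstruction you noticed), the paper has to run through every branch of the lemma, split at $u_1\approx0.458$, and check the monotonicity of $P_2$ and $\eta_1$ partly numerically. You instead apply the triangle inequality term by term. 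This costs nothing at the extremal configuration $u_1=0$, $|u_2|=1$, and leaves an elementary inequality. Lemma~\ref{l2.5} gives the exact maximum over $u_2$, and hence sharper bounds at intermediate $u_1$, but that precision is not needed because the maximum sits at $u_1=0$.

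Three points in your sketch need correcting.

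\textbf{The size of the $u_1^2u_2^2$ correction.} It is not a small constant of order $1/\pi$. It contributes $+12t^2$ to $b(t)=(12+4/\pi)t^2(1-t^2)$, so the net $t^2$-coefficient of $G(t,1)$ is $-4(1-1/\pi)$. This is still negative, so your conclusion survives.

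\textbf{The regime $t\ge\frac12$.} After using $x^2\le x$, this case is delicate. Simply dropping the negative $x^2$-term gives a bound that reaches about $7.96$ near $t\approx0.66$, so you really need the vertex computation there. Both this issue and the previous one disappear if you do not replace $x^2$ by $x$, and instead keep $\bigl(12t^2(1-t^2)+8(1-t^2)^2\bigr)x^2$ in $G$. The $x^2$-coefficient of $G$ is then $4(1-t^2)(1-t)(2-t)\ge0$, so the maximum is at $x=1$ for every $t\in[0,1]$, and
\[
G(t,1)=8-4\Bigl(1-\frac1\pi\Bigr)t^2-\Bigl(2+\frac4\pi+\frac8{\pi^2}\Bigr)t^4\le 8,
\]
with equality only at $t=0$. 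This also covers $u_1=0$ and $u_1=1$ without separate cases.

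\textbf{The normalization.} State it cleanly. The map $f\mapsto e^{-i\theta}f(e^{i\theta}z)$ preserves $\mathcal{C}_{\arcsin}$, multiplies $p_1$ by $e^{i\theta}$, and multiplies $H_{2,2}(f)$ by $e^{4i\theta}$. This is what justifies taking $u_1\in[0,1]$, which the paper uses without comment.
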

    \begin{proof}
   Since $f\in \mathcal{C}_{\arcsin}$, it follows from (\ref{e1.5}) that the second-order Hankel determinant is given by
   \bea\label{h.3} |H_{2,2}(f)|=|a_2a_4-a_3^2|.   \eea
    Applying \eqref{e3.3}--\eqref{e3.5} to \eqref{h.3} , then we have
    \bea\label{e3.7} |H_{2,2}(f)|&=&| a_2a_4-a_3^2|\nonumber\\&=& \frac{1}{24\pi^2}\left| p_1p_3-\left(\frac{2\pi-1}{6\pi}\right)p_1^2p_2-\frac{2}{3}p_2^2+\left(\frac{3\pi^2-2\pi-4}{24\pi^2}\right)p_1^4 \right|.   \eea
   Substituting
		(\ref{e2.2})--(\ref{e2.4}) into (\ref{e3.7}), we get the expression
		\bea\label{e3.8}
		 |H_{2,2}(f)|&=&\frac{1}{6\pi^2}\bigg| \frac{(\pi^2-4)}{6\pi^2}u_1^4+\frac{1}{3\pi}(1-u_1^2)u_1^2u_2-(1-u_1^2)\frac{(2+u_1^2)}{3}u_2^2\nonumber\\&&+(1-u_1^2)(1-|u_2|^2)u_1u_3\bigg|.
		\eea
        Since $u_1\in[0, 1]$, using Lemma~\ref{l2.4} in (\ref{e3.8}) it follows that:\\
        {\bf\underline{Case-1.}} Let us choose $u_1=0$, then from (\ref{e3.8}) we obtain
        \beas |H_{2,2}(f)|=\frac{1}{9\pi^2}|u_2|^2\le \frac{1}{9\pi^2}\approx 0.01126.\eeas\\
        {\bf\underline{Case-2.}} Let us choose $u_1=1$, then from (\ref{e3.8}) we obtain
        \beas |H_{2,2}(f)|=\frac{(\pi^2-4)}{36\pi^4}\approx 0.00167.
        \eeas\\
        {\bf\underline{Case-3.}} Let us choose $0<u_1<1$. Now, applying the triangle inequality to (\ref{e3.8}), we obtain
        \bea\label{e3.9} 
		|H_{2,2}(f)|&\leq&\frac{1}{6\pi^2}\bigg(\left| \frac{(\pi^2-4)}{6\pi^2}u_1^4+\frac{1}{3\pi}(1-u_1^2)u_1^2u_2-(1-u_1^2)\frac{(2+u_1^2)}{3}u_2^2\right|\nonumber\\&&+\left|(1-u_1^2)(1-| u_2|^2)u_1u_3\right|\bigg)\nonumber\\&\le&\frac{1}{6\pi^2}u_1(1-u_1^2)\left(\left| \frac{(\pi^2-4)u_1^3}{6\pi^2(1-u_1^2)}+\frac{1}{3\pi}u_1u_2-\left(\frac{2+u_1^2}{3u_1}\right)u_2^2\right|+(1-|u_2|^2)\right)\nonumber\\&\le& \frac{1}{6\pi^2}| u_1\left(1-u_1^2\right)|\Psi\left(A_2, B_2, C_2\right),
		\eea
        where $\Psi\left(A_2, B_2, C_2\right)=| A_2+B_2u_2+C_2u_2^2|+1-|u_2|^2$ with 
        \bea\label{e3.10} A_2=\frac{(\pi^2-4)u_1^3}{6\pi^2(1-u_1^2)}, \  B_2=\frac{u_1}{3\pi} \ \text{and} \ C_2=-\frac{2+u_1^2}{3u_1}.\eea
		\smallskip
		According to the different choices of $A_2$, $B_2$, and $C_2$, we now examine the following cases under Lemma \ref{l2.5}. Consequently, $A_2C_2<0$ for all $0<u_1<1$. Thus, the following subcases arise:\\
       {\bf\underline{Subcase-3.1.}}\label{sc3.1}  Let us consider $-4A_2C_2\left(C_2^{-2}-1\right)<B_2^2$ and $|B_2| < 2(1-|C_2|)$.
	 	Now, from (\ref{e3.9}) we deduce that
	 	\beas |B_2|-2\left(1-|C_2|\right)=\frac{1}{3\pi u_1}\left((2\pi+1)u_1^2-6\pi u_1+4\pi\right)>0, \ \text{for} \ u_1\in(0, 1). \eeas
	 	Therefore, by Lemma \ref{l2.5} we get
	 	\beas \Psi(A_2, B_2, C_2)\neq1-| A_2|+\frac{B_2^2}{4\left(1-|C_2|\right)}.\eeas\\
        {\bf\underline{Subcase-3.2.}}\label{sc3.2} Let us consider $B_2^2<\min\left\{4(1+|C_2|)^2, -4A_2C_2   (C_2^{-2}-1)\right\}$. We conclude from (\ref{e3.10}), that 
         \beas
		\left(\frac{1}{C_2^2}-1\right)=-\frac{(4-u_1^2)(1-u_1^2)}{\left(2+u_1^2\right)^2}<0, \ \text{for} \ 0<u_1<1.
		\eeas
	 	Therefore, $-4A_2C_2\left({C_2^{-2}}-1\right)<0$ for all $0<u_1<1$. So, it follows that 
	 	\beas\min\left\{4(1+|C_2|)^2, -4A_2C_2(C_2^{-2}-1)\right\}=-4A_2C_2\left({C_2^{-2}}-1\right).\eeas
	 	Hence, it is clear that $B_2^2\not<\min\left\{4(1+|C_2|)^2, -4A_2C_2(C_2^{-2}-1)\right\}$.\\
	 	Therefore, by Lemma \ref{l2.5} it is clear that 
	 	\beas \Psi(A_2, B_2, C_2)\neq1+| A_2|+\frac{B_2^2}{4\left(1+|C_2|\right)}.\eeas\\
        {\bf\underline{Subcase-3.3.}}\label{sc3.3}  Let us consider $|A_2B_2|\ge|C_2| \left(|B_2|+4|A_2|\right)$. Since $0<u_1<1$, then from (\ref{e3.10}) we have
	 	\beas &&|A_2B_2|-|C_2|\left(|B_2| +4| A_2|\right)\\&&=\frac{1}{18\pi^3\left(1-u_1^2\right)}\big( (-4\pi^3+3\pi^2+16\pi-4)u_1^4-2\pi(4\pi^2-\pi-16)u_1^2-4\pi^2\big)<0.\eeas
	 	Therefore, by Lemma \ref{l2.5} we have
	 	\beas \Psi(A_2, B_2, C_2)\neq|A_2|+|B_2|-|C_2|.\eeas\\
       {\bf\underline{Subcase-3.4.}}\label{sc3.4} Let us consider $|A_2B_2|\le|C_2| \left(|B_2|-4| A_2|\right)$. So, from (\ref{e3.10}) we have
	 	\beas &&|A_2B_2|-|C_2| \left(|B_2|-4| A_2|\right)\\&&=\frac{1}{18\pi^3\left(1-u_1^2\right)}\big( (4\pi^3+3\pi^2-16\pi-4)u_1^4+2\pi(4\pi^2+\pi-16)u_1^2-4\pi^2\big).\eeas
	 	Let us choose $x=u_1^2$ and $P_1(x)=(4\pi^3+3\pi^2-16\pi-4)x^2+2\pi(4\pi^2+\pi-16)x-4\pi^2$.
The quadratic polynomial $P_1(x)$ has the unique positive root $x_0\approx0.20987$ (see Figure~\ref{Fig:P1}).
Since its leading coefficient is positive,
it follows that $$P_1(x)\le0,\qquad
0\le x\le x_0\approx0.20987.$$ 
	
\begin{figure}[H]
	\centering
	\begin{tikzpicture}
		
		\begin{axis}[
			width=12.5cm,
			height=7.5cm,
			xmin=0.15,
			xmax=0.27,
			ymin=-7,
			ymax=7,
			xlabel={$x=u_1^2$},
			ylabel={$P_1(x)$},
			axis lines=left,
			axis on top,
			axis line style={
				black,
				line width=1.8pt,
				-{Latex[length=3.5mm,width=2.2mm]}
			},
			tick style={
				black,
				line width=1.2pt
			},
			major tick length=3mm,
			minor tick length=1.8mm,
		major grid style={
			gray!45,
			dashed
		},
		minor grid style={
			gray!20
		},
			grid=both,
			minor tick num=1,
			samples=500,
			domain=0.15:0.27,
			legend style={
				draw=black,
				fill=white,
				font=\small,
				at={(0.97,0.97)},
				anchor=north east
			},
			]

			\addplot[
			draw=none,
			name path=curveleft,
			domain=0.15:0.209868,
			samples=400
			]
			{
				(4*pi^3+3*pi^2-16*pi-4)*x^2
				+2*pi*(4*pi^2+pi-16)*x
				-4*pi^2
			};
			
			\addplot[
			draw=none,
			name path=xaxisleft,
			domain=0.15:0.209868
			]
			{0};
			
			\addplot[
			red!20
			]
			fill between[
			of=curveleft and xaxisleft
			];

			\addplot[
			draw=none,
			name path=curveright,
			domain=0.209868:0.27,
			samples=400
			]
			{
				(4*pi^3+3*pi^2-16*pi-4)*x^2
				+2*pi*(4*pi^2+pi-16)*x
				-4*pi^2
			};
			
			\addplot[
			draw=none,
			name path=xaxisright,
			domain=0.209868:0.27
			]
			{0};
			
			\addplot[
			green!20
			]
			fill between[
			of=curveright and xaxisright
			];

			\addplot[
			blue,
			very thick
			]
			{
				(4*pi^3+3*pi^2-16*pi-4)*x^2
				+2*pi*(4*pi^2+pi-16)*x
				-4*pi^2
			};
			
			\addlegendentry{$P_1(x)$}

			\addplot[
			black,
			thick
			]
			coordinates
			{
				(0.15,0)
				(0.27,0)
			};

		\addplot[
		red!70!,
		densely dashed,
		very thick,
		line width=2.8pt
		]
		coordinates
		{
			(0.209868,-8)
			(0.209868,8)
		};
			coordinates
			{
				(0.209868,-8)
				(0.209868,8)
			};
			
			\addplot[
			only marks,
			mark=*,
			mark size=3.8pt,
			fill=yellow,
			draw=red,
			line width=0.8pt
			]
			coordinates
			{
				(0.209868,0)
			};
			
			\node[
			fill=yellow!15,
			draw=orange!70!black,
			rounded corners=2pt,
			font=\footnotesize
			]
			at (axis cs:0.22387,-1.5)
			{$\boxed{x_0\approx0.20987}$};

			\node[
			font=\small,
			fill=white,
			draw=red!70!black,
			rounded corners=2pt
			]
			at (axis cs:0.165,-4.8)
			{$P_1(x)<0$};

			\node[
			font=\small,
			fill=white,
			draw=green!60!black,
			rounded corners=2pt
			]
			at (axis cs:0.255,4.1)
			{$P_1(x)>0$};
			
		\end{axis}
	\end{tikzpicture}
	
	\caption{
	Graph of $P_1(x)$. The unique positive zero is
		$x\approx0.20987$, implying
		$P_1(x)\le0$ for $0\le x\le0.20987$, as used in Subcase~3.4.
	}
	
	\label{Fig:P1}
	
\end{figure}
	
	Then, for $0<u_1=\sqrt{x_0}\le0.45812,$
	we obtain $|A_2B_2|\le|C_2| \left(|B_2|-4| A_2|\right)$.\par 
	 	Hence, using Lemma \ref{l2.5} and from (\ref{e3.9}) we obtain
	 	\bea\label{e3.11} |H_{2,2}(f)| &\le&\frac{1}{6\pi^2}u_1\left(1-u_1^2\right)\Psi(A_2, B_2, C_2)\nonumber\\&\le&\frac{1}{36\pi^4}\left(-(3\pi^2+2\pi-4)u_1^4-2\pi(\pi-1)u_1^2+4\pi^2\right)=\frac{1}{36\pi^4}P_2(u_1),  \eea
	 	where $P_2(u_1)=-(3\pi^2+2\pi-4)u_1^4-2\pi(\pi-1)u_1^2+4\pi^2$. 
	 	It is easy to deduce for $0<u_1\le 0.45821$, $P_2(u_1)$ is monotone decreasing (see Figure~\ref{Fig:P2}). 
	 	\begin{figure}[H]
	 		\centering
	 		\begin{tikzpicture}
	 			\begin{axis}[
	 				width=12cm,
	 				height=8cm,
	 				xmin=0,
	 				xmax=0.48,
	 				ymin=34,
	 				ymax=40.6,
	 				xlabel={$u_1$},
	 				ylabel={$P_2(u_1)$},
	 				axis lines=left,
	 				axis line style={very thick,-latex},
	 				tick style={very thick},
	 				grid=major,
	 				major grid style={gray!20},
	 				minor tick num=1,
	 				samples=400,
	 				domain=0:0.45821,
	 				smooth,
	 				clip=true,
	 				legend style={
	 					at={(0.75,0.70)},
	 					anchor=north west
	 				}
	 				]

	 				\addplot[
	 				ultra thick,
	 				blue!80!black
	 				]
	 				{
	 					-(3*pi^2+2*pi-4)*x^4
	 					-2*pi*(pi-1)*x^2
	 					+4*pi^2
	 				};
	 				
	 				\addlegendentry{$P_2(u_1)$}

	 				\addplot[
	 				red,
	 				densely dashed,
	 				very thick
	 				]
	 				coordinates
	 				{
	 					(0.45821,34)
	 					(0.45821,40)
	 				};
	 				
	 				\node[
	 				fill=white,
	 				font=\small,
	 				anchor=south east
	 				]
	 				at (axis cs:0.45821,37.4)
	 				{$u_1=0.45821$};

	 			\node[
	 			draw,
	 			rounded corners=2pt,
	 			fill=yellow!20,
	 			font=\small
	 			]
	 			at (axis cs:0.220,39.5)
	 			{Monotone decreasing};
	 			
	 			\draw[-latex,very thick,red]
	 			(axis cs:0.220,39.18)
	 			--
	 			(axis cs:0.355,37.75);

	 				\addplot[
	 				only marks,
	 				mark=*,
	 				mark size=2.5pt,
	 				black
	 				]
	 				coordinates
	 				{
	 					(0,39.4784)
	 				};
	 				
	 				\node[
	 				font=\small,
	 				anchor=south west
	 				]
	 				at (axis cs:0.01,39.48)
	 				{$P_2(0)=4\pi^2$};
	 				
	 			\end{axis}
	 		\end{tikzpicture}
	 		
	 		\caption{Graph of $	 			P_2(u_1)=-(3\pi^2+2\pi-4)u_1^4-2\pi(\pi-1)u_1^2+4\pi^2,$ for $0\le u_1\le0.45821$. The function is monotone decreasing, hence $P_2(u_1)\le P_2(0)=4\pi^2,$	 	which is used in \eqref{e3.11}.}
	 		\label{Fig:P2}
	 	\end{figure}
	 	Hence, from (\ref{e3.11}) we obtain
	 	\beas |H_{2,2}(f)| \le \frac{1}{36\pi^4}P_2(0)=\frac{1}{9\pi^2}\approx 0.01126.\eeas\\
	 	{\bf\underline{Subcase-3.5.}}\label{sc3.5} Let us consider $0.45821<u_1<1$. Using Lemma \ref{l2.5} and from (\ref{e3.9})--(\ref{e3.10}) we have
	 	\bea\label{e3.12} 
	 	|H_{2, 2}(f)|&\le&\frac{1}{6\pi^2}u_1(1-u_1^2)(|C_2|+|A_2| )\sqrt{1-\frac{B_2^2}{4A_2C_2}}\nonumber\\ &\le& \left(\frac{-(4+\pi^2)u_1^4-2\pi^2u_1^2+4\pi^2}{36\pi^4\sqrt{2\pi^2-8}}\right)\sqrt{\frac{4\pi^2-15+(2\pi^2-9)u_1^2}{2+u_1^2}}\nonumber\\ &\le&\frac{1}{36\pi^4(\sqrt{2\pi^2-8})}\eta_1(u_1),\eea
	 	where $\eta_1(u_1)=\left(-(4+\pi^2)u_1^4-2\pi^2u_1^2+4\pi^2\right)\sqrt{\frac{4\pi^2-15+(2\pi^2-9)u_1^2}{2+u_1^2}}$.
	 	 Differentiating with respect to $u_1$ yields
	 	\beas\eta_1^{\prime}(u_1)=\frac{u_1P_3(u_1)}{{\left(2+u_1^{2}\right)^2} \sqrt{\frac{4\pi^2-15+(2\pi^2-9)u_1^2}{2+u_1^2}}},\eeas
	 	where $$P_3(u_1)=(-8\pi^4+4\pi^2+144)u_1^6+(-40\pi^4+43\pi^2+540)u_1^4+(-64\pi^4+130\pi^2+480)u_1^2-32\pi^4+108\pi^2$$. Clearly, $P_3(u_1)<0$ for $0.45821<u_1<1$. Then we have $\eta_1^{\prime}(u_1)<0$. So $\eta_1(u_1)$ is monotone decreasing function (see Figure~\ref{fig:eta1}).
	  \begin{figure}[H]
	  	\centering
	  	\includegraphics[width=0.75\textwidth]{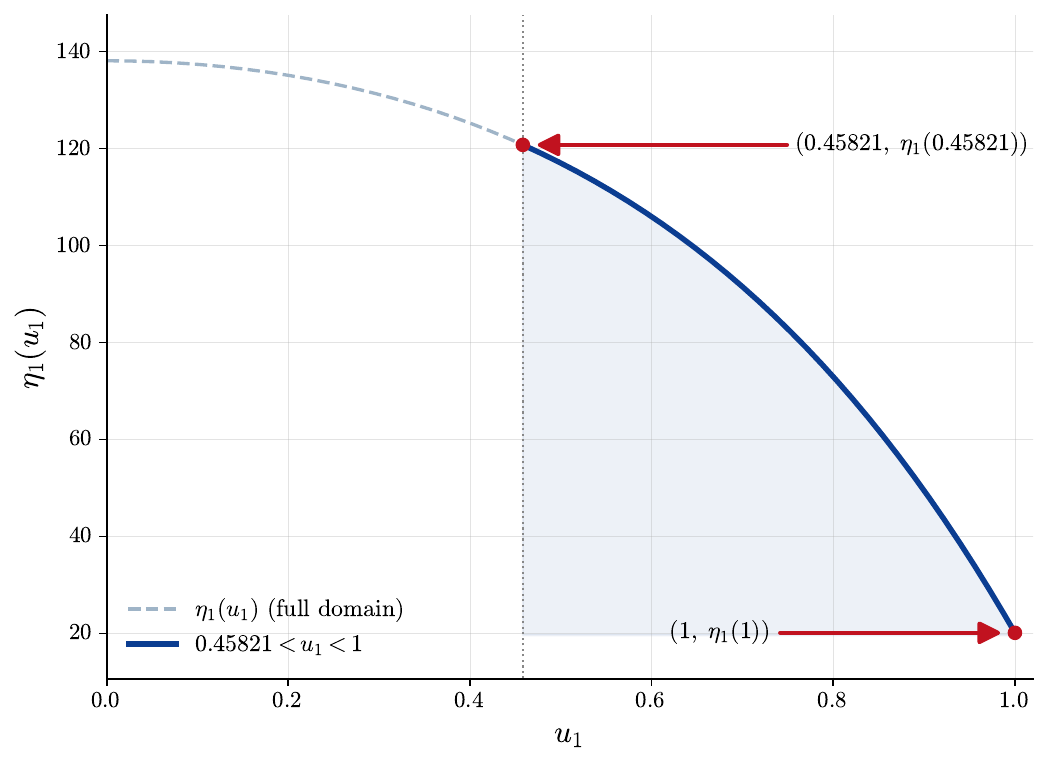}
	  	\caption{Plot of the auxiliary function $\eta_{1}(u_{1})$ on the interval $0.45821<u_{1}<1$. The graph illustrates the monotone decreasing behaviour of $\eta_{1}(u_{1})$, which is used in Subcase~3.5 to establish the estimate for $|H_{2,2}(f)|$.}
	  		\label{fig:eta1}
	  	\end{figure}
	 		 	
	 	From (\ref{e3.12}) we obtain
	 	\beas	|H_{2, 2}(f)|\le \frac{\eta_1(0.45821)}{36\pi^4\sqrt{2\pi^2-8}}\approx 0.01005. \eeas
        Comparing all the estimates obtained above, we conclude that, $$|H_{2,2}(f)|	\le
	 	\frac{1}{9\pi^2},$$
	 	which is the desired estimate.\\
	 	Equality is attained by the extremal function $f_2$ defined in \eqref{f_2}. 
	 		
    \end{proof}
\begin{theo}\label{t3.7}
	 	Let $f(z)=z+a_2z^2+a_3z^3+\cdots\in\mathcal{C}_{\arcsin} $, then
	 	\beas|H_{2, 1}(F_{f}/2)| \ \le \ \frac{1}{36\pi^2}. \eeas
	 	The inequality is sharp.
	 \end{theo}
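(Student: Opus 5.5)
The plan is to mirror the proof of Theorem~\ref{t3.6}: express $\gamma_1\gamma_3-\gamma_2^2$ through the Carath\'eodory coefficients, pass to the parameters $u_1,u_2,u_3$ of Lemma~\ref{l2.4}, and maximise with the help of Lemma~\ref{l2.5}.

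\textbf{Step 1: the functional in terms of $p_1,p_2,p_3$.} Substituting \eqref{e3.3}--\eqref{e3.5} into \eqref{e1.3} gives
\[
\gamma_1=\frac{p_1}{4\pi},\qquad \gamma_2=\frac{1}{12\pi}\Big(p_2-\tfrac{2\pi-1}{4\pi}p_1^2\Big),
\]
\[
\gamma_3=\frac{1}{24\pi}\Big(p_3-\big(1-\tfrac{1}{2\pi}\big)p_1p_2+\big(\tfrac{7}{24}-\tfrac{1}{4\pi}\big)p_1^3\Big).
\]
The first two were already used in Theorem~\ref{t3.4}; $\gamma_3$ is obtained the same way. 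Hence
\[
H_{2,1}(F_f/2)=\frac{1}{288\pi^2}\Big(3p_1p_3-3\big(1-\tfrac{1}{2\pi}\big)p_1^2p_2-2p_2^2+\kappa\, p_1^4\Big),
\]
where $\kappa$ is an explicit constant obtained after expanding $-2\big(p_2-\tfrac{2\pi-1}{4\pi}p_1^2\big)^2$.

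\textbf{Step 2: substitute Lemma~\ref{l2.4}.} Using \eqref{e2.2}--\eqref{e2.4}, rotation invariance of the class lets us take $u_1\in[0,1]$. I expect a representation of the form
\[
|H_{2,1}(F_f/2)|=\frac{1}{24\pi^2}\Big|\alpha u_1^4+\beta(1-u_1^2)u_1^2u_2-(1-u_1^2)\tfrac{(\delta+u_1^2)}{3}u_2^2+(1-u_1^2)(1-|u_2|^2)u_1u_3\Big|,
\]
with explicit constants $\alpha,\beta,\delta$ depending on $\pi$, structurally identical to \eqref{e3.8}. The factors here are a guess and must be confirmed by the computation.

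\textbf{Step 3: case analysis in $u_1$.}
\begin{itemize}
\item[(a)] $u_1=0$. Here $p_1=0$ and $p_2=2u_2$, so the expression reduces to $\frac{1}{288\pi^2}\,|2p_2^2|=\frac{1}{36\pi^2}|u_2|^2\le \frac{1}{36\pi^2}$.
\item[(b)] $u_1=1$. This gives the single constant $\frac{|\alpha|}{24\pi^2}$, which must be checked numerically to be smaller than $\frac{1}{36\pi^2}$.
\item[(c)] $0<u_1<1$. Apply the triangle inequality and $|u_3|\le1$ to bound the quantity by $\frac{1}{24\pi^2}\,u_1(1-u_1^2)\,\Psi(A,B,C)$. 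Here $A=\frac{\alpha u_1^3}{1-u_1^2}$, $B=\beta u_1$ and $C=-\frac{\delta+u_1^2}{3u_1}$.
\end{itemize}
Since $AC<0$, Lemma~\ref{l2.5}(ii) applies, and I would rule out its branches one by one as in Subcases~3.1--3.5 of Theorem~\ref{t3.6}. In the branch $-|A|+|B|+|C|$, valid on some interval $0<u_1\le u^*$, the bound is a polynomial in $u_1$ with value $\frac{1}{36\pi^2}$ at $u_1=0$. I would show it is decreasing there. On the complementary interval the bound has the form $(|A|+|C|)\sqrt{1-B^2/(4AC)}$. I would reduce it to a one-variable function $\eta(u_1)$, show $\eta'<0$ via the sign of a polynomial, and check numerically that $\eta(u^*)<\frac{1}{36\pi^2}$.

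\textbf{Step 4: sharpness.} Equality is attained by $f_2$ from \eqref{f_2}. There $a_2=a_4=0$ and $a_3=\frac{1}{3\pi}$, so $\gamma_1=0$ and $\gamma_2=\frac{1}{6\pi}$. Hence $|H_{2,1}(F_f/2)|=\gamma_2^2=\frac{1}{36\pi^2}$.

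The main obstacle is Step 3(c). It requires identifying the correct branch of Lemma~\ref{l2.5}, locating the threshold $u^*$, and proving the monotonicity of the resulting auxiliary functions. I expect this to rest on sign checks of explicit polynomials in $u_1^2$ with $\pi$-dependent coefficients, supported by numerical evaluation.
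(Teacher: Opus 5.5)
Your route is the paper's route. The paper also writes $H_{2,1}(F_f/2)=\frac14\bigl(a_2a_4-a_3^2+\frac1{12}a_2^4\bigr)$ and passes to $u_1,u_2,u_3$ via Lemma~\ref{l2.4}. It then treats $u_1=0$ and $u_1=1$ directly, handles $0<u_1<1$ with Lemma~\ref{l2.5}(ii) by referring back to Subcases~3.1--3.5 of Theorem~\ref{t3.6}, and uses $f_2$ for sharpness.

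There is, however, an algebra slip in your Step~1 that must be fixed. Expanding $-2\bigl(p_2-\frac{2\pi-1}{4\pi}p_1^2\bigr)^2$ gives not only $-2p_2^2$ and a $p_1^4$ term but also the cross term $+\frac{2\pi-1}{\pi}p_1^2p_2$. The net coefficient of $p_1^2p_2$ is therefore $-\frac{3(2\pi-1)}{2\pi}+\frac{2(2\pi-1)}{2\pi}=-\frac{2\pi-1}{2\pi}$, not $-3\bigl(1-\frac{1}{2\pi}\bigr)$. The correct identity is
\[
H_{2,1}(F_f/2)=\frac{1}{288\pi^2}\Bigl(3p_1p_3-\frac{2\pi-1}{2\pi}p_1^2p_2-2p_2^2+\frac{3\pi^2-2\pi-1}{8\pi^2}p_1^4\Bigr),
\]
which is \eqref{e3.14} multiplied out. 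The slip is not cosmetic. With your coefficient, the choice $p_1=p_2=p_3=2$ (the function $f_1$, i.e.\ $u_1=1$) gives modulus $\frac{1}{288\pi^2}\bigl(14-\frac{8}{\pi}+\frac{2}{\pi^2}\bigr)\approx 0.0041>\frac{1}{36\pi^2}$, so your Step~3(b) check would fail. The true value there is $\frac{\pi^2-1}{144\pi^4}$. With the corrected identity, your guessed shape holds with $\alpha=\frac{\pi^2-1}{6\pi^2}$, $\beta=\frac{1}{3\pi}$ and $\delta=2$, exactly as in \eqref{e3.15}.

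In Step~3(c), your plan to recompute the threshold $u^*$ is more careful than the paper, which simply declares the argument identical to Theorem~\ref{t3.6}.
\begin{itemize}
\item Since $|C_3|=\frac{2+u_1^2}{3u_1}>1$ on $(0,1)$, the first two alternatives of Lemma~\ref{l2.5}(ii) are excluded at once.
\item The condition $|A_3B_3|\le|C_3|(|B_3|-4|A_3|)$ becomes $(4\pi^3+3\pi^2-4\pi-1)x^2+(8\pi^3+2\pi^2-8\pi)x-4\pi^2\le 0$ with $x=u_1^2$. This gives $u^*\approx 0.387$, not the $0.458$ of Theorem~\ref{t3.6}.
\item On $(0,u^*]$ the bound is $\frac{1}{144\pi^4}\bigl(4\pi^2-2\pi(\pi-1)u_1^2-(3\pi^2+2\pi-1)u_1^4\bigr)\le\frac{1}{36\pi^2}$.
\item On $(u^*,1)$ the bound is
\[
\frac{1}{144\pi^4}\bigl(4\pi^2-2\pi^2u_1^2-(\pi^2+1)u_1^4\bigr)\sqrt{\frac{4\pi^2-3+(2\pi^2-3)u_1^2}{2(\pi^2-1)(2+u_1^2)}}.
\]
This is a product of two positive decreasing factors, so no derivative of a sixth-degree polynomial is needed. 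Its value at $u^*$ is about $0.0026$.
\end{itemize}
The threshold check is not vacuous. As $u_1\to 0$, this last expression tends to $\frac{1}{36\pi^2}\sqrt{\frac{4\pi^2-3}{4\pi^2-4}}>\frac{1}{36\pi^2}$, so you cannot bound it by its value at $0$.
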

     \begin{proof}
         Since $f\in \mathcal{C}_{\arcsin}$. Then from \eqref{e1.6}, the second-order Hankel determinant is given by 
	 	\bea\label{e3.13} H_{2, 1}(F_{f}/2)=\gamma_1\gamma_3-\gamma_2^2.\eea
	 	Using (\ref{e1.3}), (\ref{e3.3})--(\ref{e3.5}) into (\ref{e3.13}) and we get
	 	\bea\label{e3.14} H_{2,1}(F_{f}/2)&=&\frac{1}{4}\left(a_2a_4-a_3^2+\frac{1}{12}a_2^4\right)\nonumber\\ &=&\frac{1}{96\pi^2}\left( p_1p_3-\left(\frac{2\pi-1}{6\pi}\right)p_1^2p_2-\frac{2}{3}p_2^2+\left(\frac{3\pi^2-2\pi-1}{24\pi^2}\right)p_1^4 \right).\eea
	 	Substituting the Carath\'eodory coefficient representations \eqref{e2.2}--\eqref{e2.4} into \eqref{e3.14} and simplifying, we obtain 
	 	\bea\label{e3.15} 
	 	|H_{2,1}(F_{f}/2)| &=&\frac{1}{24\pi^2}\bigg| \frac{(\pi^2-1)}{6\pi^2}u_1^4+\frac{1}{3\pi}(1-u_1^2)u_1^2u_2-(1-u_1^2)\frac{(2+u_1^2)}{3}u_2^2\nonumber\\&&+(1-u_1^2)(1-|u_2|^2)u_1u_3\bigg|.
	 	\eea
	 	Since $u_1\in[0,1]$ by Lemma~\ref{l2.4}, we first consider the boundary cases:\\
        {\bf\underline{Case-1.}} Let us choose $u_1=0$, From (\ref{e3.15}), given $|u_3|<1$, we derive
        \beas |H_{2,1}(F_{f}/2)|=\frac{1}{36\pi^2}| u_2|^2\le \frac{1}{36\pi^2}\approx 0.00281.\eeas\\
        {\bf\underline{Case-2.}} Let us choose $u_1=1$, From (\ref{e3.15}), we obtain
		\beas |H_{2,1}(F_{f}/2)|=\frac{(\pi^2-1)}{144\pi^4}\approx 0.00063.
		\eeas\\
        {\bf\underline{Case-3.}} Let us choose $0<u_1<1$. The triangle inequality applied to (\ref{e3.15}) yields
        \bea\label{e3.16} 
		|H_{2,1}(F_{f}/2)|&\leq&\frac{1}{24\pi^2}u_1(1-u_1^2)\left(\left| \frac{(\pi^2-1)u_1^3}{6\pi^2(1-u_1^2)}+\frac{1}{3\pi}u_1u_2-\left(\frac{2+u_1^2}{3u_1}\right)u_2^2\right|+(1-|u_2|^2)\right)\nonumber\\&\le& \frac{1}{24\pi^2}|u_1\left(1-u_1^2\right)|\Psi\left(A_3, B_3, C_3\right),
		\eea
        where $\Psi\left(A_3, B_3, C_3\right)=|A_3+B_3u_2+C_3u_2^2|+1-|u_2|^2$ with 
        \bea\label{e3.17} A_3=\frac{(\pi^2-1)u_1^3}{6\pi^2(1-u_1^2)}, \  B_3=\frac{u_1}{3\pi} \ \text{and} \ C_3=-\frac{2+u_1^2}{3u_1}.\eea
		\smallskip
		Depending on the choices of $A_3$, $B_3$, and $C_3$, we examine the following cases under Lemma \ref{l2.5}. Clearly, $A_3C_3<0$ for all $0<u_1<1$. This leads to the following sub-cases:
        \smallskip

        Since $B_3=B_2$, $C_3=C_2$ and $A_3$ is analogous to $A_2$, the argument is identical to that in the proof of Theorem~\ref{t3.6}, particularly Subcase 3.1. to Subcase 3.5..
        \smallskip
        
        Hence, the desired bound is $|H_{2,1}(F_{f}/2)|\le\frac{1}{36\pi^2}$.\\
        To see the inequality is sharp, we choose the function $f_2$ defined by \eqref{f_2}.        
     \end{proof}
     
\begin{theo}\label{t3.8}
	Let  $f(z)=z+a_2z^2+a_3z^3+\cdots\in\mathcal{C}_{\arcsin}$, then
	\beas|H_{2, 1}(F_{f^{-1}}/2)| \ \le \ \frac{1}{36\pi^2}.\eeas
	The inequality is sharp.
\end{theo}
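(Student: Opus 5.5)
The plan is to repeat the scheme of Theorems~\ref{t3.6} and \ref{t3.7}: write $H_{2,1}(F_{f^{-1}}/2)$ in terms of $u_1,u_2,u_3$ and bound it with Lemma~\ref{l2.5}.

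\textbf{Step 1: reduction to $a_2,a_3,a_4$.} From \eqref{e1.4},
\[
\Gamma_1\Gamma_3-\Gamma_2^2=\frac14\left(a_2a_4-a_3^2-a_2^2a_3+\frac{13}{12}a_2^4\right).
\]
Inserting \eqref{e3.3}--\eqref{e3.5} gives an expression of the form
\[
\frac{1}{96\pi^2}\left(p_1p_3-\alpha p_1^2p_2-\frac23p_2^2+\beta p_1^4\right),
\qquad \alpha=\frac{2\pi+5}{6\pi}.
\]
Relative to \eqref{e3.14}, the extra term $-\frac14 a_2^2a_3$ only changes the $p_1^2p_2$ and $p_1^4$ coefficients. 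The coefficients of $p_1p_3$ and $p_2^2$ are unchanged.

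\textbf{Step 2: substitution of Lemma~\ref{l2.4}.} Substituting \eqref{e2.2}--\eqref{e2.4} should give
\[
|H_{2,1}(F_{f^{-1}}/2)|=\frac{1}{24\pi^2}\Big|\kappa u_1^4-\frac{5}{3\pi}(1-u_1^2)u_1^2u_2-(1-u_1^2)\frac{2+u_1^2}{3}u_2^2+(1-u_1^2)(1-|u_2|^2)u_1u_3\Big|
\]
for an explicit constant $\kappa$ depending on $\pi$. The shift $\alpha\mapsto\alpha+\frac1\pi$ moves the $u_2$-coefficient from $\frac1{3\pi}$ to $\frac1{3\pi}-\frac2\pi=-\frac5{3\pi}$. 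I would double-check this coefficient first.

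\textbf{Step 3: the three cases in $u_1$.}
\begin{itemize}
\item If $u_1=0$, the expression is $\frac{1}{36\pi^2}|u_2|^2\le\frac1{36\pi^2}$. This is the candidate extremum.
\item If $u_1=1$, the value is $\frac{|\kappa|}{24\pi^2}$, which I expect to be much smaller.
\item If $0<u_1<1$, the triangle inequality and $|u_3|\le1$ give the bound $\frac{1}{24\pi^2}u_1(1-u_1^2)\Psi(A_4,B_4,C_4)$, with
\[
A_4=\frac{\kappa u_1^3}{1-u_1^2},\qquad B_4=-\frac{5u_1}{3\pi},\qquad C_4=-\frac{2+u_1^2}{3u_1}.
\]
\end{itemize}

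\textbf{Main obstacle: the case $0<u_1<1$.} Since $|C_4|>1$ on $(0,1)$, we have $C_4^{-2}-1<0$. Hence the first two alternatives of Lemma~\ref{l2.5}(ii), or the two alternatives of (i) if $\kappa<0$, should be excluded or handled as in Subcases~3.1--3.2 of Theorem~\ref{t3.6}. The real work is in $R(A_4,B_4,C_4)$.
\begin{itemize}
\item In the regime $|A_4B_4|\le|C_4|(|B_4|-4|A_4|)$, valid for small $u_1$, one gets $-|A_4|+|B_4|+|C_4|$. This yields a quartic polynomial in $u_1$ with constant term $\frac{4\pi^2}{36\pi^4}$ relative to the prefactor. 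I would show it is decreasing, so its value is at most its value at $u_1=0$, namely $\frac1{36\pi^2}$.
\item For larger $u_1$ one gets the square-root branch. Its bound is a function $\eta(u_1)$, which I would show to be monotone via the sign of a polynomial in $u_1^2$, exactly as for $\eta_1$. Evaluating at the threshold root should give a number below $\frac1{36\pi^2}$.
\end{itemize}
Because $|B_4|$ is five times larger than $|B_2|$, the thresholds move. This step, locating the threshold root and verifying the polynomial sign conditions numerically, is where I expect difficulty.

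\textbf{Sharpness.} Take $f_2$ from \eqref{f_2}, for which $a_2=a_4=0$ and $a_3=\frac1{3\pi}$. Then $|\Gamma_2^2|=\frac14a_3^2=\frac1{36\pi^2}$ and $\Gamma_1=0$, so equality holds.
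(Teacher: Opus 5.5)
\textbf{The statement is false, so no completion of your plan (or of the paper's proof) can establish it.}

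Your route is the paper's: reduce to $a_2,a_3,a_4$, apply Lemma~\ref{l2.4}, then Lemma~\ref{l2.5}. The coefficient you flagged is right. Substituting \eqref{e2.2}--\eqref{e2.4} into \eqref{e3.7.2}, the coefficient of $(1-u_1^2)u_1^2u_2$ is $\frac23-\frac{2\pi+5}{3\pi}=-\frac{5}{3\pi}$, and $\kappa=\frac{\pi^2+11}{6\pi^2}$. The $-\frac{5}{6\pi}$ in \eqref{e3.7.3} is an arithmetic slip.

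The gap is the step you planned for small $u_1$. With $B_4=-\frac{5u_1}{3\pi}$, the branch $-|A_4|+|B_4|+|C_4|$ gives
\[
\frac{u_1(1-u_1^2)}{24\pi^2}\,(-|A_4|+|B_4|+|C_4|)=\frac{1}{144\pi^4}\left(4\pi^2+2\pi(5-\pi)u_1^2-(3\pi^2+10\pi+11)u_1^4\right).
\]
Its $u_1^2$-coefficient is positive because $\pi<5$. So this quartic is not decreasing; it rises above its value $\frac{1}{36\pi^2}$ at $u_1=0$. The paper's $P_5$ is decreasing only because the slip turns $2\pi(5-\pi)$ into $5\pi-2\pi^2<0$.

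This excess is not slack from the triangle inequality: this branch of $\Psi$ is attained at $u_2=1$. Take, for $0<t<1$,
\[
p(z)=\frac{1+2tz+z^2}{1-z^2}=\frac{1+t}{2}\cdot\frac{1+z}{1-z}+\frac{1-t}{2}\cdot\frac{1-z}{1+z}\in\mathcal P,
\]
that is, $\omega(z)=z\frac{z+t}{1+tz}$. Then $p_1=2t$, $p_2=2$, $p_3=2t$, and \eqref{e3.7.2} gives
\[
H_{2,1}(F_{f^{-1}}/2)=-\frac{1}{144\pi^4}\left(4\pi^2+2\pi(5-\pi)t^2-(3\pi^2+10\pi+11)t^4\right).
\]
At $t^2=\frac{\pi(5-\pi)}{3\pi^2+10\pi+11}\approx0.0811$ this yields
\[
|H_{2,1}(F_{f^{-1}}/2)|=\frac{1}{36\pi^2}+\frac{(5-\pi)^2}{144\pi^2(3\pi^2+10\pi+11)}\approx0.002848>\frac{1}{36\pi^2}\approx0.002815 .
\]
A direct check from $a_2=t/\pi\approx0.0906$, $a_3\approx0.1030$, $a_4\approx-0.00017$ and \eqref{e1.4} confirms this value. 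In particular $f_2$ is not extremal.

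With the correct $B_4$, the regime $|A_4B_4|\le|C_4|(|B_4|-4|A_4|)$ holds up to $u_1^2\approx0.2645$. Beyond that, the square-root branch is decreasing and stays below about $0.00268$. So your method appears to give the sharp value $\frac{13\pi^2+30\pi+69}{144\pi^2(3\pi^2+10\pi+11)}$, attained by the function above, rather than $\frac{1}{36\pi^2}$.
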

\begin{proof}
	Since $f\in\mathcal{C}_{\arcsin}$, we know that 
	\bea\label{e3.7.1} H_{2, 1}(F_{f^{-1}}/2)=\Gamma_1\Gamma_3-\Gamma_2^2.\eea 
	
	Using (\ref{e1.4}) and (\ref{e3.3})--(\ref{e3.5}) in (\ref{e3.7.1}), we obtain \bea\label{e3.7.2} H_{2,1}(F_{f^{-1}}/2)&=&\frac{1}{4}\left(a_2a_4-a_2^2a_3-a_3^2+\frac{13}{12}a_2^4\right)\nonumber\\ &=&\frac{1}{96\pi^2}\left(p_1p_3-\left(\frac{5+2\pi}{6\pi}\right)p_1^2p_2-\frac{2}{3}p_2^2+\left(\frac{11+10\pi+3\pi^2}{24\pi^2}\right)p_1^4\right).\eea 
	
	Using Lemma \ref{l2.4} on (\ref{e3.7.2}) we have \bea\label{e3.7.3}\nonumber H_{2,1}(F_{f^{-1}}/2) &=&\frac{1}{24\pi^2}\left\{\left(\frac{11+\pi^2}{6\pi^2}\right)u_1^4-\frac{5}{6\pi}(1-u_1^2)u_1^2u_2-\frac{1}{3}(1-u_1^2)(2+u_1^2)u_2^2\right.\\&&\left.+(1-u_1^2)(1-|u_2|^2)u_1u_3\right\}.  \eea 
	
	Here, by Lemma \ref{l2.4}, we have $u_1\in [0, 1]$ and $|u_2|\le1$, $|u_3|\le 1$.\\
	{\bf\underline{Case--1.}} Let us choose $u_1=0$. Then from (\ref{e3.7.3}) we  get \beas | H_{2,1}(F_{f^{-1}}/2)| \le \frac{1}{36\pi^2}|u_2|^2 \le \frac{1}{36\pi^2}\approx 0.00281. \eeas \\
	{\bf\underline{Case--2.}} Let us choose $u_1=1$. Then from (\ref{e3.7.3}) we  get \beas | H_{2,1}(F_{f^{-1}}/2)| = \frac{1}{144\pi^2}\left(1+\frac{11}{\pi^2}\right)\approx 0.00149.\eeas\\
	{\bf\underline{Case--3.}} Choose $0< u_1<1$. Applying the triangle inequality to (\ref{e3.7.3}), for
	$u_1\in(0,1)$ and $|u_3|\le1$, we obtain 
	\beas | H_{2,1}(F_{f^{-1}}/2)|&\le&  \frac{u_1(1-u_1^2)}{24\pi^2}\left\{\left|\frac{(11+\pi^2)u_1^3}{6\pi^2(1-u_1^2)}-\frac{5}{6\pi}u_1u_2-\frac{1}{3}\left(\frac{2+u_1^2}{u_1}\right)u_2^2\right|+(1-|u_2|^2)\right\} \\&\le&  \frac{u_1(1-u_1^2)}{24\pi^2} \Psi (A_4,B_4,C_4),\eeas
	
	where $\Psi (A_4, B_4, C_4)=|A_4+B_4u_2+C_4u_2^2|+ (1-|u_2|^2)$ and \bea\label{e3.7.4} A_4=\frac{(11+\pi^2)u_1^3}{6\pi^2(1-u_1^2)}, \;\; B_4= -\frac{5}{6\pi}u_1,\;\; C_4=-\left(\frac{2+u_1^2}{3u_1}\right).\eea In view of Lemma~\ref{l2.5}, the subsequent discussion is divided into the following cases, determined by the values of $A_4$, $B_4$, and $C_4$.
	
	\smallskip
	Suppose  $0<u_1<1$. Then it is clear that $A_4C_4<0$.
	Now applying Lemma \ref{l2.5}-(ii), we consider the following subcases:\\
	{\bf\underline{Subcase--3.1.}}  Here, by a simple calculation we can see that $|B_4| > 2(1-|C_4|)$, hence \beas \Psi(A_4,B_4,C_4)\ne 1-|A_4|+\frac{B_4^2}{4(1-|C_4|)}. \eeas\\
	{\bf\underline{Subcase--3.2.}} Let us consider the case $B_4^2<\min\left\{4(1+|C_4|)^2, -4A_4C_4(C_4^{-2}-1)\right\}$. Now, from (\ref{e3.7.4}) we can deduce that 
	\beas \left(\frac{1}{C_4^2}-1\right)=-\frac{1}{\left(u_1^2+2\right)^2}\left(u_1^2-1\right)\left(u_1^2-4\right)<0, \ \text{for} \ 0<u_1<1.\eeas
	Since we have $A_4C_4<0$, then $-4A_4C_4\left(\frac{1}{C_4^2}-1\right)<0$ for all $0<u_1<1$, it follows that 
	\beas\min\left\{4(1+|C_4|)^2, -4A_4C_4(C_4^{-2}-1)\right\}=-4A_4C_4\left({C_4^{-2}}-1\right).\eeas
	Therefore, it is clear that $B_4^2\not<\min\left\{4(1+|C_4|)^2, -4A_4C_4(C_4^{-2}-1)\right\}$.
	\par Using Lemma \ref{l2.5}-(ii) we have
	\beas \Psi(A_4, B_4, C_4)\neq1+| A_4|+\frac{B_4^2}{4\left(1+|C_4|\right)}. \eeas\\
	{\bf\underline{Subcase--3.3.}} Next consider the case $|A_4B_4|\ge|C_4| \left(|B_4|+4|A_4|\right)$. Here from (\ref{e3.7.4}) we have
	\beas &&|A_4B_4|-|C_4| \left(|B_4|+4| A_4|\right)\\&=&\frac{(-11+15\pi-8\pi^2)u_1^4-(176-10\pi+16\pi^2)u_1^2-20\pi}{36\pi^2\left(1-u_1^2\right)}<0, \ \text{for} \ 0<u_1<1.\eeas
	Hence, by Lemma \ref{l2.5} we get
	\beas \Psi(A_4, B_4, C_4)\neq|A_4|+|B_4|-|C_4|.  \eeas\\
	{\bf\underline{Subcase--3.4.}} Here, from (\ref{e3.7.4}) we can see that \beas&&|A_4B_4|-|C_4| \left(|B_4|-4| A_4|\right)\\&=& \frac{(143+15\pi+8\pi^2)u_1^4+(176+10\pi+16\pi^2)u_1^2-20\pi}{36\pi^2\left(1-u_1^2\right)}\\ &=&\frac{P_4(u_1)}{36\pi^2\left(1-u_1^2\right)},\eeas
where $P_4(u_1)=(143+15\pi+8\pi^2)u_1^4+(176+10\pi+16\pi^2)u_1^2
-20\pi.$ Setting $x=u_1^2$, we obtain the quadratic polynomial
$$P_4(x)=(143+15\pi+8\pi^2)x^2+(176+10\pi+16\pi^2)x-20\pi.$$
A straightforward computation shows that $P_4(x)$ has a unique positive zero $x_0\approx0.15443\in(0,1)$ and
$P_4(x)\le0, 0<x\le x_0.$ Since $x=u_1^2$, it follows that $u_1=\sqrt{x_0}\approx0.39298$. Therefore, $P_4(u_1)\le0, for 0<u_1\le0.39298$.

Figure~\ref{fig:alpha38} depicts the graph of $P_4(u_1)$ on $(0,1)$. It shows that $P_4(u_1)$ has a unique positive zero at $u_1\approx0.39298$ and consequently $P_4(u_1)<0, 0<u_1<0.39298.$

	\begin{figure}[H]
		\centering
		\includegraphics[width=0.65\textwidth]{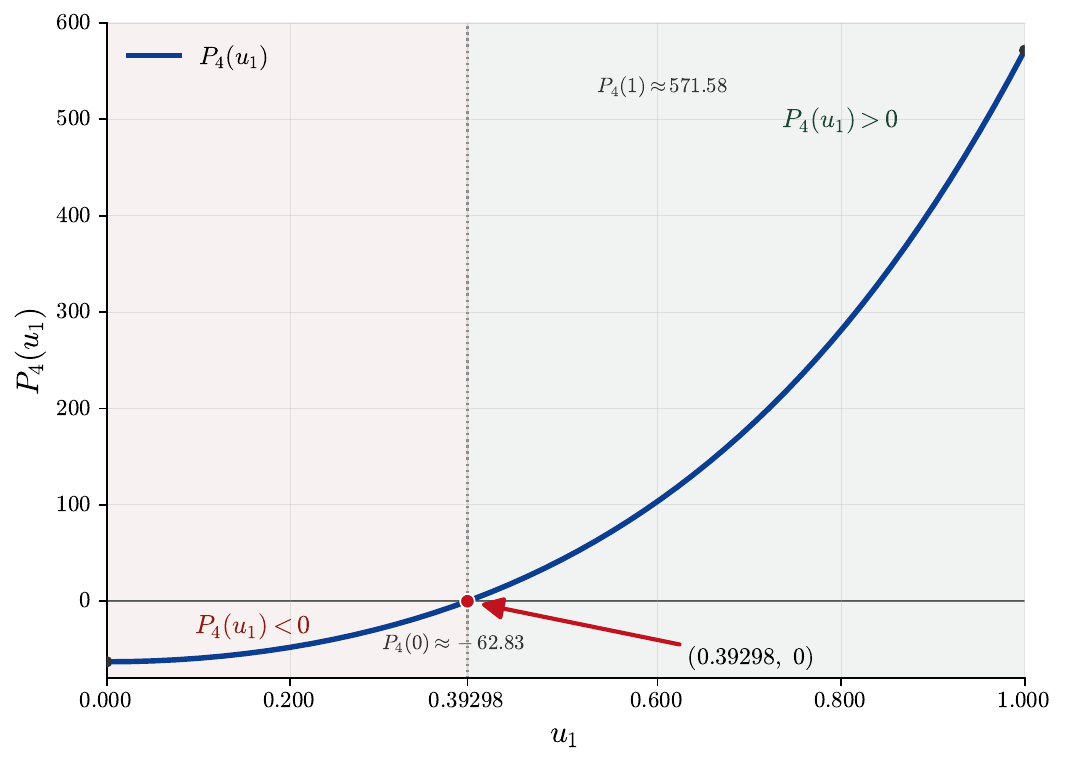}
		\caption{The polynomial $P_4(u_1)$ has a unique zero in $(0,1)$ at
			$u_1\approx0.39298$.}
		\label{fig:alpha38}
	\end{figure}	
	 
	\par Using Lemma \ref{l2.5} and from (\ref{e3.7.4}) we get
	
	\bea\label{e3.7.5} |H_{2,1}(F_{f^{-1}}/2)|&\le&\frac{u_1(1-u_1^2)}{24\pi^2} \Psi (A_4,B_4,C_4),\nonumber\\&\le&\frac{u_1(1-u_1^2)}{24\pi^2}\left(-|A_4|+|B_4|+| C_4|\right)\nonumber\\&\le&\frac{1}{144\pi^4}\left(-(3\pi^2+5\pi-11)u_1^4-(2\pi^2-5\pi)u_1^2+4\pi^2\right)\nonumber\\&=&\frac{1}{144\pi^4}P_5(u_1),  \eea
	where $P_5(u_1)=-(3\pi^2+5\pi-11)u_1^4-(2\pi^2-5\pi)u_1^2+4\pi^2$. One can verify that $P_5(u_1)$ is decreasing in $(0, 0.39298]$. Hence from (\ref{e3.7.5}) we get
	\beas |H_{2,1}(F_{f^{-1}}/2)|\le \frac{1}{36\pi^2}\approx 0.00281.\eeas\\
	{\bf\underline{Subcase--3.5.}} Let us consider $ 0.39298<u_1<1$. It remains to consider the last case in Lemma \ref{l2.5}. Then, from (\ref{e3.7.4}) we have
	\bea\label{e3.7.6} 
	|H_{2, 1}(F_{f^{-1}}/2)|&\le& \frac{u_1(1-u_1^2)}{24\pi^2} (|C_4|+|A_4| )\sqrt{1-\frac{B_4^2}{4A_4C_4}}\nonumber\\ &\le& \frac{\eta_2(u_1)}{144\pi^4\sqrt{8\left(\pi^2+11\right)}},\eea
	
 where \beas \eta_2(u_1)=\left(\left({11-\pi^2}\right)u_1^4-2\pi^2u_1^2+4\pi^2\right)\sqrt{\frac{(8\pi^2+63)u_1^2+(16\pi^2+201)}{(u_1^2+2)}}.\eeas Differentiating $\eta_2(u_1)$ with respect to $u_1$ and we get
 \bea\label{eta_2}\eta_2^{\prime}(u_1)=\frac{u_1P_6(u_1)}{\left(u_1^2+2\right)^2\sqrt{\frac{(8\pi^2+63)u_1^2+(16\pi^2+201)}{(u_1^2+2)}}}, \eea
 where \beas P_6(u_1)&=&(-32\pi^4+100\pi^2+2772)u_1^6+(-160\pi^4-77\pi^2+13563)u_1^4\\&&+(-256\pi^4-1358\pi^2+17688)u_1^2-(1908\pi^2+128\pi^4).\eeas
 Here by simple calculation for $ 0.39298<u_1<1$ we have $P_6(u_1)<0$. Hence, from \eqref{eta_2}, we conclude that
 $\eta_2'(u_1)<0$. Therefore, $\eta_2(u_1)$ is monotonically decreasing on $(0.39298,1)$.

Figure~\ref{fig:eta38} shows that $\eta_2(u_1)$
	is strictly decreasing on $(0.39298,1)$.
	\begin{figure}[H]
		\centering
		\includegraphics[width=0.75\textwidth]{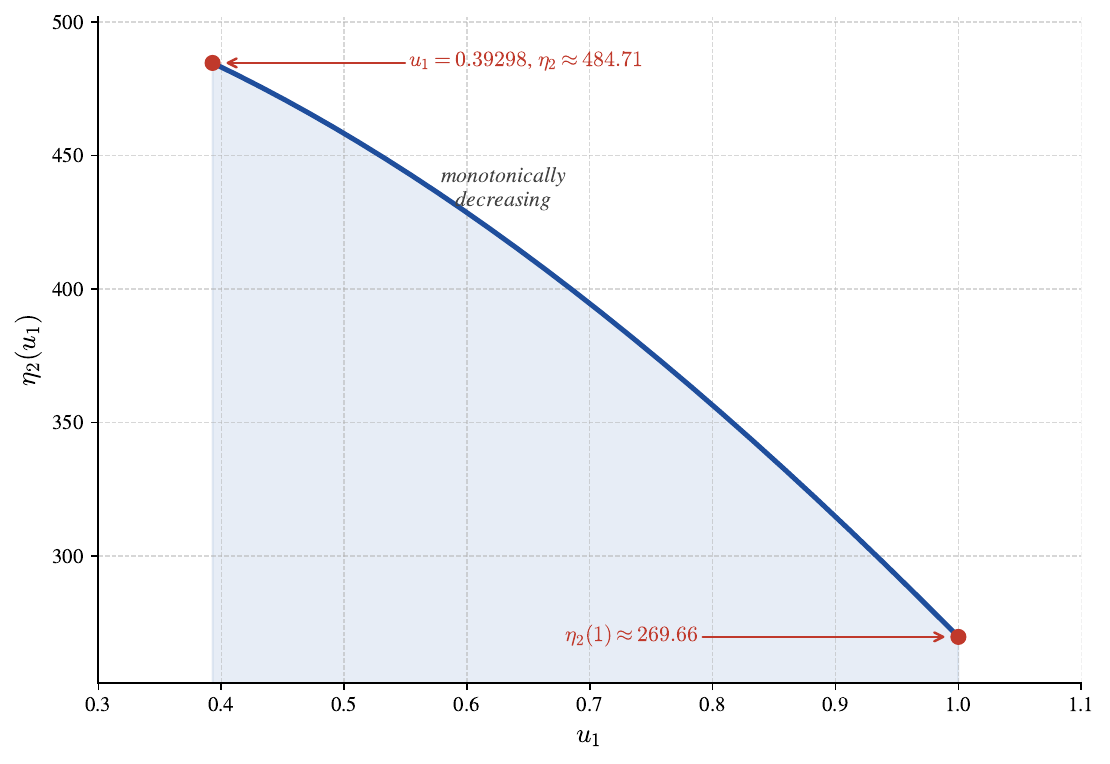}
		\caption{The function
			$\eta_2(u_1)$ attains its maximum near
			$u_1=0.39298$.}
		\label{fig:eta38}
	\end{figure}
	Therefore from \eqref{e3.7.6} we obtain
	 \beas |H_{2, 1}(F_{f^{-1}}/2)| \le \frac{\eta_2(0.39298)}{144\pi^4\sqrt{8\left(\pi^2+11\right)}}\approx0.000207.\eeas
	Hence, combining all the cases, we have \beas |H_{2, 1}(F_{f^{-1}}/2)|\le \frac{1}{36\pi^2},\eeas which is the required estimation.\\
	For sharpness, we choose the extremal function $f_2\in \mathcal{C}_{\arcsin}$ defined by \eqref{f_2}. 
	\end{proof}
	
	        \subsection{\bf Generalized Zalcman Conjecture for class $\mathcal{C}_{\arcsin}$}
	        In 1960 Zalcman's conjecture initiated the systematic study of coefficient functional in the class $\mathcal{S}$. It states that 
	        \beas |a_n^2-a_{2n-1}| \le (n-1)^2, \ \ n\ge2,\eeas with equality attained only by the Koebe function and its rotations. This conjecture immediately yields the Bieberbach inequality. Motivated by these studies in 1999 Ma \cite{Ma} formulated the generalized Zalcman conjecture 
	        \beas |a_na_m-a_{n+m-1}|\le (n-1)(m-1), \ \ m,n\ge 2. \eeas
	        Although the conjecture remains unresolved for the entire class $\mathcal{S}$, it has been verified for the subclasses
	        $\mathcal{S}^{*}$ and $S_{\mathbb{R}}$. 
	        
	        \smallskip
	        In this paper, we establish a sharp result concerning the generalized Zalcman conjecture for the initial coefficients of functions in the class
	        $\mathcal{C}_{\arcsin}$.
        \begin{theo}\label{t3.9}
			Let $f(z)=z+a_2z^2+a_3z^3+\cdots\in\mathcal{C}_{\arcsin} $. Then
		\beas
		|a_2a_3-a_4|\le \frac{1}{6\pi}.
		\eeas
		The inequality is sharp.
	\end{theo}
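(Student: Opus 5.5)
The plan is to write $a_2a_3-a_4$ as a single cubic expression in the Carath\'eodory coefficients, in the same shape used for $|a_4|$ in Theorem~\ref{t3.1}. Then Lemma~\ref{l2.2} applies directly and gives the bound. Sharpness comes from the function $f_3$ in \eqref{f_3}.

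First, insert \eqref{e3.3}--\eqref{e3.5}. From \eqref{e3.3} and \eqref{e3.4},
\[
a_2a_3=\frac{1}{12\pi^2}p_1p_2-\frac{\pi-2}{24\pi^3}p_1^3 .
\]
Subtracting this from \eqref{e3.5} and collecting terms gives
\[
a_4-a_2a_3=\frac{1}{12\pi}\left(p_3-\frac{2\pi-1}{2\pi}\,p_1p_2+\frac{7\pi^2-6\pi-12}{24\pi^2}\,p_1^3\right).
\]
The coefficient of $p_1p_2$ comes from $\frac{2\pi-3}{2\pi}+\frac{1}{\pi}$. The coefficient of $p_1^3$ comes from $\frac{7\pi^2-18\pi+12}{24\pi^2}+\frac{\pi-2}{2\pi^2}$. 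Hence
\[
|a_2a_3-a_4|=\frac{1}{12\pi}\left|p_3-2Bp_1p_2+Dp_1^3\right|,
\]
where
\[
B=\frac{2\pi-1}{4\pi}\approx 0.4204,\qquad D=\frac{7\pi^2-6\pi-12}{24\pi^2}\approx 0.1614 .
\]

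Second, check the hypotheses of Lemma~\ref{l2.2}. Clearly $0\le B\le 1$. Since $2B-1=-\frac{1}{2\pi}<0$, we get $2B(2B-1)=-\frac{2\pi-1}{4\pi^2}<0<D$. The remaining condition $D\le B$ reduces to $7\pi^2-6\pi-12\le 6\pi(2\pi-1)$, that is, $-5\pi^2-12\le 0$, which holds. So $(B,D)$ lies in the admissible region of Lemma~\ref{l2.2}. The lemma then gives $|p_3-2Bp_1p_2+Dp_1^3|\le 2$, and therefore $|a_2a_3-a_4|\le \frac{1}{6\pi}$.

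Third, sharpness. Take $f_3$ from \eqref{f_3}, which corresponds to $p(z)=\frac{1+z^3}{1-z^3}$ and $\omega(z)=z^3$. Its coefficients are $a_2=a_3=0$ and $a_4=\frac{1}{6\pi}$, so $|a_2a_3-a_4|=\frac{1}{6\pi}$. Equivalently, in terms of $p$ we have $p_1=p_2=0$ and $p_3=2$, so equality holds in Lemma~\ref{l2.2}.

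The only step with any real difficulty is the algebra of combining the $p_1p_2$ and $p_1^3$ coefficients. That step needs care, because the sign of $D$ and the inequality $D\le B$ both rely on it. Once $B$ and $D$ are confirmed, the rest is immediate, and no case analysis with Lemma~\ref{l2.5} is needed.
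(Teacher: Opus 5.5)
Your proposal is correct and is essentially the paper's proof: the same reduction $|a_2a_3-a_4|=\frac{1}{12\pi}|p_3-2Bp_1p_2+Dp_1^3|$ with $B=\frac{2\pi-1}{4\pi}$ and $D=\frac{7\pi^2-6\pi-12}{24\pi^2}$, then Lemma~\ref{l2.2}, then sharpness via $f_3$ from \eqref{f_3}. Your explicit check of $2B(2B-1)\le D\le B$, which the paper only asserts, is a welcome addition; since $D>0>B(2B-1)$, it also satisfies the form $B(2B-1)\le D\le B$ in which Ali's lemma is usually quoted, so nothing depends on the lower bound as printed in Lemma~\ref{l2.2}.
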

     \begin{proof}
         In view of (\ref{e3.3})--(\ref{e3.5}) we obtain
         \beas
         |a_2a_3-a_4|&=&\frac{1}{12\pi}\left| p_3-\left(\frac{2\pi-1}{2\pi}\right)p_1p_2+\left(\frac{7\pi^2-6\pi-12}{24\pi^2}\right)p_1^3\right|\nonumber\\&=&\frac{1}{12\pi}\left|p_3-2Bp_1p_2+Dp_1^3 \right|,\eeas
         where $B=\frac{2\pi-1}{4\pi}\approx0.42042$ and $D=\frac{7\pi^2-6\pi-12}{24\pi^2}\approx 0.16143$. Observing that $2B(2B-1)\le D\le B$, an application of Lemma \ref{l2.2} to (\ref{e3.6}) yields
         \beas |a_2a_3-a_4|&=&\frac{1}{6\pi}.\eeas
         For sharpness we take the extremal function $f_3$ defined by \eqref{f_3}.
     \end{proof}

        \subsection{Generalized Fekete--Szeg\"o functional} The Generalized Fekete--Szeg\"o functional is fundamental in geometric function theory due to its close connection with coefficient estimates, extremal problems, and geometric properties of analytic functions. In this direction in 2024 Lecko-Partyka \cite{Lecko_BSM_2024} studied the generalized Fekete--Szeg\"o functional for the class $\mathcal{S}$, defined by 
        \beas F_{\lambda,\mu}(f)&=&\left|a_3(f)-\lambda a_2^2(f)\right|-\mu |a_2(f)| \ \text{for} \  \lambda\in\mathbb{C}, \ \mu>0.\eeas
        In particular, the coefficients $a_2(f)=a_2$, $a_3(f)=a_3$, which are given in \eqref{e-1.1}, then we have 
        \bea F_{\lambda,\mu}(f)&=&\left|a_3-\lambda a_2^2\right|-\mu |a_2| \ \text{for} \  \lambda\in\mathbb{C}, \ \mu>0.\eea
        In this section, we establish the sharp upper and lower bounds for the generalized Fekete--Szego\"{o} functional associated with the class $\mathcal{C}_{\arcsin}$.        
        \begin{theo}\label{t3.10}
Let $f(z)=z+a_2z^2+a_3z^3+\cdots\in\mathcal{C}_{\arcsin}$. Then
\beas
\Theta_1\leq F_{\lambda,\mu}(f)\leq
\begin{cases}
\frac{1}{3\pi^2}\left(|2-3\lambda|-3\pi\mu\right),
& \text{if } |2-3\lambda|\ge\pi(1+3\mu),\\[2mm]
\dfrac{1}{3\pi},
& \text{if } |2-3\lambda|<\pi(1+3\mu),
\end{cases}
\eeas
where
\[
\Theta_1=
\begin{cases}
\frac{1}{3\pi^2}\left(|2-3\lambda|-3\pi\mu\right),
& \text{if } \frac{\pi}{2}(3\mu-2)\ge|2-3\lambda|,\\[3mm]
-\frac{\mu}{\pi}\sqrt{\frac{\pi}{|2-3\lambda|+\pi}},
& \text{if } \frac{\pi}{4}\left(9\mu^2-4\right)\le| 2-3\lambda|,\\[4mm]
-\frac{1}{12\pi}\left(4+\frac{9\pi\mu^2}{| 2-3\lambda|+\pi}\right),
& \text{if }
\frac{\pi}{4}\left(9\mu^2-4\right)> |2-3\lambda|>\frac{\pi}{2}(3\mu-2).
\end{cases}
\]

All the above inequalities in are sharp.
\end{theo}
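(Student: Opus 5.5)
We reduce $F_{\lambda,\mu}(f)$ to the functional $\Phi(p_1,p_2)=|Kp_1^2+Lp_2|-J|p_1|$ of Lemma~\ref{l2.6} and read off both bounds from its two estimates, as in Theorems~\ref{t3.4} and \ref{t3.5}.

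\textbf{Reduction.} Let $f\in\mathcal C_{\arcsin}$ and write $\omega=(p-1)/(p+1)$ with $p\in\mathcal P$, as in Theorem~\ref{t3.1}. By \eqref{e3.3}--\eqref{e3.4},
\[
a_3-\lambda a_2^2=\frac{1}{6\pi}p_2-\frac{\pi-2+3\lambda}{12\pi^2}p_1^2,\qquad \mu|a_2|=\frac{\mu}{2\pi}|p_1|.
\]
Hence $F_{\lambda,\mu}(f)=\Phi(p_1,p_2)$ with
\[
L=\frac1{6\pi}\in\mathbb R,\qquad K=-\frac{\pi-2+3\lambda}{12\pi^2}\in\mathbb C,\qquad J=\frac{\mu}{2\pi}\ge0 .
\]
These satisfy the hypotheses of Lemma~\ref{l2.6}. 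Direct computation gives
\[
2K+L=\frac{2-3\lambda}{6\pi^2},\qquad M=|4K+2L|=\frac{|2-3\lambda|}{3\pi^2}.
\]
The point is that the $\pi$-dependent part of $K$ cancels, so everything depends only on $|2-3\lambda|$.

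\textbf{Upper bound.} The condition $|2K+L|\ge |L|+J$ reads $|2-3\lambda|/(6\pi^2)\ge 1/(6\pi)+\mu/(2\pi)$, i.e. $|2-3\lambda|\ge\pi(1+3\mu)$. In that case Lemma~\ref{l2.6} gives
\[
|4K+2L|-2J=\frac{|2-3\lambda|}{3\pi^2}-\frac{\mu}{\pi}=\frac{1}{3\pi^2}\big(|2-3\lambda|-3\pi\mu\big).
\]
Otherwise it gives $2|L|=1/(3\pi)$.

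\textbf{Lower bound.} We apply the second half of Lemma~\ref{l2.6} to $-\Phi$ and translate each case condition.
\begin{itemize}
\item $J\ge M+2|L|$ becomes $\tfrac{3\pi\mu}{2}\ge |2-3\lambda|+\pi$, i.e. $\tfrac{\pi}{2}(3\mu-2)\ge|2-3\lambda|$. Then $F\ge M-2J$, which is the first line of $\Theta_1$.
\item $J^2\le 2|L|(M+2|L|)$ becomes $\tfrac{9\pi\mu^2}{4}\le |2-3\lambda|+\pi$, i.e. $\tfrac{\pi}{4}(9\mu^2-4)\le|2-3\lambda|$. Then
\[
F\ge -2J\sqrt{\frac{2|L|}{M+2|L|}}=-\frac{\mu}{\pi}\sqrt{\frac{\pi}{|2-3\lambda|+\pi}}.
\]
\item In the remaining range,
\[
F\ge-\Big(2|L|+\frac{J^2}{M+2|L|}\Big)=-\Big(\frac1{3\pi}+\frac{3\mu^2}{4(|2-3\lambda|+\pi)}\Big),
\]
which equals the third line of $\Theta_1$.
\end{itemize}
These computations are routine; I would only double-check the algebra of each translated condition.

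\textbf{Sharpness (main obstacle).} Three cases use the functions already constructed.
\begin{itemize}
\item For the first upper case and the first lower case, take $f_1$ (so $p_1=2$, $p_2=2$) after a rotation. This gives $\Phi=|4K+2L|-2J$ directly.
\item For the bound $1/(3\pi)$, take $f_2$: then $p_1=0$, $p_2=2$, so $\Phi=2|L|$.
\end{itemize}
The two interior lower cases are the hard part. I would follow Theorem~\ref{t3.5}: use Lemma~\ref{l2.4} with $u_1=\tau_1\in(0,1)$ and $u_2$ unimodular, so that $p_1=2\tau_1$ and $p_2=2\tau_1^2+2(1-\tau_1^2)u_2$.
\begin{itemize}
\item First, rotate $f$ so that $K$ is effectively real and negative. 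The phase of $K$ can be absorbed by the rotation $p(z)\mapsto p(e^{i\theta}z)$, which multiplies $Kp_1^2$ and $p_2$ by the same factor.
\item In the last case, choose $u_2=-1$. Then $Kp_1^2+Lp_2=\tau_1^2(4K+4L)-2L$, which is negative for small $\tau_1$, so $\Phi$ is an explicit concave quadratic in $\tau_1$. Its minimizer should be $\tau_1=J/(2(M+2|L|))$; I would verify that this lies in $(0,1)$ exactly in that parameter range.
\item In the middle case, choose $u_2$ so that $Kp_1^2+Lp_2=0$. Then $\Phi=-J|p_1|$, and I would check that the resulting $|p_1|$ equals $2\sqrt{2|L|/(M+2|L|)}$.
\end{itemize}
If the rotation argument fails for non-real $\lambda$, I would state sharpness for real $\lambda$, where $K$ is real and the choices above are immediate.
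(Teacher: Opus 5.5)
Your reduction, your translation of the case conditions of Lemma~\ref{l2.6}, and all the bound values are correct. They are exactly the paper's argument; the paper just keeps $\frac{1}{12\pi^2}$ outside, with $K=2-\pi-3\lambda$, $L=2\pi$, $J=6\pi\mu$. Sharpness via $f_1$ and $f_2$ is also as in the paper.

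The gap is in the sharpness of the third lower-bound case. The rotation $p(z)\mapsto p(e^{i\theta}z)$ multiplies $Kp_1^2+Lp_2$ by $e^{2i\theta}$ and fixes $|p_1|$. So it leaves $\Phi$ invariant and cannot make $K$ ``effectively real''. The phase of $2-3\lambda$ has to go into the unimodular parameter $u_2$. With $u_1=\tau_1\in(0,1)$ and $|u_2|=1$,
\[
Kp_1^2+Lp_2=\frac{2-3\lambda}{3\pi^2}\,\tau_1^2+2L(1-\tau_1^2)u_2 .
\]
The right choice is therefore $u_2=-\frac{2-3\lambda}{|2-3\lambda|}$, which gives $|Kp_1^2+Lp_2|=\bigl|(M+2|L|)\tau_1^2-2|L|\bigr|$. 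Your fixed choice $u_2=-1$ is correct only when $2-3\lambda>0$. Take $\lambda=1$, $\mu=0.8$, which lies in the third range. There $u_2=-1$ gives at best $\frac{1}{3\pi^2}-\frac{0.8}{\pi}\approx-0.2209$, above the claimed bound $\approx-0.2220$. So retreating to real $\lambda$ would not save the argument, and in any case the theorem asserts sharpness for all $\lambda\in\mathbb C$.

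The location of the extremal point is also wrong. Where your expression is negative, i.e.\ for $(M+2|L|)\tau_1^2\le 2|L|$, $\Phi=2|L|-(M+2|L|)\tau_1^2-2J\tau_1$ is concave and decreasing, so it has no interior minimizer there. The minimum lies on the other branch, where $\Phi=(M+2|L|)\tau_1^2-2|L|-2J\tau_1$ is convex. Its minimizer is $\tau_1=J/(M+2|L|)=\frac{3\pi\mu}{2(|2-3\lambda|+\pi)}$, not $J/(2(M+2|L|))$ (recall $J|p_1|=2J\tau_1$). The constraints $\sqrt{2|L|/(M+2|L|)}\le\tau_1<1$ are then exactly $2|L|(M+2|L|)\le J^2<(M+2|L|)^2$, i.e.\ the third range, and the value attained is $-2|L|-J^2/(M+2|L|)$. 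This is precisely the paper's $f_6$, with $\tau_1$ as above and $\tau_2=\frac{3\lambda-2}{|2-3\lambda|}$.

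Your middle case is fine as stated. Requiring $Kp_1^2+Lp_2=0$ with $|u_2|=1$ forces $\tau_1=\sqrt{\pi/(|2-3\lambda|+\pi)}$ and the same unimodular $u_2$, which is the paper's $f_5$. It works for every complex $\lambda$ with no rotation (use $f_1$ when $\lambda=2/3$).
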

\begin{proof}
Using \eqref{e3.3}--\eqref{e3.4} we obtain
\bea\label{fe1} F_{\lambda,\mu}(f)&=&\left|a_3-\lambda a_2^2\right|-\mu |a_2|\nonumber\\&=& \frac{1}{12\pi^2}\left(\left|\left(2-\pi-3\lambda\right)p_1^2+ {2\pi p_2}\right|-\left|6\pi\mu p_1\right|\right)\nonumber\\&=&\frac{1}{12\pi^2}\Phi(p_1, p_2),
\eea 
where $\Phi(p_1, p_2)=\left|Kp_1^{2}+Lp_2\right|-J|p_1|$, with $K=\left(2-\pi-3\lambda\right)$, $L=2\pi$ and $J=6\pi\mu$. We also easily conclude that $M=| 4K+2L|=4| 2-3\lambda|$.
\newline
$\bullet$\underline{\bf{Sharp upper bound:}} Let us consider $|2-3\lambda|\ge\pi(1+3\mu)$. It follows that $\left|2K+L\right|\ge|L|+J$, then by Lemma \ref{l2.6} we get
\bea\label{fe2} \Phi(p_1, p_2)\le 4\left(|2-3\lambda|-3\pi\mu\right).\eea
Putting the value of \eqref{fe2} to \eqref{fe1}, we obtain 
\beas F_{\lambda,\mu}(f)\le\frac{1}{3\pi^2}\left(|2-3\lambda|-3\pi\mu\right).\eeas
\par Equality is attained by the extremal function $f_1$ defined by \eqref{f_1}.
 
\smallskip
Similarly, if $|2-3\lambda|<\pi(1+3\mu),$
then $|2K+L|<|L|+J$.
Therefore, Lemma~\ref{l2.6} yields
\beas F_{\lambda,\mu}(f)\le\frac{1}{6\pi^2}| L|=\frac{1}{3\pi}.\eeas
For sharpness we take $f_2$ introduced by \eqref{f_2}.
Hence, we obtain the desired upper bound
\beas 
F_{\lambda,\mu}(f)\leq
\begin{cases}
\frac{1}{3\pi^2}\left(|2-3\lambda|-3\pi\mu\right),
& \text{if } |2-3\lambda|\ge\pi(1+3\mu),\\[2mm]
\dfrac{1}{3\pi},
& \text{if } |2-3\lambda|<\pi(1+3\mu)
\end{cases}
.\eeas
\newline
$\bullet$\underline{\bf{Sharp lower bound:}} To find the exact lower bound, we examine distinct scenarios.\\
{\bf A.} Let us consider $\frac{\pi}{2}(3\mu-2)\ge|2-3\lambda|$, then this implies $J\ge M+2|L|$. Then by Lemma \ref{l2.6} we obtain
\bea\label{fe3} -\Phi(p_1, p_2)\le 4\left(3\pi\mu-|2-3\lambda|\right).\eea
Now from \eqref{fe3} and \eqref{fe1} we have 
\beas F_{\lambda,\mu}(f)\ge\frac{1}{3\pi^2}\left(|2-3\lambda|-3\pi\mu\right).\eeas
For sharpness we choose $f_1$ defined by \eqref{f_1}.\\
{\bf B.} Suppose $\frac{\pi}{4}\left(9\mu^2-4\right)\le | 2-3\lambda|$. Then we have $J^2\le 2|L|\left(M+2| L|\right)$. Using Lemma \ref{l2.6} to \eqref{fe1} we obtain 
\bea\label{feb} F_{\lambda,\mu}(f)\ge-\frac{1}{6\pi^2}\left(J\sqrt{\frac{2|L|}{M+2|L|}}\right)=-\frac{\mu}{\pi}\sqrt{\frac{\pi}{|2-3\lambda|+\pi}}.\eea
To show the sharpness of inequality \eqref{feb} we consider the extremal function $f_5\in \mathcal{C}_{\operatorname{arcsin}}$ with $$p(z) = \frac{1+( \tau_1 \tau_2+\tau_1)z + \tau_2 z^2}{1+(\tau_1 \tau_2 - \tau_1)z - \tau_2 z^2},$$ where $\tau_1=\sqrt{\frac{\pi}{|2-3\lambda|+\pi}}$ and $\tau_2=\frac{(-2+3\lambda)}{|2-3\lambda|}$. Then $p_1=2\tau_1$ and $p_2=2\tau_1^2+2\left(1-\tau_1^2\right)\tau_2$, and it follows that 
\beas|Kp_1^2+Lp_2|=0. \eeas
Therefore, from \eqref{fe1}, we obtain 
\beas F_{\lambda,\mu}=-\frac{J|p_1|}{12\pi^2}=-\frac{\mu}{\pi}\sqrt{\frac{\pi}{|2-3\lambda|+\pi}}. \eeas
\\
{\bf C.} Suppose $\frac{\pi}{4}\left(9\mu^2-4\right)> | 2-3\lambda|>\frac{\pi}{2}(3\mu-2)$. Then, applying Lemma~\ref{l2.6} to \eqref{fe1}, we get 
\bea\label{fec} F_{\lambda,\mu}(f)\ge-\frac{1}{12\pi^2}\left(2| L|+\frac{J^2}{M+2|L|}\right)=-\frac{1}{12\pi}\left(4+\frac{9\pi\mu^2}{|2-3\lambda|+\pi}\right).\eea
For sharpness of the inequality \eqref{fec} we consider $f_6\in \mathcal{C}_{\arcsin}$ with $$p(z) = \frac{1+( \tau_1 \tau_2+\tau_1)z + \tau_2 z^2}{1+(\tau_1 \tau_2 - \tau_1)z - \tau_2 z^2},$$ where $\tau_1=\frac{3\pi\mu}{2\left(| 2-3\lambda|+\pi\right)}$ and $\tau_2=\frac{(-2+3\lambda)}{|2-3\lambda|}$. Then $p_1=2\tau_1$ and $p_2=2\tau_1^2+2\left(1-\tau_1^2\right)\tau_2$. It is easy to verify that 
\beas \Phi(p_1, p_2)=|Kp_1^2+Lp_2|-| Jp_1|=-\pi\left(4+\frac{9\pi\mu^2}{|2-3\lambda|+\pi}\right).\eeas
Therefore from \eqref{fe1} we get $F_{\lambda,\mu}(f)=-\frac{1}{12\pi}\left(4+\frac{9\pi\mu^2}{|2-3\lambda|+\pi}\right)$.
\par Collecting all the above cases, we obtain the desired lower bound
\[
\Theta_1=
\begin{cases}
\frac{1}{3\pi^2}\left(|2-3\lambda|-3\pi\mu\right),
& \text{if } \frac{\pi}{2}(3\mu-2)\ge|2-3\lambda|,\\[3mm]
-\frac{\mu}{\pi}\sqrt{\frac{\pi}{|2-3\lambda|+\pi}},
& \text{if } \frac{\pi}{4}\left(9\mu^2-4\right)\le | 2-3\lambda|,\\[4mm]
-\frac{1}{12\pi}\left(4+\frac{9\pi\mu^2}{| 2-3\lambda|+\pi}\right),
& \text{if }
\frac{\pi}{4}\left(9\mu^2-4\right)> | 2-3\lambda|>\frac{\pi}{2}(3\mu-2).
\end{cases}
\]
\end{proof}  
\begin{note}
  The classical Fekete--Szeg\"o functional with $\lambda=1$ coincides with the second Hankel determinant of order one, namely $|H_{2, 1}(f)=|a_3-a_2^2|$. Let $f(z)=z+a_2z^2+a_3z^3+\cdots\in\mathcal{C}_{\arcsin}$, then \beas |H_{2, 1}(f)|\le \frac{1}{3\pi}.\eeas 
   For sharpness we take $f_2$ defined by \eqref{f_2}.
\end{note}


\begin{thebibliography}{99}
	
	\bibitem{Ali_BMSS_2003} R. Ali, Coefficients of the inverse of strongly starlike functions, {\textit{Bull. Malays. Math. Sci. Soc.}}, {\bf{26}}(2003), 63--71.
	
	\bibitem{Ali_Vasudevarao_PAMS_2018} M. F. Ali and V. Allu, On logarithmic coefficients of some close-to-convex functions, {\textit{Proc. Amer. Math. Soc.}}, {\bf{146}}(3)(2018), 1131--1142.
	
	\bibitem{Ali_Allu_BAMS} M. F. Ali and V. Allu, Logarithmic coefficients of some close-to-convex functions, {\textit{Bull. Aust. Math. Soc.}}, {\bf{95}}(2017), 228-237.
	
	\bibitem{Alotabi et. al.} A. Alotaibi, M. Arif, M. A. Alghamdi and S. Hussain, Starlikeness associated with cosine hyperbolic function, {\textit{Math.}}, {\bf 8}(2020), 1118.

    \bibitem{DEA_JFT} D. Alimohammadi, E. A. Analouei, T. Bulboac$\check{a}$, and N. E. Cho, Logarithmic coefficient bounds and coefficient conjectures for classes associated with convex functions, {\textit{J. Funct. Space.}}, 2021(1)(2021), P. 6690027.
	
	\bibitem{Allu_Shaji_bams_2025} V. Allu and A. Shaji, Second Hankel determinant for logarithmic inverse coefficients of convex and starlike functions, {\textit{Bull. Aust. Math. Soc.}}, {\bf{111}}(1)(2025), 128--139.

    \bibitem{Arora_BKMS_2022} K. Arora and S. S. Kumar, Starlike functions associated with a petal shaped domain,{\textit{ Bull. Korean Math. Soc.}}, {\bf{59}}(4)(2022), 993--1010.
	
	\bibitem{ASS} A. Banerjee, S. Majumder and S. Panja, Hankel and Toeplitz determinant estimates for starlike functions via Schwarz function techniques,{\textit{ Ukr. Math. J.}}, (Accepted).
	
	\bibitem{Bano} K. Bano, M. Raza, Starlike functions associated with cosine functions, {\textit{Bull. Iran. Math. Soc.}}, {\bf{47}}(2021), 1513--1532.
	 
	\bibitem{CKKLS_	RACSAM_2020} N. E. Cho, B. Kowalczyk, O. S. Kwon, A. Lecko and Y. J. Sim, On the third logarithmic coefficient in some subclasses of close-to-convex functions, {\textit{Rev. R. Acad. Cienc. Exactas F´ıs.
	Nat. Ser. A Mat. RACSAM}}, {\bf{114}}(2)(2020), Paper No. 52, pp-14.

    \bibitem{cho} N. E. Cho, V. Kumar, S. S. Kumar, V. Ravichandran, Radius problems for starlike functions associated with the sine function, {\textit{Bull. Iran. Math. Soc.}}, {\bf{45}}(2019), 213--232.
	
    \bibitem{CKL1} N. E. Cho, B. Kowalczyk and A. Lecko, Sharp bounds of some coefficient functionals over the class of functions convex in the direction of the imaginary axis, {\textit{Bull. Aust. Math. Soc.}}, {\bf{100}}(2019), 86--96.
	
	\bibitem{CKS1} J. H. Choi, Y. C. Kim and T. Sugawa, A general approach to the Fekete-Szeg\"{o} problem, {\textit{J. Math. Soc. Japan}}, {\bf{59}}(2007), 707--727.
	
	\bibitem{PL_D_1983} P. L. Duren, Univalent Functions, Springer-Verlag, New York, 1983.
	
	\bibitem{Eker et al} S. S. Eker, A. Lecko, B. Ceki and B.Seker, The second Hankel determinant of logarithmic coefficients for strongly Ozaki close-to-convex functions,
    {\textit{Bull. Malays. Math. Sci. Soc.}}, {\bf 46}(6)(2023), 183.
	
	\bibitem{Janowski_Anpolon_1970} W. Janowski, Extremal problems for a family of functions with positive real part and for some related families, {\textit{Ann. Polon. Math.}}, {\bf{23}}(1970/71), 159--177.
	
%	\bibitem{Jantang et al} A. Janteng, S. A. Halim, and M. Darus, Hankel determinant for starlike and convex functions, {\textit{Int. J. Math. Anal.}}, {\bf 1}(13)(2007), 619–625.
	
	\bibitem{Krgar_JA_2019} R. Kargar, On logarithmic coefficients of certain starlike functions related to the vertical strip, {\textit{J. Anal.}}, {\bf{27}}(2019), 985--995.
	
	\bibitem{KL_BAMS_2022} B. Kowalczyk and A. Lecko, Second Hankel determinant of logarithmic coefficients of convex and starlike functions, {\textit{Bull. Aus. Math. Soc.}}, {\bf{105}}(3)(2022), 458--467.
	
	\bibitem{KL_RACSAM_2023} B. Kowalczyk and A. Lecko, The second Hankel determinant of the logarithmic coefficients of strongly starlike and strongly convex functions, {\textit{Rev. R. Acad. Cienc. Exactas F´ıs. Nat. Ser. A Mat. RACSAM}}, {\bf{117}}(2)(2023), Paper No. 91.
	
	\bibitem{Kumar_AMP_2021} S. S. Kumar and G. Kamaljeet, A cardioid domain and starlike functions, {\textit{Anal. Math. Phys.}}, {\bf{11}}(2)(2021), pp-34.
	
	
	\bibitem{Kumar_Yadav_IJS_2026} S. S. Kumar and P. Yadav, On a Class of Starlike Functions Associated with a Bean Shaped domain, {\textit{Iran. J. Sci.}}, {\bf{50}}(2026), 197--211.

    \bibitem{Lecko_BSM_2024} A. Lecko and D. Partyka, A generalized Fekete-Szegö functional and initial successive coefficients of univalent functions, {\textit{ Bull. Sci. Math.}}, {\bf 197}(2024), 103527.
	
	\bibitem{Lecko et al} A. Lecko and B. Smiarowska, Sharp inequalities for Zalcman functional of logarithmic coefficients of inverse functions in certain classes of analytic functions, {\textit{J. Math. Inequal.}}, {\bf19}(1)(2025), 81--97. 

	\bibitem{Lowner_annalen_1923} K. L\"{o}wner, Untersuchungen \"{u}ber schlichte konforme Abbildungen des Einheitskreises. I, {\textit{Math. Annalen}}, {\bf{89}}(1)(1923), 103--121.
	
	\bibitem{Ma} W. Ma, Generalized Zalcman conjecture for starlike and typically real functions,{\textit{ J. Math. Anal. Appl.}}, {\bf{234}}(1999), 328--339.
	
	\bibitem{Ma+Minda_1992} W. C. Ma and D. Minda, A unified treatment of some special classes of univalent functions,{\textit{Proceedings of the Conference on Complex Analysis}} (Tianjin, 1992), pp. 157–169, Conf.
	Proc. Lecture Notes Anal., Vol. I, International Press, Cambridge, MA, 1994.
	
	\bibitem{Mwndiratta et al_BMMS_2015} R. Mendiratta, S. Nagpal, and V. Ravichandran, On a subclass of strongly starlike functions associated with exponentia function, {\textit{Bull. Malays. Math. Sci. Soc.}}, {\bf{38}}(1)(2015), 365--386.
	
%	\bibitem{Milin_1971} I. M. Milin, Univalent Functions and Orthonormal Systems, {\textit{Transl. Math. Monogr.}}, Vol. {\bf{49}}, AMS, 1977.
	
	\bibitem{MK_BIMS_2023}  M. Mundalia and S. S. Kumar, Coefficient problems for certain close-to-convex functions, {\textit{Bull. Iran. Math. Soc.}} {\bf{49}}(1)(2023), Paper No. 5.
	
	\bibitem{Majumder et al._JA_2026} S. Majumder, D. Pramanik and N. Sarkar, The second Hankel determinant for logarithmic coefficients of inverse convex functions of a given order, {J. Anal.}, {\bf{34}}(2026), 365--388.
	
	\bibitem{PSW_RM} S. Ponnusamy, N. L. Sharma and K. J. Wirths, Logarithmic Coefficients of the Inverse of Univalent Functions, {\textit{Result. Math.}}, {\bf{73}}(2018), 160.
	
	\bibitem{Ponnusamy_BSM_2021} S. Ponnusamy and T. Sugawa, Sharp inequalities for logarithmic coefficients and their applications, {\textit{Bull. Sci. Math.,}} {\bf{166}}(2021), 102931.
	
	\bibitem{Ponnusamy_JAMS_2020} S. Ponnusamy, N. L. Sharma, and K. J. Wirths, Logarithmic coefficients problems in families related to starlike and convex functions, {\textit{J. Aust. Math. Soc.}}, {\bf{109}}(2020), 230--249.
	
	\bibitem{Pom_1967} C. Pommerenke, On the Hankel determinants of univalent functions, {\textit{Math.}}, {\bf 14}(1)(1976), 108--112.
	
%	\bibitem{Raza et. al} M. Raza, A. Riaz, D. K. Thomas and P. Zaprawa, Third Hankel determinant for starlike and convex functions associated with the exponential function, {\textit{Bol. Soc. Mat. Mex.}}, {\bf{31}}(1)(2025), 16.
	
	\bibitem{Ronning_AUMC_1991} F. Rønning, On starlike functions associated with parabolic regions, {\textit{Ann. Univ. Mariae Curie-Sk lodowska Sect.}},  {\bf{45}}(1991), 117--122.
	
	\bibitem{Sabir_Ali_2025} P. O. Sabir and A. A. Ali, Toeplitz and Hankel determinants of logarithmic coefficients for $r$-valent $q$-starlike and $r$-valent $q$-convex functions, {\textit{MethodsX}}, {\bf{15}}(2025), 103463.
	
	\bibitem{Sim_Thomas_Symmetry_2020} Y. J. Sim and D. K. Thomas, On the difference of inverse coefficients of univalent functions, {\textit{Symmetry}}, {\bf{12}}(12)(2020).
	
	\bibitem{Thomas_PAMS_2016} D. Thomas, On the logarithmic coefficients of close-to-convex functions, {\textit{Proc. Amer. Math. Soc.}}, {\bf{144}}(4)(2016), 1681--1687.
	
	\bibitem{Zaprawa_BSMM_2021} P. Zaprawa, Initial logarithmic coefficients for functions starlike with respect to symmetric points, {\textit{Bol. Soc. Mat. Mex.}}, {\bf{27}}(3)(2021), Paper No. 62, pp-13.
	
	
	
	
	
	
	
	
	
	
	
	

	

	

	

	
	
	
	
	
	
	
	
	
	
	
\end{thebibliography}
\end{document}